\newtheorem{thm}{Theorem}[section]
\newtheorem{cor}[thm]{Corollary}
\newtheorem{lem}[thm]{Lemma}
\newtheorem{prop}[thm]{Proposition}
\theoremstyle{definition}
\newtheorem{defn}[thm]{Definition}
\newtheorem{example}[thm]{Example}
\theoremstyle{remark}
\newtheorem{rem}[thm]{Remark}
\numberwithin{equation}{section}
\def\Cb{{\mathbb C}}
\def\Rb{{\mathbb R}}
\def\Zb{{\mathbb Z}}
\def\Ac{{\mathcal A}}
\def\Cc{{\mathcal C}}
\def\Fc{{\mathcal F}}
\def\Gc{{\mathcal G}}
\def\Hc{{\mathcal H}}
\def\Kc{{\mathcal K}}
\def\Nc{{\mathcal N}}
\newcommand{\mto}[1]{\stackrel{#1}{\rightarrow}}
\newcommand{\mTo}[1]{\stackrel{#1}{\longrightarrow}}
\newcommand{\To}{\longrightarrow}
\newcommand{\hf}{\mathfrak{h}} % for Hilbert space
\newcommand{\Uf}{\mathfrak{U}}  % for a cover
\newcommand{\puh}{\mathbb{P} U (\mathfrak{h})}
\newcommand{\lra}{\longrightarrow}
\newcommand{\arrows}{\rightrightarrows} %groupoid arrows
\newcommand{\Xc}{\mathcal{X}}
\newcommand{\isom}{\simeq}
\newcommand{\Uone}{{ U}(1)}
\newcommand{\Morita}{\stackrel{Morita}{\sim}}
\DeclareMathOperator{\Hom}{Hom}
\DeclareMathOperator{\ev}{ev}   % evaluation
\DeclareMathOperator{\tot}{tot} % for the total complex of a double
\DeclareMathOperator{\Aut}{Aut}
\DeclareMathOperator{\Prin}{Prin}
\DeclareMathOperator{\ad}{ad}
\DeclareMathOperator{\Br}{Br}
\DeclareMathOperator{\opp}{op}
\begin{document}

\title{A groupoid approach to noncommutative T-duality}
\author{Calder Daenzer}
\thanks{The research reported here was
supported in part by National Science Foundation grants DMS-0703718 and DMS-0611653.}
\address{970 Evans Hall, University of California at Berkeley, Berkeley, CA 94720-3840}%
\email{cdaenzer@math.berkeley.edu}
% ----------------------------------------------------------------

\maketitle
%-----------------------------------------------------------------------------
\begin{abstract}%\checkmark
Topological T-duality is a transformation taking a gerbe on a principal torus bundle to a gerbe on a principal dual-torus bundle.  We give a new geometric construction of T-dualization, which allows the duality to be extended in following two directions.  First, bundles of groups other than tori, even bundles of some nonabelian groups, can be dualized.  Second, bundles whose duals are families of noncommutative groups (in the sense of noncommutative geometry) can be treated, though in this case the base space of the bundles is best viewed as a topological stack.  Some methods developed for the construction may be of independent interest.  These are a Pontryagin type duality that interchanges commutative principal bundles with gerbes, a nonabelian Takai type duality for groupoids, and the computation of certain equivariant Brauer groups.
\end{abstract}
\tableofcontents
\newpage
%--------------------------------------------------------
\section{Introduction}%\checkmark
A principal torus bundle with $\Uone$-gerbe and a principal dual-torus bundle\footnote{ If a torus is written $V/\Lambda$, where $V$ is a real vector space and $\Lambda$ a full rank lattice, then its dual is the torus $\widehat{\Lambda}:=\Hom(\Lambda,U\!(1))$.}
with $\Uone$-gerbe are said to be topologically T-dual when there is an isomorphism between the twisted $K$-theory groups of the two bundles, where the ``twisting'' of the $K$-groups is determined by the gerbes on the two bundles.

The original motivation for the study of T-duality comes from theoretical physics, where it describes several phenomena and is by now a fundamental concept.  For example T-duality provides a duality between type IIa and type IIb string theory and a duality on type I string theory (see e.g. \cite{Pol}), and it provides an interpretation of a certain sector of mirror symmetry on Calabi-Yau manifolds (see \cite{SYZ}).

There have been several approaches to constructing T-dual pairs, each with their particular successes.  For example Bunke, Rumpf and Schick (\cite{BRS}) have given a description using algebraic topology methods which realizes the duality functorially.  This method is very successful for cases in which T-duals exist as commutative spaces, and has recently been extended (\cite{BSST}) to abelian groups other than tori.  In complex algebraic geometry, T-duality is effected by the Fourier-Mukai transform; in that context duals to certain toric fibrations with singular fibers (so they are not principal bundles) can be constructed (e.g. \cite{DP},\cite{BBP}).  Mathai and Rosenberg have constructed T-dual pairs using $C^*$-algebra methods, and with these methods arrived at the remarkable discovery that in certain situations one side of the duality must be a family of noncommutative tori (\cite{MR}).

In this paper we propose yet another construction of T-dual pairs, which can be thought of as a construction of the geometric duality that underlies the $C^*$-algebra duality of the Mathai-Rosenberg approach.  To validate the introduction of yet another T-duality construction, let us immediately list some of the new results which it affords.  Any of the following language which is not standard will be reviewed in the body of the paper.
\begin{itemize}
\item Duality for groups other than tori can be treated, even groups which are not abelian.  More precisely, if $N$ is a closed normal subgroup of a Lie group $G$, then the dual of any $G/N$-bundle $P\to X$ with $\Uone$-gerbe can be constructed as long as the gerbe is ``equivariant'' with respect to the translation action of $G$ on the $G/N$-bundle.  The dual is found to be an $N$-gerbe over $X$, with a $\Uone$-gerbe on it.  The precise sense in which it is a duality is given by what we call nonabelian Takai duality for groupoids, which essentially gives a way of returning from the dual side to something canonically Morita equivalent to its predual.  A twisted $K$-theory isomorphism is not necessary for there to be a nonabelian Takai duality between the two objects, though we show that there is nonetheless a $K$-isomorphism whenever $G$ is a simply connected solvable Lie group.

\item Duality can be treated for torus bundles (or more generally $G/N$ bundles, as above) whose base is a topological stack rather than a topological space.  Such a generalization is found to be crucial for the understanding of duals which are noncommutative (in the sense of noncommutative geometry).

\item We find new structure in noncommutative T-duals.  For example in the case of principal $T$-bundles, where $T=V/\Lambda\isom\Rb^n/\Zb^n$ is a torus, we find that a noncommutative T-dual is in fact a deformation of a $\Lambda$-gerbe over the base space $X$.  The $\Lambda$-gerbe will be given explicitly, as will be the 2-cocycle giving the deformation, and when we restrict the $\Lambda$-gerbe to a point $m\in X$, so that we are looking at what in the classical case would be a single dual-torus fiber, the (deformed) $\Lambda$-gerbe is presented by a groupoid with twisting 2-cocycle, whose associated twisted groupoid algebra is a noncommutative torus.  Thus the twisted $C^*$-algebra corresponding to a groupoid presentation of the deformed $\Lambda$-gerbe is a family of noncommutative tori, which matches the result of the Mathai-Rosenberg approach, but we now have an understanding of the ``global'' structure of this object, which one might say is that of a $\Lambda$-gerbe fibred in noncommutative dual tori.  Another benefit to our setup is that a cohomological classification of noncommutative duals is available, given by groupoid (or stack) cohomology.

\item  Groupoid presentations are compatible with extra geometric structure such as smooth, complex or symplectic structure.  This will allow, in particular, for the connection between topological T-duality and the complex T-duality of \cite{DP} and \cite{BBP} to be made precise.  We will begin an investigation of this and possible applications to noncommutative homological mirror symmetry in a forthcoming paper with Jonathan Block \cite{BD}.
\end{itemize}

Let us now give a brief outline of our T-dualization construction.  For the outline to be intelligible, the reader should be familiar with groupoids and gerbes or else should browse Sections \eqref{S:groupoidsAndGgroupoids}-\eqref{S:Examples} and \eqref{S:gerbes}.

Let $N$ be a closed normal subgroup of a locally compact group $G$, let $P\to X$ be a principal $G/N$-bundle over a space $X$, and suppose we are given a \v{C}ech 2-cocycle $\sigma$ on $P$ with coefficients in the sheaf of $\Uone$-valued functions.  It is a classic fact that $\sigma$ determines a $\Uone$-gerbe on $P$, and that such gerbes are classified by the \v{C}ech cohomology class of $\sigma$, written $[\sigma]\in\check{H}^2(P;\Uone)$.  So $\sigma$ represents the gerbe data.  (The case which has been studied in the past is $G\simeq\Rb^n$ and $N\simeq\Zb^n$, so that $P$ is a torus bundle.)
Given this data $(P,[\sigma])$ of a principal $G/N$-bundle $P$ with $\Uone$-gerbe, we construct a T-dual according to the following prescription:
\begin{enumerate}
\item  Choose a lift of $[\sigma]\in\check{H}^2(P;\Uone)$ to a 2-cocycle $[\tilde{\sigma}]\in\check{H}^2_G(P;\Uone)$ in $G$-equivariant \v{C}ech cohomology (see Section \eqref{S:equivariantcoho}).  If no lift exists there is no T-dual in our framework.
% (nor in any other framework except possibly that of \cite{BHM}).
\item From the lift $\tilde{\sigma}$, define a new gerbe as follows.  By definition, $\tilde{\sigma}$ will be realized as a $G$-equivariant 2-cocycle in the groupoid cohomology of some groupoid presentation $\Gc(P)$ of $P$ (see Example \eqref{ex:bundlegroupoid}).  Because $\tilde{\sigma}$ is $G$-equivariant, it can be interpreted as a 2-cocycle on the crossed product groupoid $G\ltimes\Gc(P)$ for the translation action of $G$ on $\Gc(P)$ (see Example \eqref{ex:transformationgroupoid}).  Thus $\tilde{\sigma}$ determines a $\Uone$-gerbe on the \emph{groupoid} $G\ltimes\Gc(P)$.
\item  The crossed product groupoid $G\ltimes\Gc(P)$ is shown to present an $N$-gerbe over $X$, so $\tilde{\sigma}$ is interpreted as the data for a $U(1)$-gerbe on this $N$-gerbe.  This $\Uone$-gerbe on an $N$-gerbe can be viewed as the T-dual (there will be ample motivation for this).  We construct a canonical induction procedure, nonabelian Takai duality (see Section \eqref{S:nonabTakai}), that recovers the data $(P,\tilde{\sigma})$ from this $T$-dual.
\item In the special case that $N$ is abelian and $\tilde{\sigma}$ has a vanishing ``Mackey obstruction'', we construct a ``Pontryagin dual'' of the $\Uone$-gerbe over the $N$-gerbe of Step (3).  This dual object is a principal $G/N^{dual}$-bundle with $\Uone$-gerbe, where $G/N^{dual}\equiv\widehat{N}:=\Hom(N;\Uone)$ is the Pontryagin dual\footnote{For example when $N\simeq\Zb^n$, $\widehat{N}$ is the $n$-torus which is (by definition) dual to $\Rb^n/\Zb^n$.}.  Thus in this special case we arrive at a classical T-dual, which is a principal $G/N^{dual}$-bundle with $\Uone$-gerbe, and the other cases in which we cannot proceed past Step (3) are interpreted as noncommutative and nonabelian versions of classical T-duality.
\end{enumerate}
The above steps are Morita invariant in the appropriate sense and can be translated into statements about stacks.  Furthermore, they produce a unique dual object (up to Morita equivalence or isomorphism) once a lift $\tilde{\sigma}$ has been chosen.  It should be noted, however, that neither uniqueness nor existence are intrinsic feature of a T-dualization whose input data is only $(P,[\sigma])$.  In fact, the different possible T-duals are parameterized by the fiber over $[\sigma]$ of the forgetting map $\check{H}^2_G(P;\Uone)\to\check{H}^2(P;\Uone)$, which is in general neither injective nor surjective.  In some cases the forgetful map is injective.  For example this is true  1-dimensional tori, and consequently T-duals of gerbes over principle circle bundles are unique.

At the core of our construction is the concept of dualizing by taking a crossed product for a group action.  This concept was first applied by Jonathan Rosenberg and Mathai Varghese, albeit in a quite different setting than ours.  We have included an appendix which makes precise the connection between our approach and the approach presented in their paper \cite{MR}.  The role of Pontryagin duality in T-duality may have been first noticed by Arinkin and Beilinson, and some notes to this effect can be found in Arinkin's appendix in \cite{DP}, (though this is in the very different setting of complex T-duality).  The idea from Arinkin's appendix has recently been expanded upon in the topological setting in \cite{BSST}.  Our version of Pontryagin duality almost certainly coincides with these, though we arrived at it from a somewhat different perspective.
\ \newline\newline
\noindent\textbf{Acknowledgements. } I would like to thank Oren Ben-Bassat, Tony Pantev, Michael Pimsner, Jonathan Rosenberg, Jim Stasheff, and most of all Jonathan Block, for advice and helpful discussions.  I am also grateful to the Institut Henri Poincar\'e, which provided a stimulating environment for some of this research.

\section{Groupoids and G-groupoids}\label{S:groupoidsAndGgroupoids}%\checkmark
Let us fix notation and conventions for groupoids.  A set theoretic groupoid is a small category $\Gc$ with all arrows invertible, written as follows:
\[ \Gc:=(\Gc_1\stackrel{s,r}{\arrows}\Gc_0). \]
Here $\Gc_1$ is the set of arrows, $\Gc_0$ is the set of units (or objects), $s$ is the source map, and $r$ is the range map.  The $n$-tuples of composable arrows will be denoted $\Gc_n$.  Throughout the paper $\gamma's$ will be used to denote arrows in a groupoid unless otherwise noted.

A topological groupoid is one whose arrows $\Gc_1$ and objects $\Gc_0$ are topological spaces and for which the structure maps (source, range, multiplication, and inversion) are continuous.

A left Haar system on a groupoid (see \cite{Ren}) is, roughly speaking, a continuous family of measures on the range fibers of the groupoid that is invariant under left groupoid multiplication.  It is shown in \cite{Ren} that for every groupoid admitting a left Haar system, the source and range maps are open maps.

In this paper, \textbf{a groupoid will mean a topological groupoid whose space of arrows is locally compact Hausdorff.  Also each groupoid will be implicitly equipped with a left Haar system}.  These extra conditions are needed so that groupoid $C^*$-algebras can be defined. \textbf{Furthermore, all groupoids will be assumed second countable}, that is, the space of arrows will be assumed second countable.  Second countability of a groupoid ensures that the groupoid algebra is well behaved.  For example, second countability implies that the groupoid algebra is separable and thus well-suited for $K$-theory; second countability is invoked in \cite{Ren} when showing that every representation of a groupoid algebra comes from a representation of the groupoid \cite{Ren}; and the condition is used in \cite{MRW} when showing that Morita equivalence of groupoids implies strong Morita equivalence of the associated groupoid algebras.

On the other hand, several results presented here do not involve groupoid algebras in any way.  It will hopefully be clear in these situations that the Haar measure and second countability hypotheses, and in some cases local compactness, are unnecessary.

Topological groups and topological spaces are groupoids, and they will be assumed here to satisfy the same implicit hypotheses as groupoids.  Thus spaces and groups are always second countable, locally compact Hausdorff, and equipped with a left Haar system of measures.

A group $G$ can act on a groupoid, forming what is called a $G$-groupoid.
\begin{defn} A \textbf{(left) $G$-groupoid} is a groupoid $\Gc$ with a (continuous) left $G$-action on its space of arrows that commutes with all structure maps and whose Haar system is left $G$-invariant.
\end{defn}
\section{Modules and Morita equivalence for groupoids}\label{S:groupoidmoritaeq}%\checkmark
Let $\Gc$ be a groupoid.  A \textbf{left $\Gc$-module} is a space $P$ with a continuous map $P\stackrel{\varepsilon}{\to}\Gc_0$ called the \textbf{moment map} and a continuous ``action''
\[ \Gc\times_{\Gc_0}P\to P;\quad (\gamma,p)\mapsto\gamma p. \]
Here $\Gc_1\times_{\Gc_0}P:=\{\ (\gamma,p)\ |\ s\gamma=\varepsilon p\ \}$ is the fibred product and by ``action'' we mean that $\gamma_1(\gamma_2p)=(\gamma_1\gamma_2)p$.

A right module is defined similarly, and one can convert a left module $P$ to a right module $P^{\opp}$ by setting
\[ p\cdot\gamma:=\gamma^{-1}p;\quad \gamma\in\Gc\ ,\ p\in P^{\opp}.  \]
A $\Gc$-action is called \textbf{free} if $(\gamma p=\gamma' p)\Rightarrow(\gamma=\gamma')$ and is called \textbf{proper} if the map
\[ \Gc\times_{\Gc_0}P\to P\times P;\quad (\gamma,p)\mapsto(\gamma p,p)  \]
is proper.  A $\Gc$-module is called \textbf{principal} if the $\Gc$ action is both free and proper, and is called \textbf{locally trivial} when the quotient map $P\to\Gc\backslash P$ admits local sections.

Note that when $\Gc=(G\arrows *)$ is a group, a locally trivial principal $\Gc$-module $P$ is exactly a principal $G$-bundle over the quotient space $G\backslash P$.  For this reason principal modules are sometimes called principal bundles.  We are reserving the term principal bundle for something else (see Example \eqref{ex:bundlegroupoid}).

Now we come to the important notion of \emph{groupoid Morita equivalence}.
\begin{defn}\label{D:groupoidME} Two groupoids $\Gc$ and $\Hc$ are said to be \textbf{Morita equivalent} when there exists a \textbf{Morita equivalence $(\Gc$-$\Hc)$-bimodule}.  This is a space $P$ with commuting left $\Gc$-module and right $\Hc$-module structures that are both principal, and satisfying the following extra conditions.
\begin{itemize}
\item The quotient space $\Gc\backslash P$ (with its quotient topology) is homeomorphic to $\Hc_0$ in a way that identifies the right moment map $P\to\Hc_0$ with the quotient map $P\to\Gc\backslash P$.
\item The quotient space $P/\Hc$ (with its quotient topology) is homeomorphic to $\Gc_0$ in a way that identifies the left moment map $P\to\Gc_0$ with the quotient map $P\to P/\Hc$.
\end{itemize}
\end{defn}
In the literature on groupoids one finds several other ways to express Morita equivalence, but they are all equivalent (see for example \cite{BX}).

Morita equivalence bimodules give rise to equivalences of module categories.  To see this, let $E$ be a right $\Gc$-module and $P$ a Morita $(\Gc$-$\Hc)$-bimodule.  Then $\Gc$ acts on $E\times_{\Gc_0}P$ by
\[ \gamma\cdot(e,p):=(e\gamma^{-1},\gamma p) \]
and one checks that the right $\Hc$-module structure on $P$ induces one on $E*P:=\Gc\backslash(E\times_{\Gc_0}P)$.  The assignment $E\to E*P$ induces the desired equivalence of module categories.  The inverse is given by $P^{\opp}$; in fact the properties of Morita bimodules ensure an isomorphism of $(\Gc$-$\Gc$)-bimodules, $P*P^{\opp}\isom\Gc$, and this in turn induces an isomorphism $((E*P)*P^{\opp})\simeq E$.  If $E$ is principal then so is $E*P$, and if furthermore, both $E$ and $P$ are locally trivial, then so is $E*P$.

There is also a notion of \textbf{G-equivariant Morita equivalence} of $G$-groupoids.  This is given by a Morita equivalence $(\Gc$-$\Hc)$-bimodule $P$ with compatible $G$-action. The compatibility is expressed by saying that the map
\[ \Gc\times_{\Gc_0}P\times_{\Hc_0}\Hc\to P;\quad (\gamma,p,\eta)\mapsto\gamma p\eta \]
satisfies, for $g\in G$,
\begin{equation}\label{eq:GmoritaEquiv}
g(\gamma p\eta)=g(\gamma) g(p)g(\eta).
\end{equation}

\section{Some relevant examples of groupoids and Morita equivalences}\label{S:Examples}%\checkmark
Here are some groupoids and Morita equivalences which will be used throughout the paper.

\begin{example}\label{ex:cechgroupoid} \textbf{\v{C}ech groupoids and refinement}.  If $\Uf:=\{U_i\}_{i\in I}$ is an open cover of a topological space X then the \v{C}ech groupoid of the cover, which we denote $\Gc_{\Uf}$, is defined as follows:
\[ \Gc_{\Uf}:=(\coprod_{I\times I} U_{ij}\arrows\coprod_I U_i)\qquad
\begin{cases}
s:U_{ij}\hookrightarrow U_j \\
r:U_{ij}\hookrightarrow U_i \\
\end{cases}  \]
This groupoid is Morita equivalent to the unit groupoid $X\arrows X$.  Indeed, $\Gc_0$ is a Morita equivalence bimodule.  It is a right $\Gc$ module in the obvious way.  As for the left $(X\arrows X)$-module structure, the moment map $\Gc_0\to X$ is ``glue the cover together'' and the $X$-action is the trivial one $X\times_X\Gc_0\to\Gc_0$.

More generally, let $\Gc$ be any groupoid and suppose $\Uf:=\{U_i\}_{i\in I}$ is a locally finite cover of $\Gc_0$.  Define a new groupoid
\[ \Gc_{\Uf}:=(\coprod_{I\times I}\Gc^{ij}\arrows\coprod_I U_i)\text{ where }\Gc^{ij}:=r^{-1}U_i\cap s^{-1}U_j \]
The source and range maps are $\coprod s^{ij}$ and $\coprod r^{ij},$ where
$\begin{cases}
s^{ij}:\Gc^{ij}\hookrightarrow s^{-1}U_j\stackrel{s}{\to}U_j \\
r^{ij}:\Gc^{ij}\hookrightarrow r^{-1}U_j\stackrel{r}{\to}U_i.\\
\end{cases}$
We will call such a groupoid a \textbf{refinement} of $\Gc$ and write $\gamma^{ij}$ for $\gamma\in \Gc^{ij}$.  A groupoid is always Morita equivalent to its refinements.  A Morita equivalence $(\Gc$-$\Gc_{\Uf})$-bimodule $P$ is defined as follows:
\[ P:=\coprod_I s^{-1}U_i. \]
For a left $\Gc$-module structure on $P$, let the moment map be $r:P\to\Gc_0$ and, writing $\eta^i$ for $\eta\in s^{-1}U_i\subset P$, define the action by
\[ (\gamma,\eta^i)\mapsto (\gamma\eta)^i\qquad\gamma\in\Gc,\ \eta^i\in P. \]
For the right $\Gc_{\Uf}$-module structure, the moment map is $\eta^i\mapsto s\eta\in U_i$ and the right action is $(\eta^i,\gamma^{ij})\mapsto(\eta\gamma)^j$.

Of course a \v{C}ech groupoid is exactly a refinement of a unit groupoid.  In order to keep within our class of second countable groupoids, we restrict to countable covers of $\Gc_0$.
\end{example}
\begin{example}\label{ex:transformationgroupoid} \textbf{Crossed product groupoids.} From a $G$-groupoid $\Gc$ one can form the crossed product groupoid
\[ G\ltimes\Gc:=(G\times\Gc_1\arrows\Gc_0) \]
whose source and range maps are $s(g,\gamma):=s(g^{-1}\gamma)$ and \ $r(g,\gamma):=r\gamma$, and for which a composed pair looks like:
\[ (g,\gamma)\circ(g',g^{-1}\gamma')=(gg',\gamma\gamma'). \]

Now suppose two $G$-groupoids $\Gc$ and $\Hc$ are equivariantly Morita equivalent via a bimodule $P$ with moment maps $b_\ell:P\to\Gc_0$ and $b_r:P\to\Hc_0$.  Then $G\times P$ has the structure of a Morita $(G\ltimes\Gc)$-$(G\ltimes\Hc)$-bimodule.  The left $G\ltimes\Gc$ action is
\[ (g,\gamma)\cdot (g',p):=(gg',\gamma gp)\qquad (g,\gamma)\in G\ltimes\Gc,\ (g',p)\in G\times P \]
with moment map $(g',p)\mapsto b_\ell(p)$.  The right $G\ltimes\Hc$-module structure is
\[ (g',p)\cdot(g'',\eta):= (g'g'',pg'\eta)\qquad (g'',\eta)\in G\ltimes\Hc \]
with moment map $(g',p)\mapsto b_r(g'^{-1}p)$.
\end{example}
\begin{example}\label{ex:bundlegroupoid}\textbf{Generalized principal bundles.} Let $\Gc$ be a groupoid, $G$ a locally compact group, and $\rho:\Gc\to G$ a homomorphism of groupoids.  The generalized principal bundle associated to $\rho$ is the groupoid
\[ G\rtimes_\rho\Gc:=(G\times\Gc_1\arrows G\times\Gc_0) \]
whose source and range maps are
\[ s:(g,\gamma)\mapsto (g\rho(\gamma),s\gamma)\quad\text{and}\quad r:(g,\gamma)\mapsto(g,r\gamma) \]
and for which a composed pair looks like
\[ (g,\gamma_1)\circ(g\rho(\gamma_1),\gamma_2)=(g,\gamma_1\gamma_2). \]

The reason $G\rtimes_\rho\Gc$ is called a generalized principal bundle is that when $\Gc=\check{\Gc}\{U_i\}$ is the \v{C}ech groupoid of Example \eqref{ex:cechgroupoid},  $G\rtimes_\rho\Gc$ is $G$-equivariantly Morita equivalent to a principal bundle on $X$. Indeed, in this case $\rho$ is the same thing as a $G$-valued \v{C}ech 1-cochain on the cover, and the homomorphism property $\rho(\gamma_1\gamma_2)=\rho(\gamma_1)\rho(\gamma_2)$ translates to $\rho$ being closed. Thus $\rho$ gives transition functions for the principal $G$-bundle on $X$
\[P(\rho):= \coprod G\times U_\alpha/\sim \qquad (g,u\in U_\alpha)\sim(g\rho(\gamma),u\in U_\beta),  \]
where $\gamma=u\in U_{\alpha\beta}\subset\Gc$.  Let $\pi$ denote the bundle map $P(\rho)\to X$, then there are isomorphisms
\[ h_\alpha:G\times U_\alpha\lra \pi^{-1}U_\alpha \]
which satisfy $h_\beta^{-1}h_\alpha(g,u)=(g\rho(\gamma),u)$, and the maps $h_\alpha$ give a $G$-equivariant isomorphism of groupoids between $G\rtimes_\rho\Gc$ and the \v{C}ech groupoid $\check{\Gc}(\{\pi^{-1}U_\alpha\})$ by sending $(g,\gamma)\in G\times U_{\alpha,\beta}\subset G\rtimes_\rho\Gc$ to $h_\alpha(g,\gamma)\in\pi^{-1}U_{\alpha\beta}\subset\check{\Gc}(\{\pi^{-1}U_\alpha\})$.  Finally, $\check{\Gc}(\{\pi^{-1}U_\alpha\})$ is $G$-equivariantly Morita equivalent to the unit groupoid $P(\rho)\arrows P(\rho).$

To see the importance of keeping track of $G$-equivariance, note for instance that when $G$ is abelian $G\rtimes_\rho\Gc$ and $G\rtimes_{\rho^{-1}}\Gc$ are isomorphic (and therefore Morita equivalent), whereas these two groupoids with their natural $G$-groupoid structures are not equivariantly equivalent.
\end{example}
\begin{example}\label{ex:imprimitivity}\textbf{Isotropy subgroups.} Let $G$ be a locally compact group and $N$ a closed subgroup.  Then $G$ acts on the homogeneous space $G/N$ by left translation and one can form the crossed product groupoid $G\ltimes G/N\arrows G/N$.  There is a Morita equivalence
\[ (G\ltimes G/N\arrows G/N) \sim (N\arrows *).\]
The bimodule implementing the equivalence is $G$, with $N$ acting on the right by translation and $G\ltimes G/N$ acting on the left by $(g,ghN)\cdot h:=gh$.
\end{example}
\begin{example}\label{ex:NCgerbe}\textbf{Nonabelian groupoid extensions. }  Let $\Gc$ be a groupoid and $B\to\Gc_0$ a bundle of not necessarily abelian groups over $\Gc_0$.  Suppose we have two continuous functions
\[ \Gc_2\times_{\Gc_0}B\to B\ ;\ \ (\gamma_1,\gamma_2,p)\mapsto \sigma(\gamma_1,\gamma_2)p\ \ \text{ and } \]
\[ \Gc\times_{\Gc_0}B\to B\ ;\ \  (\gamma,p)\mapsto\tau(\gamma)(p), \]
such that $\sigma(\gamma_1,\gamma_2)$ is an element of the fiber of $B$ over $r\gamma_1$, $\tau(\gamma)$ is an isomorphism from the fiber over $s\gamma$ to the fiber over $r\gamma$, and the following equations are satisfied:
\begin{align}\label{eq:ncGerbe}
&\tau(\gamma_1)\circ\tau(\gamma_2)=\ad(\sigma(\gamma_1,\gamma_2))\circ\tau(\gamma_1\gamma_2) \\
&(\tau(\gamma_1)\circ\sigma(\gamma_2,\gamma_3))\sigma(\gamma_1,\gamma_2\gamma_3)=\sigma(\gamma_1,\gamma_2)\sigma(\gamma_1\gamma_2,\gamma_3)
\end{align}
where $\ad(p)(q):=pqp^{-1}$ for elements $p,q\in B$ that both lie in the same fiber over $\Gc_0$.  We will write $\gamma(p):=\tau(\gamma)(p)$.
The pair $(\sigma,\tau)$ can be thought of as a 2-cocycle in ``nonabelian cohomology'' with values in $B$, and when $B$ is a bundle of abelian groups, $\tau$ is simply an action and $\sigma$ a 2-cocycle as in Section \eqref{S:equivariantcoho}.

From the data $(\sigma,\tau)$ we form an \textbf{extension of $\Gc$ by $B$}, which is the groupoid
\[ B\rtimes^\sigma\Gc:=(B\times_{b,\Gc_0,r}\Gc_1\arrows \Gc_0) \]
with source, range and multiplication maps
\begin{enumerate}
\item $s(p,\gamma):=s\gamma\quad r(p,\gamma):=r\gamma$
\item $(p_1,\gamma_1)\circ(p_2,\gamma_2)=(p_1\gamma_1(p_2)\sigma(\gamma_1,\gamma_2),\gamma_1\gamma_2)$
\end{enumerate}
\end{example}
The next example combines Examples \eqref{ex:bundlegroupoid}, \eqref{ex:imprimitivity}, and \eqref{ex:NCgerbe}.
\begin{example}\label{ex:bundleimprimitivity}
Let $\rho:\Gc\to G$ be a continuous function. Define \[\delta\rho(\gamma_1,\gamma_2):=\rho(\gamma_1)\rho(\gamma_2)\rho(\gamma_1\gamma_2)^{-1}\qquad (\gamma_1,\gamma_2)\in\Gc^2.\]
Suppose $\delta\rho$ takes values in a closed normal subgroup $N$ of $G$, and write $\bar{\rho}$ for the composition $\Gc\stackrel{\rho}{\to}G\to G/N$, which is a homomorphism.  Associated to $\rho$ we construct two groupoids.
\begin{enumerate}
\item $(G\ltimes(G/N\rtimes_{\bar{\rho}}\Gc).$  This is the crossed product groupoid of $G$ acting by translation on the generalized principal bundle $G/N\rtimes_{\bar{\rho}}\Gc$.  This means that for $(g,t,\gamma)'s\in(G\ltimes(G/N\rtimes_{\bar{\rho}}\Gc)$, the source, range and multiplication maps are
\begin{enumerate}
\item $s(g,t,\gamma)=(g^{-1}t\rho(\gamma),s\gamma)$
\item $r(g,t,\gamma)=(t,r\gamma)$
\item $(g_1,t,\gamma_1)\circ(g_2,g_1^{-1}t\rho(\gamma_1),\gamma_2)=(g_1g_2,t,\gamma_1\gamma_2)$
\end{enumerate}
\item $(N\rtimes^{\delta\rho}\Gc)$.  In the notation of Example \eqref{eq:ncGerbe} this is the extension determined by the pair $(\sigma,\tau):=(\delta\rho,\ad(\rho))$ and the constant bundle $B:=\Gc_0\times N$.  Note that as an $N$-valued groupoid 2-cocycle $\delta\rho$ is not necessarily a coboundary. Explicitly, the source, range and multiplication are
\begin{enumerate}
\item $s(n,\gamma)=s\gamma$
\item $r(n,\gamma)=r\gamma$
\item $(n_1,\gamma_1)\circ(n_2,\gamma_2)=(n_1\gamma_1(n_2)\delta\rho(\gamma_1,\gamma_2),\gamma_1\gamma_2)$
\end{enumerate}
where $\gamma(n):=\rho(\gamma)n\rho(\gamma)^{-1}.$
\end{enumerate}
\begin{prop}
The two groupoids $G\ltimes(G/N\rtimes_{\bar{\rho}}\Gc)$ and $(N\rtimes^{\delta\rho}\Gc)$ of Example \eqref{ex:bundleimprimitivity} are Morita equivalent.
\end{prop}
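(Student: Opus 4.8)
The plan is to write down an explicit Morita equivalence bimodule $P$ between $G\ltimes(G/N\rtimes_{\bar\rho}\Gc)$ and $N\rtimes^{\delta\rho}\Gc$, modelled on the bimodule $G$ that implements the equivalence $(G\ltimes G/N)\sim(N\rightrightarrows *)$ of Example~\eqref{ex:imprimitivity}. Concretely I would take
\[ P:=G\times_{G/N,\bar\rho,r}\Gc_1=\{\,(g,\gamma)\ |\ gN=\bar\rho(\gamma)^{-1}\cdot *\ \text{-type condition}\,\}, \]
or more simply $P:=G\times\Gc_0$ with an appropriate fibration; the right model is $P=\{(g,\gamma)\in G\times\Gc_1\}$ with left moment map to $(G/N\rtimes_{\bar\rho}\Gc)_0=G/N\times\Gc_0$ given by $(g,\gamma)\mapsto(gN,r\gamma)$ and right moment map to $(N\rtimes^{\delta\rho}\Gc)_0=\Gc_0$ given by $(g,\gamma)\mapsto s\gamma$. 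The left $G\ltimes(G/N\rtimes_{\bar\rho}\Gc)$-action should be $(g',t,\gamma_1)\cdot(g,\gamma_2)=(g'g,\gamma_1\gamma_2)$ whenever the matching condition $tN=g'gN$ (i.e.\ $t=g'g\cdot\bar\rho(\text{stuff})$) holds, and the right $N\rtimes^{\delta\rho}\Gc$-action should be $(g,\gamma_1)\cdot(n,\gamma_2)=(g\,\rho(\gamma_1)\,n\,\rho(\gamma_1\gamma_2)^{-1}\cdot(\text{correction}),\gamma_1\gamma_2)$ — the correction term being exactly $\delta\rho(\gamma_1,\gamma_2)$ bookkeeping — designed so that the right action absorbs $\rho(\gamma)$-twists and the $N$-torsor structure on the $G/N$-fibers is reproduced. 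I would fix the precise formulas by demanding that $P=G$ and the two actions reduce to those of Example~\eqref{ex:imprimitivity} when $\Gc$ is the trivial groupoid $*\rightrightarrows*$ and $\rho$ is trivial.

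The key steps, in order, are: (1) verify that the proposed left and right actions are well defined (the fibered products make sense, continuity, compatibility with the Haar systems) and that they commute; (2) verify both actions are principal, i.e.\ free and proper — freeness is a direct check from the formulas, and properness should follow from properness of the $N$-action on $G$ together with properness of the relevant $\Gc$-actions, using that $N$ is closed in $G$; (3) identify the orbit spaces: show $\bigl(G\ltimes(G/N\rtimes_{\bar\rho}\Gc)\bigr)\backslash P\cong\Gc_0$ compatibly with the right moment map, and $P/\bigl(N\rtimes^{\delta\rho}\Gc\bigr)\cong G/N\times\Gc_0$ compatibly with the left moment map. Step (3) is where the structure of $G/N\rtimes_{\bar\rho}\Gc$ really enters: quotienting $P=G\times\Gc_1$ by the left $G$ and by $\Gc$ collapses $G$ to $G/N$ and $\Gc_1$ to $\Gc_0$, and one must check the gluing matches $\bar\rho$.

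The main obstacle I expect is the bookkeeping in Step (1)–(2) around the cocycle $\delta\rho$: because $\rho$ is only a function and not a homomorphism, the naive right action formula will fail associativity by exactly $\delta\rho(\gamma_1,\gamma_2)$, and getting the signs and placements of $\rho(\gamma)$, $\gamma(n)=\rho(\gamma)n\rho(\gamma)^{-1}$, and $\delta\rho$ correct so that the $N\rtimes^{\delta\rho}\Gc$ multiplication rule $(n_1,\gamma_1)(n_2,\gamma_2)=(n_1\gamma_1(n_2)\delta\rho(\gamma_1,\gamma_2),\gamma_1\gamma_2)$ is respected is delicate. A clean way to organize this is to first treat the case $N=G$ (so $G/N$ is a point and $\bar\rho$ is trivial), where the statement becomes $G\ltimes\Gc\sim N\rtimes^{\delta\rho}\Gc$ with $N=G$ — essentially saying the crossed product by the translation action is Morita equivalent to the $\delta\rho$-twisted extension — and then bootstrap the general case by combining with Example~\eqref{ex:imprimitivity} and the fact (from Example~\eqref{ex:transformationgroupoid}) that equivariant Morita equivalences pass to crossed products. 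Once the bimodule and its two actions are pinned down, the verification of Definition~\eqref{D:groupoidME} is routine.
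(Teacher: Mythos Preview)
Your approach is essentially the paper's: the bimodule is $P=G\times\Gc_1$, with left action $(g_1,t,\gamma_1)\cdot(g_2,\gamma_2)=(g_1g_2,\gamma_1\gamma_2)$ exactly as you wrote and right action $(g,\gamma_1)\cdot(n,\gamma_2)=(gn\rho(\gamma_2),\gamma_1\gamma_2)$, the factor $\rho(\gamma_2)$ being precisely the $\delta\rho$-correction you anticipate. The only adjustment needed is the left moment map, which should be $(g,\gamma)\mapsto(g\rho(\gamma)^{-1}N,\,r\gamma)$ rather than $(gN,r\gamma)$; with this fix the checks in your Steps~(1)--(3) go through directly.
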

\begin{proof}
The equivalence bimodule is $P=G\times\Gc$, endowed with the following structures.
\begin{enumerate}
\item Moment maps: $P\ni(g,\gamma)\mapsto(g\rho(\gamma)^{-1},r\gamma)\in\Hc_0$ and $P\ni(g,\gamma)\mapsto s\gamma\in\Kc_0$.
\item $\Hc$-action: $\Hc\times_{\Hc_0}P\ni((g_1,t,\gamma_1),(g_2,\gamma_2))\mapsto(g_1g_2,\gamma_1\gamma_2)\in P,$ whenever\newline  $t=g_1g_2\rho(\gamma_2)^{-1}\rho(\gamma_1)^{-1}\in G/N$.
\item $\Kc$-action: $P\times_{\Kc_0}\Kc\ni ((g,\gamma_1),(n,\gamma_2))\mapsto(gn\rho(\gamma_2),\gamma_1\gamma_2)\in P$.
\end{enumerate}
Direct checks show that these definitions make $P$ a Morita equivalence bimodule.
%See notes060404.
\end{proof}
\end{example}
\noindent\textbf{Summary of notation. } For convenience, let us summarize the notation that has been developed in these examples.
\begin{itemize}
\item $(G\ltimes \Gc)$ denotes a crossed product groupoid.  It is in some sense a quotient of $\Gc$ by $G$.
\item $(G\rtimes_\rho\Gc)$ denotes a principal bundle over $\Gc$.
\item $(G\rtimes^\sigma\Gc)$ denotes an extension of $\Gc$ by $G$.  We will see that this corresponds to a presentation of a $G$-gerbe over $\Gc$.
\end{itemize}

\section{Groupoid algebras, K-theory, and strong Morita equivalence}\label{ss:groupoidalgebra}%\checkmark
 Let $\Gc$ be a groupoid.  The continuous compactly supported functions $\Gc_1\to\Cb$ form an associative algebra, denoted $C_c(\Gc)$, for the following multiplication called \textbf{groupoid convolution}.
\begin{equation}
a*b(\gamma):=\int_{\gamma_1\gamma_2=\gamma}a(\gamma_1)b(\gamma_2)
\end{equation}
for $a,b\in C_c(\Gc)$ and $\gamma's\in\Gc$.  Integration is with respect to the fixed left Haar system of measures.
This algebra has an involution,
\[ a\mapsto a^*(\gamma)=\overline{a(\gamma^{-1})} \]
(the overline denotes complex conjugation), and can be completed in a canonical way to a $C^*$-algebra (see \cite{Ren}) which we simply refer to as the \textbf{groupoid algebra} and denote $C^*(\Gc)$ or $C^*(\Gc_1\arrows\Gc_0)$.

The groupoid algebra is a common generalization of the continuous functions on a topological space, to which this reduces when $\Gc$ is the unit groupoid, and of the convolution $C^*$-algebra of a locally compact group, to which this reduces when the unit space is a point.  Indeed, by definition of the groupoid algebra we have $C^*(X\arrows X)=C(X)$ and $C^*(G\arrows *)=C^*(G)$ when $X$ is a locally compact Hausdorff space and $G$ is a locally compact Hausdorff group.

As is probably common, we will define the \textbf{K-theory of $\Gc$}, denoted $K(\Gc)$, to be the $C^*$-algebra K-theory of its groupoid algebra.  Here are the facts we need about groupoid algebras and $K$-theory:
\begin{prop}\label{P:moritaFacts}\ \newline
\begin{enumerate}
\item \cite{MRW} A Morita equivalence of groupoids gives rise to a (strong) Morita equivalence of the associated groupoid algebras.
\item  A Morita equivalence between $G$-groupoids $\Gc$ and $\Hc$ gives rise to a Morita equivalence between the crossed product $C^*$-algebras $G\!\ltimes\! C^*(\Gc)$ and $G\!\ltimes\! C^*(\Hc)$.
\item Groupoid $K$-theory is invariant under Morita equivalence.
\end{enumerate}
\end{prop}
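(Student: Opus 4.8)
The plan is to recognize that this proposition merely assembles three standard functoriality results, so the proof reduces to citing the fundamental theorem of \cite{MRW} for part (1) and then propagating it through two routine constructions for parts (2) and (3). For part (1), given a Morita equivalence $(\Gc$-$\Hc)$-bimodule $P$ as in Definition \ref{D:groupoidME}, I would recall the construction of \cite{MRW}: the space $C_c(P)$ of continuous compactly supported functions on $P$ is equipped with a left $C_c(\Gc)$-action, a right $C_c(\Hc)$-action, and $C_c(\Gc)$- and $C_c(\Hc)$-valued inner products built by integrating against the Haar systems over the appropriate fibres; completing $C_c(P)$ then yields a $C^*(\Gc)$-$C^*(\Hc)$ imprimitivity bimodule, which is exactly a strong Morita equivalence. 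The point is that the local compactness, second countability, and Haar system hypotheses imposed in Section \ref{S:groupoidsAndGgroupoids} are precisely what \cite{MRW} requires for these inner products to be well defined and for the completions to behave, so for us part (1) is essentially a direct citation.

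For part (2), I would avoid re-deriving an equivariant imprimitivity bimodule from scratch and instead reduce to part (1) via crossed product groupoids. Example \ref{ex:transformationgroupoid} already supplies, from an equivariant Morita equivalence bimodule $P$ between the $G$-groupoids $\Gc$ and $\Hc$, an explicit Morita $(G\ltimes\Gc)$-$(G\ltimes\Hc)$-bimodule $G\times P$; thus $G\ltimes\Gc$ and $G\ltimes\Hc$ are Morita equivalent as groupoids. Applying part (1) gives $C^*(G\ltimes\Gc)\Morita C^*(G\ltimes\Hc)$. It then remains to invoke the standard identification $C^*(G\ltimes\Gc)\isom G\ltimes C^*(\Gc)$ of the groupoid algebra of a transformation groupoid with the crossed product of the group by the groupoid algebra, and similarly for $\Hc$, which converts the displayed equivalence into the asserted $G\ltimes C^*(\Gc)\Morita G\ltimes C^*(\Hc)$.

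For part (3), I would combine part (1) with the general fact that strong Morita equivalence of $C^*$-algebras preserves $K$-theory. Concretely, by part (1) the groupoid algebras $C^*(\Gc)$ and $C^*(\Hc)$ are strongly Morita equivalent; since second countability forces these algebras to be separable, hence $\sigma$-unital, the Brown--Green--Rieffel theorem identifies strong Morita equivalence with stable isomorphism, and $K$-theory is invariant under both stabilization and isomorphism. Hence $K(\Gc)=K(C^*(\Gc))\isom K(C^*(\Hc))=K(\Hc)$, the first and last equalities holding by our definition of $K(\Gc)$.

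The genuine content lives entirely in \cite{MRW}, so the main obstacle is not in our assembly but in the analysis underlying part (1): verifying positivity and boundedness of the $C_c(\Gc)$- and $C_c(\Hc)$-valued inner products on $C_c(P)$ and controlling the completions, which is exactly why the standing hypotheses on Haar systems and second countability were imposed. Within the portion we must supply ourselves, the only delicate point is checking that the identification $C^*(G\ltimes\Gc)\isom G\ltimes C^*(\Gc)$ used in part (2) is compatible with the bimodule structures, i.e.\ that the groupoid-level equivalence implemented by $G\times P$ matches the algebra-level crossed product equivalence; this is where I would spend the most care, though it is ultimately a bookkeeping check rather than a new analytic estimate.
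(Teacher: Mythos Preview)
Your proposal is correct and follows essentially the same route as the paper: cite \cite{MRW} for (1), reduce (2) to (1) via the identification $C^*(G\ltimes\Gc)\simeq G\ltimes C^*(\Gc)$ together with the Morita equivalence $G\ltimes\Gc\sim G\ltimes\Hc$ from Example~\ref{ex:transformationgroupoid}, and deduce (3) from the Morita invariance of $C^*$-algebra $K$-theory. You supply more detail than the paper (the explicit bimodule completion, the Brown--Green--Rieffel reduction), but the logical skeleton is identical.
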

\begin{proof} The first statement is the main theorem of \cite{MRW}.  The second statement follows from the first after noting that the definitions of $G\!\ltimes\! C^*(\Gc)$ and $C^*(G\ltimes\Gc)$ coincide and that $G\ltimes\Gc$ is Morita equivalent to $G\ltimes\Hc$ (see Example \eqref{ex:transformationgroupoid}).  The last statement now follows from the Morita invariance of $C^*$-algebra $K$-theory.
\end{proof}
Let $\Gc$ and $\Hc$ be Morita equivalent groupoids.  It is useful to know that in \cite{MRW} a $C^*(\Gc)$-$C^*(\Hc)$-bimodule is constructed directly from a $\Gc$-$\Hc$-Morita equivalence bimodule $P$.  The $C^*$-algebra bimodule is a completion of $C_c(P)$ and has the actions induced from the translation actions of $\Gc$ and $\Hc$ on $P$.  We present a generalization of this in Lemma \eqref{L:appendixLemma1}.

\section{Equivariant groupoid cohomology}\label{S:equivariantcoho}%\checkmark
In this section we define equivariant groupoid cohomology for $G$-groupoids.  Equivariant 2-cocycles will give rise to what we call \emph{equivariant gerbes}.

Let $\Hc$ be a groupoid and $B\stackrel{b}{\to}\Hc_0$ a left $\Hc$-module each of whose fibers over $\Hc_0$ is an abelian group, that is a (not necesssarily locally trivial) \textbf{bundle of groups} over $\Hc_0$.  Then one defines the \textbf{groupoid cohomology with $B$ coefficients}, denoted $H^*(\Hc;B)$, as the cohomology of the complex $(C^\bullet(\Hc;B),\delta)$, where
\[ C^k(\Hc;B):=\{\text{ continuous maps } f:\Hc_k\to B\ |\ b(f(h_1,\dots,h_n))=rh_1\ \} \]
and for $f\in C^k(\Hc;B),$
\[ \delta f(h_1,\dots,h_{k+1}):=h_1\cdot f(h_2,\dots,h_{k+1})
+\sum_{i=1\dots k}(-1)^if(h_1,\dots,h_ih_{i+1},\dots,h_{k+1})\]
\[
+(-1)^{k+1}f(h_1,\dots,h_{k+1}).
\]
As is common, we tacitly restrict to the quasi-isomorphic subcomplex
\[ \{f\in C^k\ |\ f(h_1,\dots,h_k)=0\text{ if some }h_i\text{ is a unit }\},\] except for 0-cochains which have no such restriction.
When the $\Hc$-module is $B=\Hc_0\times A$, where $A$ is an abelian group, we write $A$ for the cohomology coefficients.

When $\Hc$ is the \v{C}ech groupoid of a locally finite cover of a topological space $X$ and $B$ is the \'etale space of a sheaf of abelian groups on $X$, then $C^\bullet$ is identical to the \v{C}ech complex of the cover with coefficients in the sheaf of sections of $B$, so this recovers \v Cech cohomology of the given cover.  On the other hand, when $\Hc$ is a group this recovers continuous group cohomology.
\newline

If $\Hc$ is a $G$-groupoid then $C^\bullet(\Hc;B)$ becomes a complex of left $G$-modules by
\[ g\cdot f(h_1,\dots,h_n):=f(g^{-1}h_1,\dots,g^{-1}h_n)\qquad f\in C^k(\Hc;B), \]
and one can form the double complex
\[ K^{p,q}=(C^p(G;C^q(\Hc;B)),d,\delta), \]
where $d$ denotes the groupoid cohomology differential for $G\arrows*$.
The \textbf{$G$-equivariant cohomology} of $\Hc$ with values in $B$, denoted $H_G^*(\Hc;B)$, is the cohomology of the total complex
\[ \tot(K)^n:=(\oplus_{p+q=n}K^{p,q},D=d+(-1)^p\delta). \]

As one would hope, there is a chain map from the complex $\tot(K)$ computing equivariant cohomology to the chain complex associated to the crossed product groupoid:
\begin{prop} The map
\begin{equation}\label{Eq:equicohoCrossedCoho}
F:\tot K^\bullet\To C^\bullet(G\ltimes\Hc;B), \end{equation}
defined to be the sum of the maps
\[ C^p(G;C^q(\Hc;B))\mTo{f_{pq}}C^{p+q}(G\ltimes\Hc;B) \]
\[f_{pq}(c)((g_1,\gamma_1),(g_2,g_1^{-1}\gamma_2),\dots ,(g_{p+q},(g_1g_2\dots g_{p+q-1})^{-1}\gamma_{p+q}))\]
\[:=c(g_1,\dots,g_p,\gamma_{p+1},\dots,\gamma_{p+q})\]
for $c\in C^p(G;C^q(\Hc;B))$, $g's\in G$, and $(\gamma_1,\dots,\gamma_{p+q})\in\Hc_{p+q}$, is a morphism of chain complexes.
\end{prop}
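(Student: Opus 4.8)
The plan is to check directly that $F$ commutes with the differentials. Since $F=\sum f_{pq}$ is additive and the differential of $\tot K$ restricted to the summand $K^{p,q}=C^p(G;C^q(\Hc;B))$ is $D=d+(-1)^p\delta$, it suffices to fix $c\in K^{p,q}$ and establish the single identity
\[ \delta_{G\ltimes\Hc}\bigl(f_{pq}(c)\bigr)=f_{p+1,q}(dc)+(-1)^p f_{p,q+1}(\delta c)\qquad\text{in } C^{p+q+1}(G\ltimes\Hc;B), \]
where $\delta_{G\ltimes\Hc}$ denotes the groupoid cohomology differential of the crossed product. Both sides are cochains of degree $n+1$ with $n:=p+q$, so I would compare their values on an arbitrary composable $(n+1)$-tuple of arrows of $G\ltimes\Hc$. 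By the description of $G\ltimes\Hc$ in Example \eqref{ex:transformationgroupoid}, every such tuple is uniquely of the form $\xi_k=(g_k,(g_1\cdots g_{k-1})^{-1}\gamma_k)$ for some $(g_1,\dots,g_{n+1})\in G^{n+1}$ and composable $(\gamma_1,\dots,\gamma_{n+1})\in\Hc_{n+1}$, and I will compute in this parametrization.

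The preliminary bookkeeping is to record two things. First, how the face operations read off in this parametrization: the composition $\xi_i\xi_{i+1}$ ($1\le i\le n$) replaces $g_i,g_{i+1}$ by $g_ig_{i+1}$ and $\gamma_i,\gamma_{i+1}$ by $\gamma_i\gamma_{i+1}$; omitting $\xi_{n+1}$ drops $g_{n+1}$ and $\gamma_{n+1}$; and omitting $\xi_1$ — the operation entering the leading term $\xi_1\cdot(-)$ of $\delta_{G\ltimes\Hc}$ — replaces $(g_1,\dots,g_{n+1})$ by $(g_2,\dots,g_{n+1})$ and $(\gamma_1,\dots,\gamma_{n+1})$ by the translated sequence $(g_1^{-1}\gamma_2,\dots,g_1^{-1}\gamma_{n+1})$. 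Second, the module conventions implicit in $f_{pq}$: the element $c(g_1,\dots,g_p)(\gamma_{p+1},\dots,\gamma_{p+q})$ lies in the fibre of $B$ over $r\gamma_{p+1}=s\gamma_p$, and $f_{pq}(c)$ on the tuple $(\xi_1,\dots,\xi_{p+q})$ is obtained by transporting it to the fibre over $r\gamma_1$ via the arrow $\gamma_1\cdots\gamma_p$; correspondingly the $G$-action on $C^q(\Hc;B)$ and the $G\ltimes\Hc$-action on $B$ are those determined, together with the given $\Hc$-module structure, by the $G$-equivariant structure on $B$. Pinning down these conventions precisely is in my view the one genuinely delicate point; the combinatorics that follows is then forced, and makes essential use of the fact that $G$ acts on $\Hc$ by automorphisms (so $(g^{-1}\gamma_2)\cdots(g^{-1}\gamma_{p+1})=g^{-1}(\gamma_2\cdots\gamma_{p+1})$) and of the compatibility $g\cdot(\eta\cdot y)=(g\eta)\cdot(g\cdot y)$ of the two actions on $B$.

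With that in hand I would expand $\delta_{G\ltimes\Hc}(f_{pq}(c))$ on $(\xi_1,\dots,\xi_{n+1})$ as its $n+2$ terms $T_0,\dots,T_{n+1}$ and identify each. The term $T_0=\xi_1\cdot f_{pq}(c)(\xi_2,\dots,\xi_{n+1})$, after pushing $g_1$ through the translated $\gamma$-sequence using the two facts just mentioned, becomes exactly the leading term $(\gamma_1\cdots\gamma_{p+1})\cdot\bigl([g_1\cdot c(g_2,\dots,g_{p+1})](\gamma_{p+2},\dots,\gamma_{n+1})\bigr)$ of $f_{p+1,q}(dc)$. For $1\le i\le p$ the term $T_i$ matches the $i$-th composition term of $f_{p+1,q}(dc)$ (the transporting arrow $\gamma_1\cdots\gamma_{p+1}$ is unaffected by composing two of its factors); for $p+1\le i\le n$ the term $T_i$ matches the $(i-p)$-th composition term of $(-1)^p f_{p,q+1}(\delta c)$; and $T_{n+1}$ matches the last term of $(-1)^p f_{p,q+1}(\delta c)$. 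This accounts for all $n+2$ of the $T_i$ and for all but two terms of the right-hand side — namely the ``drop last $g$'' term of $f_{p+1,q}(dc)$ and the leading term of $(-1)^p f_{p,q+1}(\delta c)$ — which both equal $(\gamma_1\cdots\gamma_{p+1})\cdot\bigl(c(g_1,\dots,g_p)(\gamma_{p+2},\dots,\gamma_{n+1})\bigr)$ up to the opposite signs $(-1)^{p+1}$ and $(-1)^p$ and therefore cancel, completing the identity. (Conceptually $F$ is the Alexander--Whitney front-face/back-face comparison map for the diagonal of the bisimplicial object $[p],[q]\mapsto G^p\times\Hc_q$, so the statement is an instance of a standard fact; but the direct verification above is short enough to record in place.)
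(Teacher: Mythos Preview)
Your proof is correct and is exactly the ``direct check'' the paper alludes to; the paper's own proof is the single sentence ``This is a direct check,'' and you have simply carried that check out in full. Your added care about the fibre transport (which the paper's formula suppresses, implicitly working with trivial coefficients such as $\Uone$) and the Alexander--Whitney remark are welcome clarifications but do not constitute a different approach.
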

\begin{proof} This is a direct check. \end{proof}
It seems likely that this is a quasi-isomorphism, but we have not proved it.  In Section \eqref{S:cohoFacts} it is shown that $H(G\ltimes\Hc;B)$ is always a summand of $H_G(\Hc;B)$, which is enough for the present purposes.
\newline

\begin{rem} These cohomology groups are not Morita invariant.  For example, different covers can have different \v{C}ech cohomology.  One can form a Morita invariant cohomology (that is, stack cohomology); it is the derived functors of $B\mapsto \Hom_\Hc(\Hc_0,B)=\Gamma(\Hc_0,B)^\Hc$, which homological algebra tells us can be computed by using a resolution of $B$ by injective $\Hc$-modules\footnote{ There are enough injective $\Hc$-modules for \'etale groupoids, but we do not know if this is true for general groupoids.}.  However, the cocycles obtained via injective resolutions are often not useful for describing geometric objects such as bundles and groupoid extensions, so we will stick with the groupoid cohomology as defined above (which is the approximation to these derived functors obtained by resolving $\Hc_0$ by $\Hc_\bullet$ and taking the cohomology of $\Hom_\Hc(\Hc_\bullet;B)$).  In Section \eqref{S:twistedMoritaEquivalence} we will show how to compare the respective groupoid cohomology groups of two groupoids which are Morita equivalent.
\end{rem}
We will also encounter cocycles with values in a bundle of nonabelian groups $B$, defined in degrees $n=0,1,2$.  The spaces of cochains are the same and a $0$-cocycle is also the same as in the abelian setting.  In degree one we say $\rho\in C^1(\Hc;B)$ is closed when $\delta\rho(\gamma_1,\gamma_2):=\rho(h_1)h_1\cdot\rho(h_2)\rho(h_1h_2)^{-1}=1$ and $\rho$ and $\rho'$ are cohomologous when $\rho'(h)=h\cdot\alpha(sh)\rho(h)\alpha^{-1}(rh)$.
A nonabelian 2-cocycle is a pair $(\sigma,\tau)$ as in Example \eqref{ex:NCgerbe}.

\section{Gerbes and twisted groupoids}\label{S:gerbes}
In this section we describe various constructions that can be made with 2-cocycles and, in particular, explain our slightly non-standard use of the term \emph{gerbe}.  We also describe the construction of equivariant gerbes from equivariant cohomology.

Given a 2-cocycle $\sigma\in Z^2(\Gc;N)$, where $N$ is an abelian group, we can form an extension of $\Gc$ by $N$:
\[ N\rtimes^\sigma\Gc:=(N\times\Gc_1\arrows\Gc_0), \] with multiplication \[(n_1,\gamma_1)\circ(n_2,\gamma_2):=(n_1n_2\sigma(\gamma_1,\gamma_2),\gamma_1\gamma_2).\]
More generally, if $B$ is a bundle of not necessarily abelian groups, and $(\sigma,\tau)$ a $B$-valued nonabelian 2-cocycle, then we can form the groupoid extension $B\rtimes^\sigma\Gc$ that was described in Example \eqref{ex:NCgerbe}.  We will call such an extension a \textbf{$B$-gerbe}, or an \textbf{$N$-gerbe} if $B=\Gc_0\times N$ is a constant bundle of (not necessarily abelian) groups.

The term \emph{gerbe} comes from Giraud's stack theoretic interpretation of degree two nonabelian cohomology (\cite{Gir}).  In the following few paragraphs (everything up to Definition \eqref{D:twistedGroupoid}) we will outline the stack theoretic terminology leading to Giraud's gerbes.  The point of the outline is only to clear up terminology, and can be skipped.  A nice reference for topological stacks is \cite{Met}.

Let $\Cc$ be any category.  A topological stack is a functor $F:\Cc\to Top$ satisfying a certain list of axioms.  A morphism of stacks from $(F:\Cc\to Top)$ to $(F':\Cc'\to Top)$ is a functor $\alpha:\Cc\to\Cc'$ (satisfying a couple of axioms) such that $F=F'\circ\alpha$.  Such a morphism is an equivalence of stacks when $\alpha$ is an equivalence of categories.

Given a groupoid $\Gc$, define $\Prin_\Gc$ to be the category whose objects are locally trivial right principal $\Gc$-modules and whose homs are the continuous $\Gc$-equivariant maps.  This category has a natural functor to $Top$ which sends a principal module $P$ to the quotient $P/\Gc$, and in fact satisfies the axioms for a stack.  This stack (which we denote $\Prin_\Gc$) is called the stack associated to $\Gc$.  A stack which is equivalent to $\Prin_\Gc$ is called presentable and $\Gc$ is called a \textbf{presentation} of the stack.

The discussion following Definition \eqref{D:groupoidME} shows that a locally trivial right principal $\Gc$-$\Hc$-bimodule $P$ induces a functor $*P:\Prin_\Gc\to\Prin_\Hc$.  It is in fact a morphism of stacks, and is an equivalence of stacks when $P$ is also left principal (that is when $P$ is a Morita equivalence bimodule).  Conversely, if there is an equivalence of stacks $\Prin_\Gc\to\Prin_\Hc$, then $\Gc$ and $\Hc$ are Morita equivalent.  Thus any statement about groupoids which is Morita invariant is naturally a statement about presentable stacks.  We will only work with presentable stacks in this paper, and when stacks are mentioned at all, it will only be as motivation for making Morita invariant constructions.

According to Giraud \cite{Gir}, a gerbe over a stack $\Cc'$ is a stack $\Cc$ equipped with a morphism of stacks $\alpha:\Cc\to\Cc'$ satisfying a couple of axioms.  Now, the extension $B\rtimes^\sigma\Gc$ has its natural quotient map to $\Gc$ (and this quotient map is a functor), and this determines a morphism of stacks $\Prin_{(B\rtimes^\sigma\Gc)}\to\Prin_\Gc$ which in fact makes $\Prin_{(B\rtimes^\sigma\Gc)}$ into what Giraud called a \emph{$B$-gerbe} over the stack $\Prin_\Gc$ (see also \cite{Met} definition 84).  When $\Gc$ is Morita equivalent to a space $X$, one usually calls this a gerbe over $X$.  Thus we call the groupoid $B\rtimes^\sigma\Gc$ a $B$-gerbe, although it is actually a \emph{presentation} of a $B$-gerbe.  Hopefully this will not cause much confusion.

\begin{rem} The gerbes described so far have the property that the quotient map $(N\rtimes^\sigma\Gc)_1\to\Gc_1$ admits a continuous section.  In other words, since $N$ acts on an $N$-gerbe, $N\rtimes^\sigma\Gc_1$ is a trivial principal $N$-bundle (or for $B$-gerbes, a trivial $B$-torsor).  The obstruction to all gerbes being of this form is the degree one sheaf cohomology of the space $\Gc_1$ with coefficients in the sheaf of $N$-valued functions, $H^1_{Sheaf}(\Gc_1;N)$.  Since many groupoids admit a refinement for which this obstruction vanishes (in particular \'Cech groupoids do), there are plenty of situations in which one may assume the gerbe admits the above 2-cocycle description.  Nonetheless, we will encounter gerbes which are not trivial bundles, such as the ones in Example \eqref{Ex:bestgroupoid1}.
\end{rem}

Closely related to gerbes is the following notion.
\begin{defn}\label{D:twistedGroupoid} Let $\Gc$ be a groupoid and let $B\to\Gc_0$ be a bundle of groups.  A \textbf{B-twisted groupoid} is a pair $(\Gc,(\sigma,\tau))$, where $(\sigma,\tau)$ is a $B$-valued (nonabelian) 2-cocycle over $\Gc$ as in Example \eqref{ex:NCgerbe}.  When $\tau$ is understood to be trivial, we simply write $(\Gc,\sigma)$, and when $B$ is the constant bundle $\Gc_0\times\Uone$ we simply call the pair a \textbf{twisted groupoid}.
\end{defn}
In fact a $B$-twisted groupoid contains the exact same data as a $B$-gerbe.  However, we will encounter a type of duality which takes twisted groupoids to $\Uone$-gerbes and does not extend to a ``gerbe-gerbe'' duality.  Thus it is necessary to have both descriptions at hand.

We would like to make $C^*$-algebras out of twisted groupoids in order to define twisted K-theory.  Here is the definition.
\begin{defn}\cite{Ren}  Given a twisted groupoid $(\Gc,\sigma\in Z^2(\Gc;\Uone))$, the associated \textbf{twisted groupoid algebra}, denoted $C^*(\Gc,\sigma)$, is the $C^*$-algebra completion of the compactly supported functions on $\Gc_1$, with \textbf{$\sigma$-twisted multiplication}
\[ a*b(\gamma):=\int_{\gamma_1\gamma_2=\gamma}a(\gamma_1)b(\gamma_2)\sigma(\gamma_1,\gamma_2);\quad a,b\in C_c(\Gc_1) \]
and involution $a\mapsto a^*(\gamma):=\overline{a(\gamma^{-1})\sigma(\gamma,\gamma^{-1})}$.  Here functions are $\Cb$-valued and the overline denotes complex conjugation.  Of course a groupoid algebra is exactly a twisted groupoid algebra for $\sigma=1$.
\end{defn}
\begin{defn} The \textbf{twisted K-theory} of a twisted groupoid $(\Gc,\sigma)$ is the K-theory of $C^*(\Gc,\sigma)$.
\end{defn}

Now suppose $G$ is a locally compact group and $\Hc$ is a $G$-groupoid.  By definition, a $\Uone$-valued 2-cocycle in $G$-equivariant cohomology is of the form:
\begin{equation}\label{eq:twococycle}
(\sigma,\lambda,\beta)\in C^0(G;Z^2(\Hc;\Uone))\times C^1(G;C^1(\Hc;\Uone))\times Z^2(G;C^0(\Hc;\Uone)).\\ \end{equation}
and it satisfies the cocycle condition:
\[ D(\sigma,\lambda,\beta)=(\delta\sigma,\ \delta\lambda^{-1}d\sigma,\ \delta\beta d\lambda,\ d\beta)=(1,1,1,1). \]

The first component, $\sigma$, of the triple determines a twisted groupoid algebra $C^*(\Hc;\sigma)$.  Now the translation action of $G$ on $C^*(\Hc)$,
\[ g\cdot a(\gamma):=a(g^{-1}\gamma);\qquad g\in G,\ h\in\Hc,\ a\in C^*(\Hc;\sigma) \]
is not an action on $C^*(\Hc;\sigma)$ because
\[ g\cdot(a*_\sigma b)\neq(g\cdot a)*_\sigma(g\cdot b)\text{ for }a,b\in C^*(\Hc;\sigma),\ g\in G.\]
The second and third components are ``correction terms'' that allow $G$ to act on the twisted groupoid algebra.  Indeed, define a map
\[ \alpha:G\to \Aut(C^*(\Hc;\sigma))\quad \alpha_g(a)(h):=\lambda(g,h)g\cdot a(h).\]
Then we have
\[ \{\ \alpha_g(a*_\sigma b)=\alpha_g(a)*_\sigma\alpha_g(b)\}\Longleftrightarrow \{\ d\sigma=\delta\lambda\},\]
so $\alpha$ does land in the automorphisms $C^*(\Hc;\sigma)$.  However, this is still not a group homomorphism since in general $\alpha_{g_1}\circ\alpha_{g_2}\neq\alpha_{g_1g_2}$. If we attempted to construct a crossed product algebra $G\ltimes_\alpha C^*(\Hc)$ it would not be associative.  The failure of $\alpha$ to be homomorphic is corrected by the third component, $\beta$.  An interpretation of $\beta$ is that it determines a family over $\Hc_0$ of deformations of $G$ as a noncommutative space from which $\alpha$ \emph{is} in some sense a homomorphism.  We encode this ``noncommutative G-action'' in the following twisted crossed product algebra:
\[ G\ltimes_{\lambda,\beta}C^*(\Hc;\sigma), \]
which is the algebra with multiplication
\[
a*b(g,h):=\int_{\stackrel{h_1h_2=h}{g_1g_2=g}}a(g_1,h_1)b(g_2,g_1^{-1}h_2)\chi((g_1,h_1),(g_2,g_1^{-1}h_2))
\]
where
\[ \chi((g_1,h_1),(g_2,g_1^{-1}h_2)):=\sigma(h_1,h_2)\lambda(g_1,h_2)\beta(g_1,g_2,sh_2). \]
\begin{lem} Let $G\ltimes\Hc$ be the crossed product groupoid associated to the $G$ action on $\Hc$ (as in Example \eqref{ex:transformationgroupoid}).  Then $\chi\in Z^2(G\ltimes\Hc;\Uone)$.  Consequently the multiplication on
\[ G\ltimes_{\lambda,\beta}C^*(\Hc;\sigma)\equiv C^*(G\ltimes\Hc,\chi) \]
is associative.
\end{lem}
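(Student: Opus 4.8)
The plan is to verify that $\chi$ is a normalized groupoid $2$-cocycle on $G \ltimes \Hc$ by direct computation, using the cocycle condition $D(\sigma,\lambda,\beta) = (1,1,1,1)$ unpacked into its four components $\delta\sigma = 1$, $\delta\lambda^{-1} d\sigma = 1$, $\delta\beta\, d\lambda = 1$, and $d\beta = 1$. The associativity statement is then an immediate consequence: the paper has already noted (Definition following \eqref{D:twistedGroupoid}) that for any twisted groupoid $(\Kc,\chi)$ with $\chi \in Z^2(\Kc;\Uone)$ the $\chi$-twisted convolution product is associative, so once $\chi$ is shown to be a cocycle, applying this with $\Kc = G\ltimes\Hc$ finishes the proof. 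The only real content is the cocycle identity, and everything else is bookkeeping about which groupoid the convolution lives on.

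First I would write out the coboundary $\delta\chi$ explicitly for a triple of composable arrows in $G\ltimes\Hc$. A composable triple has the form
\[
\bigl((g_1,\gamma_1),\ (g_2, g_1^{-1}\gamma_2),\ (g_3, (g_1g_2)^{-1}\gamma_3)\bigr),
\]
and the four face maps of the nerve of $G\ltimes\Hc$ involve (i) dropping the first arrow, (ii)–(iii) composing adjacent arrows via $(g,\gamma)\circ(g',g^{-1}\gamma') = (gg',\gamma\gamma')$, and (iv) dropping the last. Substituting the formula $\chi((g,\gamma_1),(g',g^{-1}\gamma_2)) = \sigma(\gamma_1,\gamma_2)\lambda(g,\gamma_2)\beta(g,g',s\gamma_2)$ into each face, I expect the terms to organize themselves into three groups: the $\sigma$-only terms, which assemble into $\delta\sigma$ evaluated on $(\gamma_1,\gamma_2,\gamma_3)$; the mixed $\sigma$–$\lambda$ terms, which assemble into $(d\sigma)(\delta\lambda)^{-1}$; and the $\lambda$–$\beta$ and $\beta$-only terms, which assemble into $(\delta\beta)(d\lambda)$ and $d\beta$ respectively. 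Each of these vanishes by one component of $D(\sigma,\lambda,\beta)=1$, so $\delta\chi = 1$. Normalization (that $\chi$ vanishes when one of its arguments is a unit) follows from the tacit normalization of $\sigma,\lambda,\beta$ recorded in Section \eqref{S:equivariantcoho}, together with the fact that a unit of $G\ltimes\Hc$ is $(e, \gamma)$ with $\gamma$ a unit of $\Hc$.

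The main obstacle — really the only place care is needed — is getting the arguments of $\sigma$, $\lambda$, and $\beta$ to line up correctly under the face maps, since the composition law in $G\ltimes\Hc$ twists the $\Hc$-component by the $G$-component (the second slot of a composable pair carries $g_1^{-1}\gamma_2$ rather than $\gamma_2$), and because $\beta$'s last argument is $s\gamma_2 = s(g_1^{-1}\gamma_2)$, which one must check is unchanged under the relevant compositions using that $G$ acts by groupoid automorphisms (so $s(g^{-1}\gamma) = g^{-1}(s\gamma)$ as a point of $\Hc_0$, but $\beta$ is evaluated as a function on $\Hc_0$ via the translation $G$-action already built into the double complex $K^{p,q}$). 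In fact this is exactly the computation encoded in the chain map $F$ of \eqref{Eq:equicohoCrossedCoho}: the formula for $\chi$ is precisely $F$ applied to the total cocycle $(\sigma,\lambda,\beta)$ in bidegrees $(0,2)+(1,1)+(2,0)$, and since $F$ is a morphism of chain complexes, $\delta\chi = \delta(F(\sigma,\lambda,\beta)) = F(D(\sigma,\lambda,\beta)) = F(1) = 1$. So the cleanest writeup is to simply invoke the already-established fact that $F$ is a chain map rather than redoing the face-by-face bookkeeping; I would present the direct-check version only as a remark or leave it to the reader, as the paper's proof of the preceding proposition already does (``This is a direct check'').
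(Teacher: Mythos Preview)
Your proposal is correct and matches the paper's proof exactly: the paper simply observes that $\chi$ is the image of the cocycle $(\sigma,\lambda,\beta)$ under the chain map $F$ of Equation \eqref{Eq:equicohoCrossedCoho}, hence is itself a cocycle. Your suggestion to invoke $F$ being a chain map rather than redo the face-by-face bookkeeping is precisely what the paper does, in a single sentence.
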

\begin{proof} $\chi$ is the image of the cocycle $(\sigma,\lambda,\beta)$ under the chain map of Equation \eqref{Eq:equicohoCrossedCoho}, thus it is a cocycle. \end{proof}
Now it is clear how to interpret the data $(\sigma,\lambda,\beta)$ at groupoid level: it is the data needed to extend the twisted groupoid $(\Hc,\sigma)$ to a twisted crossed product groupoid $(G\ltimes\Hc,\chi)$.  The meaning of ``extend'' in this context is that $(\Hc,\sigma)$ is a sub-(twisted groupoid):
\[ (\Hc,\sigma)\simeq(\{\ 1\}\ltimes\Hc,\sigma)\subset(G\ltimes\Hc,\chi), \]
which follows from the fact that $\chi|_{\Hc}\equiv\sigma$.
Clearly in the groupoid interpretation the $\Uone$ coefficients can be replaced by an arbitrary system of coefficients.
\begin{defn} The pair $(\Hc,(\sigma,\lambda,\beta))$, where $\Hc$ is a $G$-groupoid and $(\sigma,\lambda,\beta)\in Z^2_G(\Hc,\Uone)$ will be called a \textbf{twisted $G$-groupoid}.
\end{defn}

\section{Pontryagin duality for generalized principal bundles}
In this section we introduce an extension of Pontryagin duality, which has been a duality on the category of abelian locally compact groups, to a correspondence of the form
\[ \{\text{ generalized principal $G$-bundles }\}\longleftrightarrow
\{\text{ twisted }\widehat{G}\text{-gerbes }\} \]
for any abelian locally compact group $G$.  In fact we extend this to a duality between a $\Uone$-gerbe on a principal $G$-bundle (though only certain types of $\Uone$-gerbes are allowed) and a $\Uone$-gerbe on a $\widehat{G}$-gerbe.
%This duality respects Morita equivalence in a sense that will be described in Section \eqref{S:stacklevel}.
By construction, there will be a Fourier type isomorphism between the twisted groupoid algebras of a Pontryagin dual pair; consequently any invariant constructed from twisted groupoid algebras ($K$-theory for example) will be unaffected by Pontryagin duality.

This duality might be of independent interest, especially because it is not a Morita equivalence and thus induces a nontrivial duality at stack level.  The Pontryagin dual of a $\Uone$-gerbe on a principal torus bundle will play a crucial role in the understanding of T-duality.

Let us fix the following notation for the remainder of this section:
$G$ denotes an abelian locally compact group, $\widehat{G}=\Hom(G,\Uone)$ denotes its Pontryagin dual group, and for elements of $G$ and $\widehat{G}$ we use $g's$ and $\phi's$, respectively, and write evaluation as a pairing $\langle\phi,g\rangle:=\phi(g)$.  As usual $\gamma's$ are elements of a groupoid $\Gc$.

According to our groupoid notation, $(G\arrows G)$ denotes the group $G$ thought of as a topological space while $(G\arrows *)$ denotes group thought of as a group.  Thus by definition, the groupoid algebras $C^*(G\arrows G)$ and $C^*(G\arrows *)$ are functions on $G$ with pointwise multiplication in the first case and convolution multiplication in the second.  Keeping this in mind, we Fourier transform can be interpreted as an isomorphism of groupoid algebras:
\[ \Fc: C^*(G\arrows G)\longrightarrow C^*(\widehat{G}\arrows *) \]
\[ a\mapsto \Fc(a)(\phi):=\int_{g\in G} a(g)\phi(g^{-1}). \]
We use Plancherel measure so that the inverse transform is given by
\[ \Fc^{-1}(\hat{a})(g):=\int_{\phi\in\widehat{G}}\hat{a}(\phi)\phi(g)\qquad\hat{a}\in C^*(\widehat{G}\arrows *).\]
The group $G$ acts on $C^*(G\arrows G)$ by translation
\[  g_1\cdot a(g):=a(gg_1)\qquad a\in C^*(G\arrows G)\]
and the dual group acts by ``dual translation'' on $C^*(G\arrows G)$ by
\[ \phi\star a(g):=\langle\phi,g\rangle a(g). \]
Under Fourier transform translation and dual translation are interchanged:
\begin{equation}\label{e:translation}
\Fc(g\cdot a)(\phi)=\langle\phi,g\rangle\Fc(a)(\phi)=:g\star\Fc(a)(\phi),
\end{equation}
\begin{equation}\label{e:dualtralation}
\Fc(\phi\star a)(\psi)=\Fc(a)(\phi^{-1}\psi)=:\phi^{-1}\cdot\Fc(a)(\psi),\text{ for }\phi,\psi\in\widehat{G}.
\end{equation}
Let us quickly check the first one:
\begin{align*}
\Fc(g\cdot a)(\phi)
&=\int_{g_1}a(g_1g)\phi(g_1^{-1})\\
&=\int_{g'} a(g')\phi((g'g^{-1})^{-1})\\
&=\phi(g)\int_{g'} a(g')\phi(g')=g\star\Fc(a)(\phi).
\end{align*}
With those basic rules of Fourier transform in mind, we are ready to prove:
\begin{defn}\label{D:pontDualityData} Let $G$ be a locally compact abelian group and $\Gc$ a groupoid.  The following data:
\[ \rho\in Z^1(\Gc;G),\ f\in Z^2(\Gc;\widehat{G}),\text{ and }\nu\in C^2(\Gc;\Uone), \]
satisfying $\delta\nu(\gamma_1,\gamma_2,\gamma_3)=\langle f(\gamma_1,\gamma_2),\rho(\gamma_3)^{-1}\rangle$ will be called \textbf{Pontryagin duality data}.
\end{defn}
Given Pontryagin duality data $(\rho,f,\nu)$, the following formulas define twisted groupoids:
\begin{enumerate}
\item The generalized principle bundle $(G\rtimes_\rho\Gc)$
with twisting 2-cocycle:
\[ \sigma^{\nu f}((g,\gamma_1),(g\rho(\gamma_1),\gamma_2)):=
        \nu(\gamma_1,\gamma_2)\langle f(\gamma_1,\gamma_2),g\rangle\in Z^2(G\rtimes_\rho\Gc;\Uone). \]
\item The $\widehat{G}$-gerbe $(\widehat{G}\rtimes^f\Gc)$
with twisting 2-cocycle:
\[ \tau^{\rho\nu}((\phi_1,\gamma_1),(\phi_2,\gamma_2)):=
        \nu(\gamma_1,\gamma_2)\langle\phi_2,\rho(\gamma_1)\rangle\in Z^2(\widehat{G}\rtimes^f\Gc;\Uone). \]
\end{enumerate}
To verify this, simply check that the twistings are indeed 2-cocycles. so that the pairs $(G\rtimes_\rho\Gc, \sigma^{\nu f})$ and $(\widehat{G}\rtimes^f\Gc;\tau^{\rho\nu})$ are actually twisted groupoids.
\begin{thm}[\textbf{ Pontryagin duality for groupoids }]\label{T:pontryagin}\
Let $(G\rtimes_\rho\Gc, \sigma^{\nu f})$ and $(\widehat{G}\rtimes^f\Gc;\tau^{\rho\nu})$ be twisted groupoids constructed from Pontryagin duality data $(\rho,f,\nu)$ as in Definition \eqref{D:pontDualityData}.  Then there is a Fourier type isomorphism between the associated twisted groupoid algebras:
\[ C^*(G\rtimes_\rho\Gc;\sigma^{\nu f})\stackrel{\Fc}{\longrightarrow}
                             C^*(\widehat{G}\rtimes^f\Gc;\tau^{\rho\nu})\]
\[ a\mapsto \Fc(a)(\phi,\gamma):=\int_{g\in G} a(g,\gamma)\phi(g^{-1}),\quad \gamma\in\Gc \]
Also, the natural translation action of $G$ on $C^*(G\rtimes_\rho\Gc;\sigma)$ is taken to the dual translation analogous to Equation \eqref{e:translation}:
\[ \Fc(g\cdot a)(\phi,\gamma)=\langle\phi,g\rangle\Fc(a)(\phi,\gamma)
=:g\star\Fc(a)(\phi,\gamma). \] Note, however, that $G$ only acts by vector space automorphisms (as opposed to algebra automorphisms) unless $f=1$.
\end{thm}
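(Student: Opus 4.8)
The plan is to establish the isomorphism first on the dense convolution $*$-subalgebras of compactly supported functions and then pass to the $C^*$-completions. Using Example \eqref{ex:bundlegroupoid}, the arrows of $G\rtimes_\rho\Gc$ are pairs $(g,\gamma)$ with $(g,\gamma_1)\circ(g\rho(\gamma_1),\gamma_2)=(g,\gamma_1\gamma_2)$; taking the Haar system to be the product of Haar measure on $G$ with that of $\Gc$, the $\sigma^{\nu f}$-twisted product is
\[ a*b(g,\gamma)=\int_{\gamma_1\gamma_2=\gamma}a(g,\gamma_1)\,b(g\rho(\gamma_1),\gamma_2)\,\nu(\gamma_1,\gamma_2)\,\langle f(\gamma_1,\gamma_2),g\rangle . \]
Likewise, in $\widehat{G}\rtimes^f\Gc$ one has $(\phi_1,\gamma_1)\circ(\phi_2,\gamma_2)=(\phi_1\phi_2 f(\gamma_1,\gamma_2),\gamma_1\gamma_2)$, so writing $\phi_2=\phi_1^{-1}\phi f(\gamma_1,\gamma_2)^{-1}$,
\[ \hat a*\hat b(\phi,\gamma)=\int_{\gamma_1\gamma_2=\gamma}\int_{\phi_1\in\widehat G}\hat a(\phi_1,\gamma_1)\,\hat b(\phi_2,\gamma_2)\,\nu(\gamma_1,\gamma_2)\,\langle\phi_2,\rho(\gamma_1)\rangle . \]

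\textbf{Step 2 (the main computation).} Expand $\Fc(a)*\Fc(b)$ by substituting $\Fc(a)(\phi_1,\gamma_1)=\int_{g_1}a(g_1,\gamma_1)\phi_1(g_1^{-1})$ and $\Fc(b)(\phi_2,\gamma_2)=\int_{g_2}b(g_2,\gamma_2)\phi_2(g_2^{-1})$, then collect every factor depending on the integration variable $\phi_1$ into the single character $\phi_1\!\bigl(g_1^{-1}g_2\rho(\gamma_1)^{-1}\bigr)$. By the Plancherel formula (with the normalization fixed in this section), $\int_{\widehat G}\phi_1\!\bigl(g_1^{-1}g_2\rho(\gamma_1)^{-1}\bigr)\,d\phi_1$ is the Dirac mass forcing $g_2=g_1\rho(\gamma_1)$, and carrying out the $g_2$-integration leaves exactly $\Fc(a*b)(\phi,\gamma)$: bilinearity of the pairing telescopes the $f$- and $\rho$-characters, and in particular the factors $\langle f(\gamma_1,\gamma_2),\rho(\gamma_1)\rangle^{\pm1}$ produced respectively by $\tau^{\rho\nu}$ and by the shift $g_2\mapsto g_1\rho(\gamma_1)$ cancel. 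Note that $\nu$ simply passes through on both sides, so this identity is purely formal and does not use the Pontryagin duality relation $\delta\nu(\gamma_1,\gamma_2,\gamma_3)=\langle f(\gamma_1,\gamma_2),\rho(\gamma_3)^{-1}\rangle$; that relation is only what guarantees (as noted just before the theorem) that $\sigma^{\nu f}$ and $\tau^{\rho\nu}$ are $2$-cocycles, hence that both twisted products are associative.

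\textbf{Step 3 ($*$-structure, bijectivity, translations).} One computes the inverse $(g,\gamma)^{-1}=(g\rho(\gamma),\gamma^{-1})$ in $G\rtimes_\rho\Gc$, the corresponding inverse in $\widehat G\rtimes^f\Gc$, and checks $\Fc(a^*)=\Fc(a)^*$ by the same character bookkeeping. Bijectivity is immediate since the fiberwise inverse Fourier transform $\Fc^{-1}(\hat a)(g,\gamma):=\int_{\widehat G}\hat a(\phi,\gamma)\phi(g)\,d\phi$ is a two-sided inverse; thus $\Fc$ is a $*$-isomorphism of the dense $*$-subalgebras, and since the enveloping $C^*$-seminorm is intrinsic it extends to an isomorphism of the groupoid $C^*$-algebras. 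The translation statement is then the one-line computation of Equation \eqref{e:translation} carried out fiberwise in $\gamma$, giving $\Fc(g\cdot a)(\phi,\gamma)=\langle\phi,g\rangle\Fc(a)(\phi,\gamma)$. Finally, since $\Fc$ is an algebra isomorphism, $G$ acts by algebra automorphisms on one side exactly when it does on the other, and this happens precisely when the twist $\sigma^{\nu f}$ is translation-invariant; as translation by $g_0$ multiplies $\sigma^{\nu f}$ by the factor $\langle f(\gamma_1,\gamma_2),g_0\rangle$, invariance forces $f=1$, so otherwise the actions are merely by vector-space automorphisms.

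\textbf{Main obstacle.} I expect the real work to be organizational rather than conceptual: fixing all conventions consistently (groupoid inverses, the direction of composition in $G\rtimes_\rho\Gc$ versus $\widehat G\rtimes^f\Gc$, the normalizations of Haar and Plancherel measure, and which variable to integrate out) so that the character-telescoping in Step 2 closes up cleanly; and handling the standard analytic subtlety that the Fourier transform of a $C_c$ function need not be compactly supported, which is dealt with by regarding $\Fc$ as the Gelfand identification $C^*(G\arrows G)\isom C^*(\widehat G\arrows *)$ applied fiberwise over $\Gc_1$ and extended from the dense subalgebras, rather than as a map between spaces of compactly supported functions.
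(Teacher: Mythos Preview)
Your proposal is correct and follows essentially the same approach as the paper: verify that the fibrewise Fourier transform intertwines the two twisted convolutions, then invoke classical Fourier theory for bijectivity and the $G$-action statement. The only organizational difference is that the paper computes $\Fc(a*b)$ by first rewriting $a*b(g,\gamma)$ in terms of the translation and dual-translation operators of Equations \eqref{e:translation}--\eqref{e:dualtralation} (namely as $\int a(g,\gamma_1)\,f_{1,2}\star(\rho_1\cdot b)(g,\gamma_2)\,\nu_{1,2}$) and then applying those rules to land directly in the target algebra, whereas you expand $\Fc(a)*\Fc(b)$ and collapse the $\phi_1$-integral via Plancherel/Dirac; these are the same calculation run in opposite directions, and your observation that the $\langle f(\gamma_1,\gamma_2),\rho(\gamma_1)\rangle^{\pm1}$ factors cancel is exactly the substitution $\phi_2'=\phi_2 f_{1,2}^{-1}$ in the paper's last displayed line.
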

\begin{proof}

The fact that $\Fc$ is an isomorphism of Banach spaces follows because ``fibrewise'' this is classical Fourier transform.  Thus verifying that $\Fc$ is a $C^*$-isomorphism is a matter of seeing that $\Fc$ takes the multiplication on the first algebra into the multiplication on the second.  Let us check.

Set $f_{1,2}=f(\gamma_1,\gamma_2)\in\widehat{G}$, $\nu_{1,2}=\nu(\gamma_1,\gamma_2)\in\Uone$, and $\rho_i=\rho(\gamma_i)\in G$.  The multiplication on $C^*(G\rtimes_\rho\Gc;\sigma^{\nu f})$ is by definition
\begin{align*}
a*b(g,\gamma):
&=\int_{\gamma_1\gamma_2=\gamma} a(g,\gamma_1)b(g\rho_1,\gamma_2)
\nu_{1,2}\langle f_{1,2},g\rangle\\
&=\int_{\gamma_1\gamma_2=\gamma} a(g,\gamma_1)f_{1,2}\star(\rho_1\cdot b)(g,\gamma_2)\nu_{1,2}.
\end{align*}
Pointwise multiplication on $G$ is transformed to convolution on $\widehat{G}$, which we denote by $\hat{*}$.  Group translation and dual group translation behave under the transform according to Equations \eqref{e:translation} and \eqref{e:dualtralation}.  Using these rules, we have
\begin{align*}
\Fc(a*b)(\phi,\gamma)
&=\int_{\stackrel{\phi_1\phi_2=\phi}{\gamma_1\gamma_2=\gamma}}\Fc(a)(\phi_1,\gamma_1)\hat{*}
     \Fc(f_{1,2}\star(\rho_1\cdot b))(\phi_2,\gamma_2)\nu_{1,2}\\
&=\int_{\stackrel{\phi_1\phi_2=\phi}{\gamma_1\gamma_2=\gamma}}\Fc(a)(\phi_1,\gamma_1)
  \Fc(b)(\phi_2f_{1,2}^{-1},\gamma_2)\langle\phi_2f_{1,2}^{-1},\rho_1\rangle\nu_{1,2}\\
&=\int_{\stackrel{\phi_1\phi_2'f_{1,2}=\phi}{\gamma_1\gamma_2=\gamma}}\Fc(a)(\phi_1,\gamma_1)
  \Fc(b)(\phi_2',\gamma_2)\langle\phi_2',\rho_1\rangle\nu_{1,2} ,\\
\end{align*}
and this last line is exactly the multiplication on $C^*(\widehat{G}\rtimes^f\Gc;\tau^{\nu\rho})$.  The statement about $G$-actions is proved by the same computation as for Equation \eqref{e:translation}.
\end{proof}
\begin{defn} A pair of a twisted generalized principal $G$-bundle and twisted $\widehat{G}$-gerbe as in Theorem \eqref{T:pontryagin} are said to be \textbf{Pontryagin dual}.
\end{defn}

This Pontryagin duality is independent of choice of cocycles $\rho$ and $f$ within their cohomology classes, and furthermore, if we alter $\nu$ by a closed 2-cochain (recall $\nu$ itself is not closed) we obtain a new Pontryagin dual pair.  Here is the precise statement of these facts; the proof is a simple computation.
\begin{prop}\label{P:PontCohoInv} Suppose $(G\rtimes_\rho\Gc;\sigma^{\nu f})$ and $(\widehat{G}\rtimes^f\Gc;\tau^{\rho\nu})$ are Pontryagin dual and we are given $\alpha\in C^1(\Gc;\widehat{G})$ and $\beta\in C^0(\Gc;G)$ and $c\in Z^2(\Gc;\Uone)$.  Define
\[ \rho':=\rho\delta\beta\ \text{ and }\ f':=f\delta\alpha. \]
Then $(G\rtimes_{\rho'}\Gc;\sigma^{\nu' f'})$ and $(\widehat{G}\rtimes^{f'}\Gc;\tau^{\rho'\nu'})$ are Pontryagin dual as well, where
\[ \nu'_{1,2}:=c_{1,2}\nu_{1,2}\langle f_{1,2},\beta(s\gamma_2)\rangle^{-1}\langle\alpha_1,\rho'_2\rangle^{-1}. \]
\end{prop}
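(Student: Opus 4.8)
The plan is a direct cocycle computation. Since $\rho'$ and $f'$ differ from the closed cochains $\rho$ and $f$ by the coboundaries $\delta\beta$ and $\delta\alpha$, they automatically lie in $Z^1(\Gc;G)$ and $Z^2(\Gc;\widehat{G})$, so the only thing to check is that $(\rho',f',\nu')$ again satisfies the defining relation of Definition \eqref{D:pontDualityData}, namely $\delta\nu'(\gamma_1,\gamma_2,\gamma_3)=\langle f'(\gamma_1,\gamma_2),\rho'(\gamma_3)^{-1}\rangle$. Once this is established, the conclusion that the new pair is Pontryagin dual costs nothing: it is exactly the output of Theorem \eqref{T:pontryagin} applied to the data $(\rho',f',\nu')$.

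First I would rewrite the definition of $\nu'$ as $\nu'=c\cdot\nu\cdot u^{-1}\cdot v^{-1}$, where $u(\gamma_1,\gamma_2):=\langle f(\gamma_1,\gamma_2),\beta(s\gamma_2)\rangle$ and $v(\gamma_1,\gamma_2):=\langle\alpha(\gamma_1),\rho'(\gamma_2)\rangle$. Because the groupoid differential $\delta$ is a homomorphism in the $\Uone$-coefficients and $\delta c=1$ (as $c\in Z^2(\Gc;\Uone)$), this yields $\delta\nu'=\delta\nu\cdot(\delta u)^{-1}\cdot(\delta v)^{-1}$, reducing the problem to computing the two correction terms $\delta u$ and $\delta v$ and comparing with the hypothesis $\delta\nu=\langle f(\gamma_1,\gamma_2),\rho(\gamma_3)^{-1}\rangle$.

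To compute $\delta u(\gamma_1,\gamma_2,\gamma_3)$ I would expand via the degree-two groupoid differential, then invoke the cocycle identity $\delta f=1$ to collapse the three pairings carrying $\beta(s\gamma_3)$ into the single pairing $\langle f(\gamma_1,\gamma_2),\beta(s\gamma_3)\rangle$; since $s\gamma_2=r\gamma_3$ for a composable triple, what remains is $\langle f(\gamma_1,\gamma_2),\beta(s\gamma_3)\beta(r\gamma_3)^{-1}\rangle=\langle f(\gamma_1,\gamma_2),\delta\beta(\gamma_3)\rangle$. Expanding $\delta v$ similarly, I would use closedness of $\rho'$ to combine the $\rho'(\gamma_2\gamma_3)$ and $\rho'(\gamma_2)$ pairings into $\langle\alpha(\gamma_1),\rho'(\gamma_3)\rangle$, and extract $\delta\alpha(\gamma_1,\gamma_2)$ from the factor $\alpha(\gamma_2)\alpha(\gamma_1\gamma_2)^{-1}$; the two spurious copies of $\langle\alpha(\gamma_1),\rho'(\gamma_3)\rangle^{\pm1}$ then cancel, leaving $\delta v(\gamma_1,\gamma_2,\gamma_3)=\langle\delta\alpha(\gamma_1,\gamma_2),\rho'(\gamma_3)\rangle$.

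Finally I would substitute these into $\delta\nu'=\delta\nu\cdot(\delta u)^{-1}\cdot(\delta v)^{-1}$ and use bilinearity of the pairing together with the definitions $\rho'=\rho\,\delta\beta$ and $f'=f\,\delta\alpha$: the product $\langle f(\gamma_1,\gamma_2),\rho(\gamma_3)^{-1}\rangle\langle f(\gamma_1,\gamma_2),\delta\beta(\gamma_3)^{-1}\rangle$ recombines to $\langle f(\gamma_1,\gamma_2),\rho'(\gamma_3)^{-1}\rangle$, and adjoining $\langle\delta\alpha(\gamma_1,\gamma_2),\rho'(\gamma_3)^{-1}\rangle$ gives exactly $\langle f'(\gamma_1,\gamma_2),\rho'(\gamma_3)^{-1}\rangle$, as required. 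There is no conceptual obstacle; the only points that demand care are the source/range identifications within a composable triple (notably $s\gamma_2=r\gamma_3$ and $s(\gamma_2\gamma_3)=s\gamma_3$) and the bookkeeping of inverses, since the coefficient groups $G$, $\widehat{G}$ and $\Uone$ are all being written multiplicatively.
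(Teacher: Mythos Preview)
Your proposal is correct and is precisely the ``simple computation'' that the paper omits; the paper's own proof consists only of the sentence that the claim follows by direct check, and you have carried out exactly that check by verifying the Pontryagin duality relation $\delta\nu'=\langle f',\rho'^{-1}\rangle$ from $\delta\nu=\langle f,\rho^{-1}\rangle$ together with the coboundary identities for $u$ and $v$.
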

\begin{flushright}$\square$\end{flushright}

Since Pontryagin duality is defined in terms of a Fourier isomorphism, we have the following obvious Morita invariance property:
\begin{prop}  Suppose we are given two sets of Pontryagin duality data $(\Gc,G,\rho,\nu,f)$ and $(\Gc',G,\rho',\nu',f')$. Then
\begin{enumerate}
\item $C^*(G\rtimes_\rho\Gc;\sigma^{\nu f})$ is Morita equivalent to $C^*(G\rtimes_{\rho'}\Gc';\sigma^{\nu' f'})$ if and only if $C^*(\widehat{G}\rtimes^f\Gc;\sigma^{\rho\nu})$ is Morita equivalent to $C^*(\widehat{G}\rtimes^{f'}\Gc';\sigma^{\rho'\nu'})$.
\item In particular, if $G\rtimes_\rho\Gc$ is $G$-equivariantly Morita equivalent to $G\rtimes_{\rho'}\Gc'$ and the twisted groupoid $(G\rtimes_\rho\Gc,\sigma^{\nu f})$ is Morita equivalent to $(G\rtimes_{\rho'}\Gc',\sigma^{\nu' f'})$ (see Theorem \eqref{p:twistedGroupoidMoritaEq} for Morita equivalence of twisted groupoids), then all $C^*$-algebras in $(1)$ are Morita equivalent.  The same is true if the $\widehat{G}$-twisted groupoids $(\Gc,f)$ is Morita equivalent to $(\Gc',f')$ and the twisted groupoid $(\widehat{G}\rtimes^f\Gc,\sigma^{\rho\nu})$ is Morita equivalent to $(\widehat{G}\rtimes^{f'}\Gc',\sigma^{\rho'\nu'})$.
\end{enumerate}.
\end{prop}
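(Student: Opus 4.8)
The plan is to reduce both parts to Theorem \eqref{T:pontryagin} together with the fact that strong Morita equivalence of $C^*$-algebras is an equivalence relation preserved by $*$-isomorphism. First, for part (1), I would observe that Theorem \eqref{T:pontryagin} applies verbatim to \emph{each} of the two given sets of Pontryagin duality data, producing Fourier-type maps
\[
\Fc: C^*(G\rtimes_\rho\Gc;\sigma^{\nu f})\To C^*(\widehat{G}\rtimes^f\Gc;\tau^{\rho\nu}),
\]
\[
\Fc': C^*(G\rtimes_{\rho'}\Gc';\sigma^{\nu' f'})\To C^*(\widehat{G}\rtimes^{f'}\Gc';\tau^{\rho'\nu'}),
\]
each of which is an isomorphism of $C^*$-algebras (Theorem \eqref{T:pontryagin} establishes exactly this, not merely a Banach-space isomorphism). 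After identifying the twisting cocycle written $\sigma^{\rho\nu}$ in the statement with $\tau^{\rho\nu}$ of Theorem \eqref{T:pontryagin}, transitivity of Morita equivalence gives: $C^*(G\rtimes_\rho\Gc;\sigma^{\nu f})\Morita C^*(G\rtimes_{\rho'}\Gc';\sigma^{\nu' f'})$ if and only if $C^*(\widehat{G}\rtimes^f\Gc;\tau^{\rho\nu})\Morita C^*(\widehat{G}\rtimes^{f'}\Gc';\tau^{\rho'\nu'})$, which is part (1). Both implications are the same assertion read in opposite directions, so there is nothing asymmetric to verify.

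For part (2) I would bring in Theorem \eqref{p:twistedGroupoidMoritaEq}, which says a Morita equivalence of twisted groupoids passes to a strong Morita equivalence of the associated twisted groupoid algebras. Under the hypotheses of the first half of (2) --- a $G$-equivariant Morita equivalence $G\rtimes_\rho\Gc\sim G\rtimes_{\rho'}\Gc'$ underlying a Morita equivalence of the twisted groupoids $(G\rtimes_\rho\Gc,\sigma^{\nu f})$ and $(G\rtimes_{\rho'}\Gc',\sigma^{\nu' f'})$ --- this yields a single Morita equivalence $C^*(G\rtimes_\rho\Gc;\sigma^{\nu f})\Morita C^*(G\rtimes_{\rho'}\Gc';\sigma^{\nu' f'})$; splicing in $\Fc$ and $\Fc'$ from part (1) then connects all four $C^*$-algebras in one Morita equivalence class. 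The variant hypothesis, starting instead from a Morita equivalence of the $\widehat{G}$-twisted groupoids $(\Gc,f)$ and $(\Gc',f')$ compatible with the twists on $\widehat{G}\rtimes^f\Gc$ and $\widehat{G}\rtimes^{f'}\Gc'$, is handled symmetrically: apply Theorem \eqref{p:twistedGroupoidMoritaEq} on the $\widehat{G}$-gerbe side and transport back through $\Fc^{-1}$ and $(\Fc')^{-1}$.

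I do not expect a genuine obstacle: the argument is purely formal, layered on top of Theorems \eqref{T:pontryagin} and \eqref{p:twistedGroupoidMoritaEq}. The only points needing care are cosmetic --- reconciling the notations $\sigma^{\rho\nu}$ and $\tau^{\rho\nu}$, and making explicit that the twisted-groupoid Morita equivalence assumed in (2) indeed sits over a $G$-equivariant groupoid Morita equivalence of the underlying generalized principal bundles (this is precisely why the $G$-equivariance hypothesis is imposed), so that Theorem \eqref{p:twistedGroupoidMoritaEq} is legitimately applicable.
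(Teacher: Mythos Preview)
Your proposal is correct and matches the paper's approach: the paper does not give a written proof at all, simply prefacing the proposition with ``Since Pontryagin duality is defined in terms of a Fourier isomorphism, we have the following obvious Morita invariance property,'' and your argument is precisely the unpacking of that sentence via Theorem~\ref{T:pontryagin} and (for part~(2)) Theorem~\ref{p:twistedGroupoidMoritaEq}. One small correction: your closing remark that the $G$-equivariance hypothesis is needed ``so that Theorem~\ref{p:twistedGroupoidMoritaEq} is legitimately applicable'' is not quite right---that theorem requires no equivariance---but this does not affect the validity of your argument.
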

Note that we have not claimed that a Morita equivalence at groupoid level produces a Morita equivalence of Pontryagin dual groupoids.  Though there is often such a correspondence, it is not clear that there is always one.

Here are a couple of important examples of Pontryagin duality:
\begin{example}\label{ex:zuoneduality} Pontryagin duality actually shows that any twisted groupoid algebra is a $C^*$-subalgebra of the (untwisted) groupoid algebra of a $\Uone$-gerbe.  Using the notation of Theorem \eqref{T:pontryagin}, set $\rho=\nu=1$,\ $G=\Zb$, and $\widehat{G}=\Uone$.  Then the twisted groupoid $((\Zb\rtimes_{1}\Gc),\sigma^f)$ (this is a trivial $\Zb$-bundle with twisting $\sigma^{f}$) is Pontryagin dual to the gerbe $(\Uone\rtimes^f\Gc)$.  Explicitly,
\[\sigma^f((n,\gamma_1),(n,\gamma_2))=f(\gamma_1,\gamma_2)^n,\ \text{ for }((n,\gamma_1),(n,\gamma_2))\in(\Zb\rtimes_{1}\Gc)_2,\ f\in Z^2(
\Gc;\Uone) \]
But the functions in $C^*(\Uone\rtimes^f\Gc)\simeq C^*(\Zb\rtimes_1\Gc,\sigma^f)$ with support in $\{\ 1\}\rtimes_1\Gc$ clearly form a $C^*$-subalgebra identical to $C^*(\Gc;f)$.
\end{example}
\begin{example} Again in the notation of Theorem \eqref{T:pontryagin}, the case $\nu=f=1$ shows that a generalized principal bundle $G\rtimes_\rho\Gc$ is Pontryagin dual to the twisted groupoid $(\widehat{G}\rtimes^1\Gc,\tau^\rho)$.  (The latter object is the trivial $\widehat{G}$-gerbe on $\Gc$, with twisting $\tau^\rho$.)  One might wonder if this duality can be expressed purely in terms of gerbes.  The answer is that it cannot.  More precisely, this duality does not extend via the association
\[ (\text{twisted groupoids})\ \longleftrightarrow\ (\Uone\text{-gerbes}) \]
to a duality between $\Uone$-gerbes that is implemented by Fourier isomorphism of groupoid algebras.  Indeed, the gerbe associated to right side is $\Uone\rtimes^{\tau}(\widehat{G}\rtimes^1\Gc)$\ (here $\tau=\tau^\rho$), but the extension on the left side corresponding (in the sense of having a Fourier isomorphic $C^*$-algebra) to that gerbe is $G\rtimes_\chi(\Zb\rtimes_1\Gc)$, where $\chi(n,\gamma):=\rho(\gamma)^n$, which cannot be written as a $\Uone$-gerbe.  The groupoid $G\rtimes_\chi(\Zb\rtimes_1\Gc)$ actually corresponds to the disjoint union of the tensor powers of the generalized principal bundle.
\end{example}
Pontryagin duality will be used to explain ``classical'' T-duality, in Section \eqref{S:classicalT-duality}.  Before we get to T-duality, however, we need the tools to compare Morita equivalent twisted groupoids, and we need to know some specific Morita equivalences.  The development of these tools is the subject of the next two sections.

\section{Twisted Morita equivalence}\label{S:twistedMoritaEquivalence}

Let $\Hc$ and $\Kc$ be groupoids.  A Morita $(\Hc$-$\Kc)$-bimodule $P$ determines a way to compare the groupoid cohomology of $\Hc$ with that of $\Kc$.  More specifically, there is a double complex $C^{ij}(P)$ associated to the bimodule such that the moment maps
\[ \Hc_0\leftarrow P\rightarrow \Kc_0 \]
induce augmentations by the groupoid cohomology complexes
\begin{equation}\label{eq:complexinclusions}
C^i(\Hc;M)\hookrightarrow C^{i\bullet}(P;M)\ \text{ and }\ C^{\bullet j}(P;M)\hookleftarrow C^j(\Kc;M).
\end{equation}
We say groupoid cocycles $c\in C^n(\Hc;M)$ and $c'\in C^n(\Kc;M)$ are \textbf{cohomologous} when their images in $C(P;M)$ are cohomologous.  We assume for simplicity that the coefficients $M$ are constant and have the trivial actions of $\Hc$ and $\Kc$.

The double complex is defined as follows.
\[ C^{ij}(P):=(C(\Hc_j\times_{\Hc_0} P\times_{\Kc_0}\Kc_i;M);\ \delta^\Hc,\ \delta^\Kc) \]
The differentials ($\delta^\Hc:C^{ij}\to C^{ij+1}$) and ($\delta^\Kc:C^{ij}\to C^{i+1\ j}$) are given, for $f\in C^{ij}$, by the formulas

$\delta^\Hc f(h_1,\dots,h_{j+1},p,k's):=$
\begin{align*}
&f(h_2,\dots,h_{j+1},p,k's)+\sum_{n=1\dots j}(-1)^{n}f(\dots,h_nh_{n+1},\dots,p,k's)\\
       &\hspace{1.3in}+(-1)^jf(h_1,\dots,h_j,h_{j+1}p,k's)
\end{align*}

$\delta^\Kc f(h's,p,h_1\dots,h_{i+1}):=$
\begin{align*}
&f(h's,pk_1,k_2,\dots,k_i)+\sum_{n=1\dots i}(-1)^{n}f(h's,p,\dots,k_nk_{n+1},\dots)\\
       &\hspace{1.3in}+(-1)^kf(h's,p,k_1,\dots,k_i).
\end{align*}

Our main reason for comparing cohomology of Morita equivalent groupoids is the following theorem.  The first statement of the theorem is a classic statement about (stack theoretic) gerbes, but we will reproduce it for convenience.

\begin{thm}\label{p:twistedGroupoidMoritaEq} Let $P$ be a Morita equivalence $(\Hc$-$\Kc)$-bimodule, let $M$ be an Abelian group acted upon trivially by the two groupoids, and suppose we are given 2-cocycles
\[ \psi\in Z^2(\Hc;M)\text{ and }\chi\in Z^2(\Kc;M) \]
whose images $\tilde{\psi}$ and $\tilde{\chi}$ in the double complex of the Morita equivalence are cohomologous.  Then
\begin{enumerate}
\item The $M$-gerbes $M\rtimes^\psi\Hc$ and $M\rtimes^\chi\Kc$ are Morita equivalent.
\item For any group homomorphism $\phi:M\to N$, the $N$-gerbes $N\rtimes^{\phi\circ\psi}\Hc$ and $N\rtimes^{\phi\circ\chi}\Kc$ are Morita equivalent.
\item For any group homomorphism $\phi:M\to\Uone$, the twisted $C^*$-algebras $C^*(\Hc;\phi\circ\psi)$ and $C^*(\Kc;\phi\circ\chi)$ are Morita equivalent.
\end{enumerate}
\end{thm}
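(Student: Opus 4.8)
The plan is to construct the Morita equivalence of (1) by hand, using the ``homotopy'' that witnesses the cohomologousness to twist an explicit bimodule, then to obtain (2) by functoriality of the double complex, and (3) by feeding the result into a twisted version of the Muhly--Renault--Williams construction.

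For (1), I would begin by unwinding the hypothesis. That $\tilde\psi\in C^{0,2}(P)$ and $\tilde\chi\in C^{2,0}(P)$ become cohomologous in the total complex of $C^{\bullet\bullet}(P)$ means there is a total-degree-one cochain $\eta=(\eta^\Hc,\eta^\Kc)$, with $\eta^\Hc\in C(\Hc_1\times_{\Hc_0}P;M)$ and $\eta^\Kc\in C(P\times_{\Kc_0}\Kc_1;M)$, such that $D\eta=\tilde\psi-\tilde\chi$; examining this equation in each bidegree gives the system $\delta^\Hc\eta^\Hc=\tilde\psi$ in $C^{0,2}(P)$, $\delta^\Hc\eta^\Kc\pm\delta^\Kc\eta^\Hc=0$ in $C^{1,1}(P)$, and $\delta^\Kc\eta^\Kc=\pm\tilde\chi$ in $C^{2,0}(P)$. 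I would then put a Morita $(M\rtimes^\psi\Hc)$-$(M\rtimes^\chi\Kc)$-bimodule structure on $Q:=M\times P$: the two moment maps are those of $P$ composed with the projection $Q\to P$, and the left and right actions are those of $P$ corrected in the $M$-coordinate by $\eta$, of the schematic form $(m,h)\cdot(m',p):=(m\,m'\,\eta^\Hc(h,p),\,hp)$ and $(m',p)\cdot(n,k):=(m'\,n\,\eta^\Kc(p,k),\,pk)$ ($M$ written multiplicatively; the fibre-product conditions are exactly those defining $\Hc_1\times_{\Hc_0}P$ and $P\times_{\Kc_0}\Kc_1$). The crux is that the three displayed equations are, respectively, the associativity of the left action, the commutation of the two actions, and the associativity of the right action. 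Freeness, properness, and the two quotient identifications required by Definition~\eqref{D:groupoidME} are inherited from $P$ without change, since $M$ carries only the free proper translation action of the group $M$ on itself and is invisible to the moment maps; the local compactness, second countability and Haar-system hypotheses pass to $Q=M\times P$ as well. Thus $Q$ is a Morita equivalence bimodule, proving (1). (Equivalently one may assemble $\psi$, $\chi$ and $\eta$ into a single $M$-valued $2$-cocycle on the linking groupoid of $P$ and invoke linking-groupoid Morita equivalence; the double complex $C^{\bullet\bullet}(P)$ is precisely what controls the existence of such an extension.)

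For (2), a homomorphism $\phi:M\to N$ induces a morphism of double complexes $C^{\bullet\bullet}(P;M)\to C^{\bullet\bullet}(P;N)$ commuting with $\delta^\Hc$ and $\delta^\Kc$, so $\phi\circ\eta$ exhibits the images of $\phi\circ\psi$ and $\phi\circ\chi$ as cohomologous, and running the construction of (1) with the pushed-forward data $(N,\phi\circ\psi,\phi\circ\chi,\phi\circ\eta)$ gives $N\rtimes^{\phi\circ\psi}\Hc\sim N\rtimes^{\phi\circ\chi}\Kc$. For (3), which is the twisted counterpart of Proposition~\eqref{P:moritaFacts}(1), I would take $N=\Uone$ in (2) and then produce the required $C^*(\Hc;\phi\circ\psi)$-$C^*(\Kc;\phi\circ\chi)$-imprimitivity bimodule as a completion of $C_c(P)$, with left and right actions and inner products given by the standard Muhly--Renault--Williams formulas with the correction cochains $\phi\circ\eta^\Hc$ and $\phi\circ\eta^\Kc$ inserted; checking the imprimitivity-bimodule axioms is the content of the twisted generalization of \cite{MRW} recorded in Lemma~\eqref{L:appendixLemma1}.

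I expect the real work to be concentrated in (1) (together with the twisted \cite{MRW} lemma invoked for (3)), and there the obstacle is bookkeeping rather than ideas: one must reconcile the sign conventions of the differentials $\delta^\Hc,\delta^\Kc$ with those of the groupoid multiplications in $M\rtimes^\psi\Hc$ and $M\rtimes^\chi\Kc$ so that the three bidegree components of $D\eta=\tilde\psi-\tilde\chi$ match, precisely, the associativity and commutation of the two $Q$-actions. Once the conventions are pinned down the module axioms are a routine verification, and (2) then requires nothing further.
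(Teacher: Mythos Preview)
Your argument for (1) and (2) is exactly the paper's: the cochain $(\eta^\Hc,\eta^\Kc)$ is the paper's $(\mu,\nu)$, the bimodule is $M\times P$ with the same twisted actions, and the three bidegree components of $D\eta=\tilde\psi-\tilde\chi$ are identified with left associativity, commutation, and right associativity precisely as you say; (2) is then functoriality.

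For (3) you take a different route from the paper. You propose the direct twisted Muhly--Renault--Williams construction on $C_c(P)$. The paper instead avoids redoing any $C^*$-bimodule analysis: it takes the Morita equivalence of the untwisted groupoid algebras $C^*(M\rtimes^\psi\Hc)\sim C^*(M\rtimes^\chi\Kc)$ already supplied by (1) and \cite{MRW}, transports it across Pontryagin duality to a Morita equivalence $C^*(\widehat{M}\rtimes_1\Hc,\sigma^\psi)\sim C^*(\widehat{M}\rtimes_1\Kc,\sigma^\chi)$ (the bimodule being a completion of $C_c(\widehat{M}\times P)$), and then evaluates at $\phi\in\widehat{M}$ to produce a whole $\widehat{M}$-family of Morita equivalences $C^*(\Hc;\phi\circ\psi)\sim C^*(\Kc;\phi\circ\chi)$. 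The paper explicitly remarks that your direct approach would also work (``can be proved directly by exhibiting a $C^*$-algebra bimodule, but this will not be necessary''), so both are valid; the paper's trick trades the imprimitivity-bimodule verification for an appeal to Fourier transform and evaluation, which is slicker given the Pontryagin machinery already developed. One small correction: Lemma~\ref{L:appendixLemma1} concerns $\Gc$-$C^*$-algebras (bundles of algebras) rather than $\Uone$-twisted convolution algebras, so it is not quite the lemma you want to cite for the twisted MRW statement, though the argument you have in mind is certainly of that flavor.
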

For example when $M=\Uone$ with $\Hc$ and $\Kc$ acting trivially, the representations of $M$ are identified with the integers, $(u\mapsto u^n)$ and the proposition implies that the ``gerbes of \emph{weight n}'', $C^*(\Hc;\psi^n)$ and $C^*(\Kc;\chi^n)$, are Morita equivalent.
\begin{proof}  We begin by proving the statement about $M$-gerbes.
The idea is that a cocycle $(\mu,\nu^{-1})\in C^{1,0}\times C^{0,1}$ satisfying
\begin{equation}\label{eq:munuboundary}
D(\mu,\nu^{-1}):=(\delta^\Hc\mu,\delta^\Kc\mu\delta^\Hc\nu,
\delta^\Kc\nu^{-1})=(\tilde{\psi},0,\tilde{\chi^{-1}})\in C^{2,0}\times C^{1,1}\times C^{0,2}\end{equation}
provides exactly the data needed to form a Morita $(M\rtimes_\psi\Hc)-(M\rtimes_\chi\Kc)$-bimodule structure on $M\times P$.  Indeed, define for $m's\in M$, $h's\in\Hc$, $k's\in\Kc$, and $p's\in P$,
\begin{enumerate}
\item Left multiplication of $M\rtimes_\psi \Hc$: $(m_1,h)*(m_2,p):=(m_1m_2\mu(h,p),hp)$
\item Right multiplication of $M\rtimes_\chi \Kc\arrows\Kc^0$: $(m_1,p)*(m_2,k):=(m_1m_2\nu(p,k),pk)$.
\end{enumerate}
Then
\begin{align*}
&\delta^\Hc\mu=\tilde{\psi}&\Leftrightarrow&\text{ the left multiplication is homomorphic }\\
&\delta^\Kc\nu=\tilde{\chi}&\Leftrightarrow&\text{ the right multiplication is homomorphic }\\
&\delta^\Kc\mu=\delta^\Kc\nu^{-1}&\Leftrightarrow&\text{ the left and right multiplications commute. }
\end{align*}
For example the equality
\[
\delta^\Kc\nu(p,k_1,k_2):=\nu(pk_1,k_2)\nu(p,k_1k_2)\nu(p,k_1)^{-1}=\tilde{\chi}(p,k_1,k_2)=:\chi(k_1,k_2)
\]
holds if and only if
\begin{align*}
((m,p)*(m_1,k_1))*(m_2,k_2)&=(mm_1m_2\nu(p,k_1)\nu(pk_1,k_2),pk_1k_2)\\
   &=(m(m_1m_2\chi(k_1,k_2))\nu(p,k_1k_2),pk_1k_2)\\
   &=(m,p)*((m_1,k_1)*(m_2,k_2)).
\end{align*}
Now we will check that the right action is principal and that the orbit space $(M\times P)\slash(M\rtimes_\chi\Kc)$
is isomorphic to $\Hc^0$.

Suppose $(m,p)*(m_1,k_1)=(m,p)*(m_2,k_2).$  Then $k_1=k_2$ since the action of $\Kc$ is principal, and then $m_1=m_2$ is forced, so the action is principal.
Next, the equation $(m,p)*(m_1\nu(p,k)^{-1},k)=(mm_1,pk)$ makes it clear that the orbit space $(M\times P)\slash(M\rtimes_\chi\Kc)$ is the same as the orbit space $P\slash\Kc$, which is $\Hc^0$.

The situation is obviously symmetric, so the left action satisfies the analogous properties, and thus the first statement of the proposition is proved.

The second statement now follows immediately because whenever Equation \eqref{eq:munuboundary} is satisfied for the quadruple $(\mu,\nu,\psi,\chi)$ in $C(P;M)$, it is also satisfied for $(\phi\circ\mu,\phi\circ\nu,\phi\circ\psi,\phi\circ\chi)$ in $C(P;N)$.  In other words $\phi\circ\psi$ and $\phi\circ\chi$ are cohomologous.

The third statement of the theorem can be proved directly by exhibiting a $C^*$-algebra bimodule, but this will not be necessary because we already have a Morita $C^*(M\rtimes^\psi\Hc)$-$C^*(M\rtimes^\chi\Kc)$-bimodule, coming from Proposition \eqref{P:moritaFacts} and the fact that $M\rtimes^\psi\Hc$ and $M\rtimes^\chi\Kc$ are Morita equivalent.  We will manipulate this bimodule into a $C^*(\Hc,\phi\circ\psi)$-$C^*(\Kc,\phi\circ\chi)$-bimodule.

Note that $(\widehat{M}\rtimes_1\Hc,\sigma^\psi)$ is Pontryagin dual to $(M\rtimes^\psi\Hc)$ and $(\widehat{M}\rtimes_1\Kc,\sigma^\chi)$ is Pontryagin dual to $(M\rtimes^\chi\Kc)$, thus the associated $C^*$-algebras are pairwise Fourier isomorphic.  Then the Morita equivalence bimodule (which is a completion of $C_c(M\times P)$, where  $P$ is the $\Hc$-$\Kc$-bimodule) for $C^*(M\rtimes^\psi\Hc)$ and $C^*(M\rtimes^\chi\Kc)$ is taken via the Fourier isomorphism to a Morita equivalence bimodule between $C^*(\widehat{M}\rtimes_1\Hc,\sigma^\psi)$ and $C^*(\widehat{M}\rtimes_1\Kc,\sigma^\chi)$.  This Fourier transformed bimodule will be a completion $X$ of $C_c(\widehat{M}\times P)$.  Then for $\phi\in\widehat{M}$, evaluation at $\phi$ determines projections
\[ \ev_\phi:C^*(\widehat{M}\rtimes_1\Hc,\sigma^\psi)\to C^*(\Hc,\phi\circ\psi), \]
\[ \ev_\phi:C^*(\widehat{M}\rtimes_1\Kc,\sigma^\chi)\to C^*(\Kc,\phi\circ\chi), \]
\[ \text{ and }\ev_\phi:C_c(\widehat{M}\times P)\to C_c(P) \]
that are compatible with the bimodule structure of $X$, so $\ev_\phi(X)$ is automatically a Morita equivalence $C^*(\Hc,\phi\circ\psi)$-$C^*(\Kc,\phi\circ\chi)$-bimodule.  Thus $X$ is actually a family of Morita equivalences parameterized by $\phi\in\widehat{M}$, and in particular the third statement of the theorem is true.
\end{proof}
\begin{rem}Just as a Morita equivalence lets you compare cohomology, a $G$-equivalence lets you compare $G$-equivariant cohomology.  Indeed, all complexes involved will have the commuting $G$-actions, and there will be an associated tricomplex which is coaugmented by the complexes computing equivariant cohomology of the two groupoids.  Rather than using the tricomplex, however, one can map the equivariant complexes into the complexes of the crossed product groupoids (as in Equation \eqref{Eq:equicohoCrossedCoho}) and do the comparing there.  The result is the same but somewhat less tedious to compute.
\end{rem}

\section{Some facts about groupoid cohomology}\label{S:cohoFacts}

Now there is good motivation for wanting to know when groupoid cocycles are cohomologous.  In this section we will collect some facts that help in that pursuit.

The Morita equivalences that come from actual groupoid homomorphisms have nice properties with respect to cohomology. They are called essential equivalences.
\begin{defn}\cite{C} Let $\phi:\Gc\to\Hc$ be a morphism of topological groupoids (i.e. a continuous functor).  This morphism determines a right principal $\Gc$-$\Hc$-bimodule $P_\phi:=\Gc_0\times_{\Hc_0}\Hc_1$.  If $P_\phi$ is a Morita equivalence bimodule (which follows if it is left principal) then $\phi$ is called an {\em essential equivalence.}
\end{defn}
Note that for any morphism $\phi$ the left moment map $P_\phi\to\Gc_0$ has a canonical section.
\begin{prop}\label{P:nicegroupoidFacts} Let $\Gc$ and $\Hc$ be topological groupoids, and suppose $P$ is a Morita equivalence $\Gc$-$\Hc$-bimodule with left and right moment maps $\Gc_0\stackrel{\ell}{\leftarrow}P\stackrel{\rho}{\rightarrow}\Hc_0$.  Then:
\begin{enumerate}
\item $P$ is equivariantly isomorphic to $P_\phi$ for some essential equivalence $\phi:\Gc\to\Hc$ if and only if the left moment map $\ell:P\to\Gc_0$ admits a continuous section.
\item Any morphism $\phi:\Gc\to\Hc$ determines, via pullback, a chain morphism $\phi^*:C^\bullet(\Hc)\to C^\bullet(\Gc)$.
\item The moment maps $\ell$ and $\rho$ determine chain morphisms
\[ C^\bullet(\Gc)\stackrel{\ell^*}{\longrightarrow}\tot(C^{\bullet\bullet}(P))
   \stackrel{\rho^*}{\longleftarrow}C^\bullet(\Hc). \]
\item Any continuous section of $\ell:P\to\Gc_0$ determines a contraction of the coaugmented complex $C^k(\Gc)\to C^{\bullet k}(P)$ for each $k$, and thus a quasi-inverse $[\ \ell^*]^{-1}:H^*(\tot(C(P))\to H^*(\Gc)$, and thus a homomorphism
\[ [\ \ell^*]^{-1}\circ[\ \rho^*]:H^*(\Hc)\to H^*(\Gc). \]
\item The two chain morphisms $\ell^*\circ\phi^*$ and $\rho^*$ are homotopic; in particular
\[ [\ \phi^*]=[\ \ell^*]^{-1}\circ[\ \rho^*].   \]
\end{enumerate}
\end{prop}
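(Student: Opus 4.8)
The plan is to treat (1)--(3) as direct constructions and bookkeeping and to locate the real content in (4) and (5), both of which are instances of the ``extra degeneracy'' principle: a continuous section of a principal bundle makes the associated bar complex contractible, with an explicit contracting homotopy. For (5) I would first invoke (1) to reduce to the case $P=P_\phi$. For (1), one implication is the remark preceding the proposition: $P_\phi=\Gc_0\times_{\Hc_0}\Hc_1$ carries the canonical section $x\mapsto(x,1_{\phi_0(x)})$ of its left moment map. For the converse, given a continuous section $\psi$ of $\ell$, I would set $\phi_0:=\rho\circ\psi$ and let $\phi_1(\gamma)$ be the unique $\Hc$-arrow carrying $\gamma\cdot\psi(s\gamma)$ to $\psi(r\gamma)$ inside the free, transitive $\Hc$-orbit that is the fibre of $\ell$ over $r\gamma$; continuity of $\gamma\mapsto\phi_1(\gamma)$ follows from continuity of the division map of the free and proper $\Hc$-action, and one then checks that $\phi$ is a continuous functor and induces a $\Gc$-$\Hc$-equivariant isomorphism $P_\phi\isom P$. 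Part (2) is routine: since $\phi$ carries composable tuples to composable tuples and units to units, $\phi^*f:=f\circ\phi_\bullet$ respects the normalized cochain complexes, and $\delta\circ\phi^*=\phi^*\circ\delta$ holds termwise by functoriality (with trivial coefficients the leading term of $\delta$ causes no trouble).

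For (3), write $C^{ij}(P)$ for the double complex of Section \eqref{S:twistedMoritaEquivalence}, with $i$ counting $\Hc$-arrows, $j$ counting $\Gc$-arrows, and differentials $\delta^\Hc\colon C^{ij}\to C^{i+1,j}$, $\delta^\Gc\colon C^{ij}\to C^{i,j+1}$. I would define $\ell^*\colon C^j(\Gc)\to C^{0j}(P)$ by pullback along the projection that forgets the $P$-coordinate, and $\rho^*\colon C^i(\Hc)\to C^{i0}(P)$ similarly. A glance at the explicit differentials then shows $\delta^\Hc\circ\ell^*=0$ and $\delta^\Gc\circ\ell^*=\ell^*\circ\delta$ (the only nontrivial term, involving the $\Gc$-action on $P$, matches the last term of $\delta$ because $\ell^*f$ ignores the $P$-coordinate), and symmetrically for $\rho^*$; hence both are chain maps into $\tot C(P)$.

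For (4), fix $k$ and look at the coaugmented complex $C^k(\Gc)\stackrel{\ell^*}{\to}\big(C^{\bullet k}(P),\delta^\Hc\big)$. Forgetting the inert $\Gc_k$-parameter, this is the continuous-cochain complex of the simplicial space $\cdots\rra P\times_{\Hc_0}\Hc_1\rra P$ of the right principal $\Hc$-action on $P$, coaugmented by $C(P/\Hc)=C(\Gc_0)$. A continuous section $\psi$ of $\ell=(P\to P/\Hc)$ supplies an extra degeneracy $(p,\eta_1,\dots)\mapsto(\psi(\ell p),\kappa(p),\eta_1,\dots)$, where $\kappa(p)\in\Hc_1$ is the unique arrow with $\psi(\ell p)\cdot\kappa(p)=p$; dualized, this is an explicit contracting homotopy, so the coaugmented complex is exact for every $k$. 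By the standard homological lemma that a map of double complexes which is a quasi-isomorphism on every column is a quasi-isomorphism, $\ell^*\colon C^\bullet(\Gc)\to\tot C(P)$ is a quasi-isomorphism; I take $[\ell^*]^{-1}$ to be its inverse on cohomology and compose with $[\rho^*]$.

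For (5), by (1) we may take $P=P_\phi$ with canonical section $\psi(x)=(x,1_{\phi_0 x})$, so that $\rho\circ\psi=\phi_0$ and $\psi$ intertwines $\Gc$-translation with $\Hc$-translation through $\phi$. Under the identification $\Gc_q\times_{\Gc_0}P_\phi\times_{\Hc_0}\Hc_p\isom\{(\gamma_1,\dots,\gamma_q;\eta_0,\dots,\eta_p):\ (\eta_0,\dots,\eta_p)\in\Hc_{p+1},\ r\eta_0=\phi_0(s\gamma_q)\}$, I would write down the prism operator $s=\bigoplus_{p,q}s_{p,q}$ with $s_{p,q}\colon C^{p+q+1}(\Hc)\to C^{p,q}(P_\phi)$ given, up to the usual signs, by $s_{p,q}f(\gamma_1,\dots,\gamma_q;\eta_0,\dots,\eta_p):=f(\phi\gamma_1,\dots,\phi\gamma_q,\eta_0,\dots,\eta_p)$; the composability condition $s(\phi\gamma_q)=r\eta_0$ is exactly the constraint above, so the right-hand side makes sense and is automatically normalized. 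The remaining task, and the step I expect to be the main obstacle, is the telescoping verification that $Ds+sD=\ell^*\phi^*-\rho^*$, whose two surviving end terms are precisely $\ell^*\phi^*f$ (in bidegree $(0,n)$) and $\rho^*f$ (in bidegree $(n,0)$); this is a mechanical but lengthy manipulation of the coface identities. Granting it, $[\ell^*\phi^*]=[\rho^*]$, and applying $[\ell^*]^{-1}$ yields $[\phi^*]=[\ell^*]^{-1}\circ[\rho^*]$. The other delicate point is the continuity of the division map underlying $\kappa$ in (4): this is exactly where the standing local compactness, Hausdorffness and second countability hypotheses are used, even though the conclusion only concerns cohomology; everything else is formal.
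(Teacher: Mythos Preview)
Your proposal is correct and follows essentially the same approach as the paper, which simply declares (1) an easy exercise, asserts (2)--(3) need no proof, and defers the homotopies for (4) and (5) to Crainic \cite{C}; you have reconstructed precisely those standard ``extra degeneracy'' and prism homotopies in detail. One small correction: the continuity of the division map $\kappa$ in (4) comes from freeness and properness of the right $\Hc$-action (so that $(p,h)\mapsto(ph,p)$ is a closed embedding), not from second countability.
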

\begin{proof}
The first statement is an easy exercise and the next two statements do not require proof.  The homotopy for the fourth statement is in the proof of Lemma 1 \cite{C} and the homotopy for the fifth statement is written on page (8) of \cite{C}.
\end{proof}

\begin{cor}\label{C:fundamental} Suppose that $\phi:\Gc\to\Hc$ is an essential equivalence, that $M$ is a locally compact abelian group viewed as a trivial module over both groupoids, and that $\chi\in Z^2(\Hc;M)$ is a 2-cocycle.  Then $\chi$ and $\phi^*\chi$ satisfy the conditions of Theorem \eqref{p:twistedGroupoidMoritaEq}; in particular $M\rtimes^\chi\Hc$ is Morita equivalent to $M\rtimes^{\phi^*\chi}\Gc$.
\end{cor}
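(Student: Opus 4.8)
The plan is to deduce the statement directly from Theorem~\eqref{p:twistedGroupoidMoritaEq} and Proposition~\eqref{P:nicegroupoidFacts}, so that almost no new construction is needed: the work is a matter of identifying objects. First I would record the relevant bimodule. Since $\phi$ is an essential equivalence, $P_\phi=\Gc_0\times_{\Hc_0}\Hc_1$ is a Morita equivalence bimodule for the left $\Gc$-action and the right $\Hc$-action, with left moment map $\ell\colon P_\phi\to\Gc_0$ and right moment map $\rho\colon P_\phi\to\Hc_0$, and by the remark following the definition of essential equivalence the map $\ell$ carries a canonical continuous section. The goal is then to check that the pair of $2$-cocycles $\phi^*\chi\in Z^2(\Gc;M)$ and $\chi\in Z^2(\Hc;M)$ meets the hypothesis of Theorem~\eqref{p:twistedGroupoidMoritaEq} relative to $P_\phi$ — namely, that their images in the total complex $\tot(C^{\bullet\bullet}(P_\phi))$ of the Morita equivalence are cohomologous — because once that is established the theorem immediately yields the Morita equivalence of $M\rtimes^{\phi^*\chi}\Gc$ with $M\rtimes^\chi\Hc$ (and parts (2) and (3) for free). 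Here I would also note that $M$ is constant with trivial $\Gc$- and $\Hc$-actions, which is exactly the standing hypothesis under which the double complex of Section~\eqref{S:twistedMoritaEquivalence} and Theorem~\eqref{p:twistedGroupoidMoritaEq} are stated, so nothing extra is needed on that front.

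Next I would use part~(3) of Proposition~\eqref{P:nicegroupoidFacts} to name those images: the augmentations of the double complex by the two groupoid cohomology complexes are exactly the moment-map pullbacks $\ell^*\colon C^\bullet(\Gc)\to\tot(C^{\bullet\bullet}(P_\phi))$ and $\rho^*\colon C^\bullet(\Hc)\to\tot(C^{\bullet\bullet}(P_\phi))$, so the image of $\phi^*\chi$ is $\ell^*(\phi^*\chi)$ and the image of $\chi$ is $\rho^*(\chi)$. The key input is part~(5) of Proposition~\eqref{P:nicegroupoidFacts}: because $\ell$ admits a continuous section, the chain maps $\ell^*\circ\phi^*$ and $\rho^*$ from $C^\bullet(\Hc)$ to $\tot(C^{\bullet\bullet}(P_\phi))$ are chain homotopic. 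Evaluating a chain homotopy on the $D$-closed cocycle $\chi$ produces an element $\xi\in\tot^1(C^{\bullet\bullet}(P_\phi))=C^{1,0}(P_\phi)\oplus C^{0,1}(P_\phi)$ with $\ell^*(\phi^*\chi)-\rho^*(\chi)=D\xi$; writing $\xi=(\mu,\nu^{-1})$ this is precisely the relation~\eqref{eq:munuboundary} that the proof of Theorem~\eqref{p:twistedGroupoidMoritaEq} asks for, so the hypothesis is verified and the theorem applies.

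I do not expect a genuine obstacle here; the argument is essentially a reassembly of Proposition~\eqref{P:nicegroupoidFacts} and Theorem~\eqref{p:twistedGroupoidMoritaEq}. The one place that demands care is the convention-matching at the very end: I would need to check that the three components of $D\xi$ along $C^{2,0}\oplus C^{1,1}\oplus C^{0,2}$ really reproduce the triple $(\widetilde{\phi^*\chi},\,0,\,\widetilde{\chi}^{-1})$ of Equation~\eqref{eq:munuboundary}, once the additive-versus-multiplicative bookkeeping and the signs in the total differential $D$ are tracked consistently — a routine but slightly fiddly verification. If one prefers to avoid even that, an alternative is to write down the Morita $(M\rtimes^{\phi^*\chi}\Gc)$-$(M\rtimes^\chi\Hc)$-bimodule structure on $M\times P_\phi$ by hand, using the canonical section of $\ell$ to produce the cochains $\mu$ and $\nu$ explicitly; but invoking part~(5) of Proposition~\eqref{P:nicegroupoidFacts} is cleaner.
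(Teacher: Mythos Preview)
Your approach is essentially identical to the paper's own proof: identify the images of $\chi$ and $\phi^*\chi$ in the double complex of $P_\phi$ as $\rho^*\chi$ and $\ell^*(\phi^*\chi)$, invoke part~(5) of Proposition~\eqref{P:nicegroupoidFacts} to conclude they are cohomologous, and then apply Theorem~\eqref{p:twistedGroupoidMoritaEq}. The paper's version is a one-line compression of exactly this; your additional remarks about unpacking the homotopy into the $(\mu,\nu^{-1})$ of Equation~\eqref{eq:munuboundary} are correct elaboration but not a different argument.
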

\begin{proof}  Use the notation of Proposition \eqref{P:nicegroupoidFacts}.  The images of $\chi$ and $\phi^*\chi$ in $C^{\bullet\bullet}(P)$ are $\ell^*\circ\phi^*\chi$ and $\rho^*\chi$ respectively, which are cohomologous by statement $(5)$, and these are precisely the conditions of Theorem \eqref{p:twistedGroupoidMoritaEq}.
\end{proof}
\begin{cor}\label{C:fundamental2} If $\phi:\Gc\to\Hc$ is an essential equivalence such that the right moment map $P_\phi\stackrel{\rho}{\to}\Hc_0$ admits a section then $[\phi^*]:H^*(\Hc)\to H^*(\Gc)$ is an isomorphism.
\end{cor}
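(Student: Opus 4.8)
\emph{Proof proposal.} The plan is to read the statement off of parts (4) and (5) of Proposition~\ref{P:nicegroupoidFacts}, after noting that those parts are symmetric in the two moment maps of $P_\phi$. First I would recall that, $\phi$ being an essential equivalence, $P_\phi=\Gc_0\times_{\Hc_0}\Hc_1$ is a Morita equivalence $(\Gc$-$\Hc)$-bimodule whose left moment map $\ell\colon P_\phi\to\Gc_0$ carries the canonical continuous section $x\mapsto(x,\mathrm{id}_{\phi(x)})$ pointed out just before Proposition~\ref{P:nicegroupoidFacts}. Thus part (4) applies and produces a quasi-inverse $[\,\ell^*\,]^{-1}$ of $[\,\ell^*\,]$; in particular $[\,\ell^*\,]\colon H^*(\Gc)\to H^*(\tot C^{\bullet\bullet}(P_\phi))$ is an isomorphism.

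Next I would use the hypothesis, which supplies a continuous section of the right moment map $\rho\colon P_\phi\to\Hc_0$. The double complex $C^{\bullet\bullet}(P_\phi)$ together with its two coaugmentations $\ell^*$ and $\rho^*$ is symmetric under transposing rows and columns (equivalently, under regarding $P_\phi$ as an $(\Hc$-$\Gc)$-bimodule via $P_\phi^{\opp}$), so the contracting homotopy behind part (4) applies verbatim with $\ell$ replaced by $\rho$ and $\Gc$ by $\Hc$. Hence $[\,\rho^*\,]\colon H^*(\Hc)\to H^*(\tot C^{\bullet\bullet}(P_\phi))$ is an isomorphism as well.

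Finally, part (5) gives $[\,\phi^*\,]=[\,\ell^*\,]^{-1}\circ[\,\rho^*\,]$, a composite of two isomorphisms, so $[\,\phi^*\,]$ is an isomorphism, which is the claim. The only point that requires more than bookkeeping is the symmetry assertion in the second paragraph: one must check that the explicit homotopy of \cite{C} invoked for part (4) is built solely from a chosen section of the relevant moment map and the simplicial structure, and therefore does not privilege $\ell$ over $\rho$. I expect this to be routine from the form of the homotopy in \cite{C}, and it is the main (mild) obstacle; everything else is a direct application of Proposition~\ref{P:nicegroupoidFacts}.
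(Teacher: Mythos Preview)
Your proposal is correct and is essentially the paper's own argument. The paper's one-line proof invokes Corollary~\ref{C:fundamental} (whose proof uses part~(5) of Proposition~\ref{P:nicegroupoidFacts} to get $[\,\phi^*\,]=[\,\ell^*\,]^{-1}\circ[\,\rho^*\,]$) together with part~(4) applied, by the very symmetry you describe, to the right moment map; you have simply made the symmetry step explicit.
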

\begin{proof}  This follows from Corollary \eqref{C:fundamental} and Part (4) of Proposition \eqref{P:nicegroupoidFacts}.
\end{proof}
Corollaries \eqref{C:fundamental} and \eqref{C:fundamental2} will be very useful because all of the Morita equivalences that have been introduced so far are essential equivalences, as the next proposition shows.  Before the proposition let us describe two more groupoids.
\begin{example}\label{Ex:bestgroupoid1} Let $\rho:\Gc\to G/N$ be a homomorphism and let $G\times_{G/N,\rho}\Gc_1$ denote the fibred product (i.e. the space $\{(g,\gamma)\ |\ gN = \rho(\gamma)\in G/N\}$).  Define
\[ G\times_{G/N,\rho}\Gc:= (G\times_{G/N,\rho}\Gc_1\arrows\Gc_0) \]
with structure maps $s(g,\gamma):=s\gamma$, $r(g,\gamma):=r\gamma$, and $(g_1,\gamma_1)\circ(g_2,\gamma_2):=(g_1g_2,\gamma_1\gamma_2)$.  Note that any lift of $\rho$ to a continuous map $\tilde\rho:\Gc_1\to G$ determines an isomorphism of groupoids
\[ N\rtimes^{\delta\rho}\Gc\longrightarrow G\times_{G/N,\rho}\Gc\qquad (n,\gamma)\mapsto(n\tilde\rho(\gamma),\gamma) \]
When no such lift exists this fibred product groupoid is an example of an $N$-gerbe without section.
\end{example}
\begin{example}\label{Ex:bestgroupoid2}
The fibred product groupoid of Example \eqref{Ex:bestgroupoid1} is equipped with a canonical right module $P:=G\times\Gc_0$, for which the induced groupoid is
\[ G\rtimes(G\times_{G/N,\rho}\Gc):=((G\times G\times_{G/N,\rho}\Gc_1)\arrows G\times\Gc_0) \]
with structure maps $s(g,g',\gamma):=(gg',s\gamma)$, $r(g,g',\gamma):=(g,r\gamma)$, and $(g,g_1,\gamma_1)\circ(gg_1,g_2,\gamma_2):=(g,g_1g_2,\gamma_1\gamma_2)$.  A lift $\tilde\rho$
determines a homeomorphism
\[ G\rtimes (N\rtimes^{\delta\rho}\Gc)\longrightarrow G\rtimes(G\times_{G/N,\rho}\Gc)\qquad (g,n,\gamma)\mapsto(g,n\tilde\rho(\gamma),\gamma) \]
which implicitly defines the groupoid structure on $G\rtimes N\rtimes^{\delta\rho}\Gc$ as the one making this a groupoid isomorphism.
\end{example}
\begin{prop}[\textbf{ Essential equivalences }]\label{P:essentialEquivs}Let $\Gc$ be a groupoid, $N$ a closed subgroup of a locally compact group $G$, $\Nc_G(N)$ its normalizer in $G$, and $\rho:\Gc\to\Nc_G(N)/N\subset G/N$ a homomorphism.  Then
\begin{enumerate}
\item The gluing morphism $\Gc_\Uf\to\Gc$ from a refinement $\Gc_\Uf$ corresponding to a locally finite cover $\Uf$ of $\Gc_0$ (see Example \eqref{ex:cechgroupoid}) is an essential equivalence.
\item The quotient morphism
\[ X\rtimes N \to (X/N\arrows X/N)\qquad (x,n)\mapsto xN\in X/N, \]
for $X$ a free and proper right $N$-space, is an essential equivalence.
\item The inclusion
\[ \iota:(G\times_{G/N,\rho}\Gc)\hookrightarrow G\ltimes G/N\rtimes_\rho\Gc\qquad (g,\gamma)\mapsto (g,eN,\gamma)\]
is an essential equivalence and in particular if $\rho$ lifts to a continuous map $\tilde\rho:\Gc\to G$ then
\[ \iota:(N\rtimes^{\delta\rho}\Gc)\hookrightarrow G\ltimes(G/N\rtimes_\rho\Gc)
     \qquad (n,\gamma)\mapsto (n\tilde\rho(\gamma),eN,\gamma)  \]
is an essential equivalence.
\item The quotient map
\[ \kappa:G\rtimes(G\times_{G/N,\rho}\Gc)\to G/N\rtimes_\rho\Gc\qquad (g,g',\gamma)\mapsto (gN,\gamma)\]
is an essential equivalence and in particular if $\rho$ lifts to a continuous map $\tilde\rho:\Gc\to G$ then
\[ \kappa:G\rtimes(N\rtimes^{\delta\rho}\Gc)\to G/N\rtimes_\rho\Gc\qquad (g,n,\gamma)\mapsto (gN,\gamma)\]
is an essential equivalence.
\item If $\Gc$ is a \v{C}ech groupoid, $N$ is normal in $G$, and
\[ Q:= (G/N\times\Gc_0)/(t,r\gamma)\sim(t\rho(\gamma),s\gamma)\]
is the principle $G/N$-bundle on $X=\Gc_0/\Gc_1$ with transition functions given by $\rho$, then the quotient
\[ q:(G/N\rtimes_\rho\Gc)\longrightarrow (Q\arrows Q) \]
is an essential equivalence.
\end{enumerate}
Furthermore, in all cases where there are obvious left $G$ actions, these are essential $G$-equivalences.
\end{prop}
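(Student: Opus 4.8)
The plan is to show, in each of the five cases, that the Morita equivalence between the source groupoid $\Gc$ and the target groupoid $\Hc$ of the displayed morphism $\phi$ is implemented by the canonical bimodule $P_\phi=\Gc_0\times_{\Hc_0}\Hc_1$. Recall that $P_\phi$ is automatically right principal and that its left moment map $P_\phi\to\Gc_0$ carries the canonical continuous section $x\mapsto(x,1_{\phi(x)})$. Hence, by the parenthetical in the definition of essential equivalence (``which follows if it is left principal'') together with Proposition \eqref{P:nicegroupoidFacts}(1), it suffices in each case to check that $P_\phi$ is \emph{left} principal, i.e.\ that the left $\Gc$-action on $P_\phi$ is free and proper and that $\Gc\backslash P_\phi$ is homeomorphic to $\Hc_0$ compatibly with the right moment map, as in Definition \eqref{D:groupoidME}. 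In most cases the relevant bimodule has essentially been written down already in Section \eqref{S:Examples}, so the real task is to recognize it as $P_\phi$.

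For case (1), unwinding the gluing morphism $\Gc_\Uf\to\Gc$ identifies $P_\phi$ with $\coprod_I s^{-1}U_i$ carrying (up to the left--right flip $P\leftrightarrow P^{\opp}$) the bimodule structure of Example \eqref{ex:cechgroupoid}, which was already verified there to be a Morita equivalence bimodule. For case (2), $P_\phi=X\times_{X/N}(X/N)=X$ with the trivial right $(X/N\arrows X/N)$-action and the evident left $(X\rtimes N)$-action; left principality is exactly the hypothesis that $X$ is a free and proper $N$-space, and $(X\rtimes N)\backslash X=X/N=\Hc_0$. For case (5) one computes $P_q=(G/N\times\Gc_0)\times_Q Q=G/N\times\Gc_0$ with the canonical projection to $Q$ as right moment map; since $\Gc$ is a \v{C}ech groupoid the left action is free and proper and its orbit space is by construction $Q$, so $P_q$ is left principal. (Equivalently, $q$ factors as the $G/N$-equivariant isomorphism $G/N\rtimes_\rho\Gc\isom\check{\Gc}(\{\pi^{-1}U_\alpha\})$ of Example \eqref{ex:bundlegroupoid} followed by the gluing morphism of a \v{C}ech groupoid, which is case (1); an isomorphism is trivially an essential equivalence, and a composite of essential equivalences is one because $P_{\psi\circ\phi}\isom P_\phi*P_\psi$.)

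Cases (3) and (4) require the most bookkeeping and are where I expect the main obstacle, since the groupoids involved are crossed products of generalized principal bundles. For (3), writing out the source, range and multiplication of $G\ltimes(G/N\rtimes_\rho\Gc)$ as in Example \eqref{ex:bundleimprimitivity} identifies $P_\iota=\Gc_0\times_{(G/N\times\Gc_0)}(G\ltimes G/N\rtimes_\rho\Gc)_1$ with the space $G\times\Gc_1$ carrying precisely the bimodule structure used for the Morita equivalence established inside Example \eqref{ex:bundleimprimitivity} (with $G\times_{G/N,\rho}\Gc$ in place of $N\rtimes^{\delta\rho}\Gc$, the two agreeing via Example \eqref{Ex:bestgroupoid1} when a lift exists), and the same direct checks show it is a Morita bimodule; the point to watch is properness, but the arrow fibers of $G\times_{G/N,\rho}\Gc$ are cosets of the closed subgroup $N$, so freeness and properness of the left action reduce to those of translation by $N$. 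For (4), $P_\kappa$ is identified with the canonical right module $G\times\Gc_0$ of Example \eqref{Ex:bestgroupoid2}, and one checks directly that it is left principal over $G\rtimes(G\times_{G/N,\rho}\Gc)$ with orbit space $G/N\times\Gc_0=(G/N\rtimes_\rho\Gc)_0$. No genuinely new idea beyond the constructions of Section \eqref{S:Examples} enters; the difficulty is purely the careful tracking of the fibred-product and crossed-product structure maps.

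Finally, for the equivariant refinement: whenever $\phi$ intertwines compatible left $G$-actions on $\Gc$ and $\Hc$, the diagonal $G$-action on $P_\phi=\Gc_0\times_{\Hc_0}\Hc_1$ commutes with both moment maps and satisfies the compatibility \eqref{eq:GmoritaEquiv}, and the canonical section is $G$-equivariant; so once $P_\phi$ is known to be a Morita bimodule it is automatically a $G$-equivariant one. Hence in every one of the five cases in which the displayed morphism carries obvious compatible $G$-actions, it is an essential $G$-equivalence.
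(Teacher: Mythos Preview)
Your proposal follows exactly the paper's own argument: identify each $P_\phi$ with the Morita equivalence bimodule already constructed in Section \eqref{S:Examples} (the paper leaves the new cases from Examples \eqref{Ex:bestgroupoid1}--\eqref{Ex:bestgroupoid2} to the reader, just as you sketch the direct checks), and observe that $G$-equivariance is clear. One small slip in case (4): the bimodule $P_\kappa=(G\times\Gc_0)\times_{G/N\times\Gc_0}(G/N\times\Gc_1)$ works out to $G\times\Gc_1$, not the module $G\times\Gc_0$ of Example \eqref{Ex:bestgroupoid2}, but the direct verification you indicate goes through unchanged with this fix.
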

% note to self: item 3 with $N=\{e\}$ and $\Gc= N'\rtimes\Hc$ proves the one $N'\rtimes\Hc\to G\ltimes G\rtimes N'\rtimes\Hc$
\begin{proof}
For the groupoids that were introduced in Section \eqref{S:Examples}, simply check that the bimodules determined by these morphisms are the same as the Morita equivalence bimodules that were described there.  The two new cases involving Examples \eqref{Ex:bestgroupoid1} and \eqref{Ex:bestgroupoid2} are very similar and are left for the reader.  The statement about $G$-equivariance is clear.
\end{proof}
\begin{prop}\label{P:cohoIsoms} In the notation of Proposition \eqref{P:essentialEquivs}, suppose that $G\to G/N$ admits a continuous section (for example if $N=\{e\}$, or if $N$ is a component of $G$ or if $G$ is discrete), then the essential equivalences $\iota$ and $\kappa$ induce isomorphisms of groupoid cohomology.
\end{prop}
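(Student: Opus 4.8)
The plan is to deduce both isomorphisms from Corollary \ref{C:fundamental2}, which tells us that for an essential equivalence $\phi\colon\Gc\to\Hc$ whose associated bimodule $P_\phi=\Gc_0\times_{\Hc_0}\Hc_1$ has a \emph{right} moment map admitting a continuous section, the pullback $[\phi^*]\colon H^*(\Hc)\to H^*(\Gc)$ is an isomorphism. We already know from Proposition \ref{P:essentialEquivs} that $\iota$ and $\kappa$ are essential equivalences, and the left moment map of any $P_\phi$ carries its canonical section, so everything reduces to producing continuous sections of the right moment maps of $P_\iota$ and $P_\kappa$ --- and this is precisely where the hypothesis that $\pi\colon G\to G/N$ splits continuously, say via $\varsigma\colon G/N\to G$, is used.

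First I would unwind the two bimodules explicitly. For $\iota\colon(G\times_{G/N,\rho}\Gc)\hookrightarrow G\ltimes(G/N\rtimes_\rho\Gc)$, on units $\iota_0$ is $m\mapsto(eN,m)$, so after imposing $r$-compatibility the space $P_\iota=\Gc_0\times_{\Hc_0}\Hc_1$ simplifies to $G\times\Gc_1$; using the source map $s(g,t,\gamma)=(g^{-1}t\rho(\gamma),s\gamma)$ of Example \ref{ex:bundleimprimitivity}, the right moment map becomes $(g,\gamma)\mapsto(g^{-1}\rho(\gamma),s\gamma)\in G/N\times\Gc_0$. Similarly, for $\kappa\colon G\rtimes(G\times_{G/N,\rho}\Gc)\to(G/N\rtimes_\rho\Gc)$ one finds $P_\kappa\cong G\times\Gc_1$ with right moment map $(g,\gamma)\mapsto(gN\cdot\rho(\gamma),s\gamma)$. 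In both formulas the essential content of the right moment map is the quotient $\pi$ (precomposed with inversion on $G$ in the case of $\iota$), carried along with the $\Gc$-coordinate.

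The required section is then immediate: send a unit $(t,m)$ to the pair consisting of the identity arrow $1_m\in\Gc_1$, for which $\rho(1_m)=eN$, together with the group element $\varsigma(t)^{-1}$ (respectively $\varsigma(t)$); this lands in the fibre of the right moment map over $(t,m)$ by the relation $\varsigma(t)N=t$, and it depends continuously on $(t,m)$ since $\varsigma$, group inversion, and $m\mapsto 1_m$ are all continuous. Applying Corollary \ref{C:fundamental2} to $\iota$ and to $\kappa$ then yields the two cohomology isomorphisms, and in the lifted situations (where $\rho$ lifts to a continuous $\tilde\rho\colon\Gc\to G$) the statements follow identically through the isomorphisms $N\rtimes^{\delta\rho}\Gc\cong G\times_{G/N,\rho}\Gc$ of Example \ref{Ex:bestgroupoid1}.

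There is no serious obstacle here; the only thing demanding care is the bookkeeping --- correctly threading through the nested crossed-product and fibred-product structures and the source/range conventions defining $P_\phi$ so as to identify the right moment map --- after which continuity and well-definedness of the section, together with the listed special cases ($N=\{e\}$, $N$ a union of components of $G$, $G$ discrete, all of which make $\pi$ split), are routine.
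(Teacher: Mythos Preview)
Your argument is correct and follows exactly the paper's approach: invoke Corollary~\ref{C:fundamental2} and use the section $\varsigma\colon G/N\to G$ to build continuous sections of the right moment maps of $P_\iota$ and $P_\kappa$. The only difference is presentational: the paper merely sketches the section in the illustrative case $\Gc=*$, whereas you carry out the bimodule identifications $P_\iota\cong G\times\Gc_1\cong P_\kappa$ and the right moment maps explicitly for general $\Gc$, which is a welcome bit of care.
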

\begin{proof} By Corollary \eqref{C:fundamental2}, we only need to produce sections of the right moment maps, and the section of $\sigma:G/N\to G$ provides these.  The case $\Gc=*$ illustrates the answer: for $\iota$,  $G/N\ni gN\mapsto(*,\sigma(gN),eN)\in \{*\}\times G\times{eN}\simeq P_\iota$ does the job, while for $\kappa$ it is $G/N\ni gN\mapsto (\sigma(gN),gN)\in G\times_{G/N}G/N\simeq P_\kappa$.
\end{proof}
The following proposition is important because it implies that cocycles on crossed product groupoids can be assumed to be in a special form.
\begin{prop} The groupoid cohomology $H^*(G\ltimes(G\rtimes_\rho\Gc);B))$ is a direct summand of the equivariant cohomology $H^*_G(G\rtimes_\rho\Gc;B)$.
\end{prop}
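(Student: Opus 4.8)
The plan is to display $H^*_G(G\rtimes_\rho\Gc;B)$ as splitting off a copy of $H^*(G\ltimes(G\rtimes_\rho\Gc);B)$, using as projection the chain map $F$ of Equation \eqref{Eq:equicohoCrossedCoho}, which induces
\[
F_*\colon H^*_G(\Hc;B)\longrightarrow H^*(G\ltimes\Hc;B),\qquad \Hc:=G\rtimes_\rho\Gc,
\]
and constructing an explicit section of $F_*$. Two ingredients feed the section. First, the $G$-invariant groupoid homomorphism $\pi\colon\Hc\to\Gc$, $(g,\gamma)\mapsto\gamma$ (which one checks directly is a functor satisfying $\pi(g'{}^{-1}\cdot(g,\gamma))=\gamma$). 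Second, Proposition \eqref{P:essentialEquivs}(3) specialized to the trivial subgroup $N=\{e\}$: then $G/N=G$, $\Nc_G(N)/N=G$, the fibred product groupoid $G\times_{G,\rho}\Gc$ of Example \eqref{Ex:bestgroupoid1} is identified with $\Gc$ via $\gamma\mapsto(\rho(\gamma),\gamma)$, and $G/N\rtimes_\rho\Gc$ is just $G\rtimes_\rho\Gc$, so the proposition yields an essential equivalence
\[
\iota\colon \Gc\;\cong\;G\times_{G,\rho}\Gc\;\hookrightarrow\;G\ltimes(G\rtimes_\rho\Gc),\qquad \gamma\mapsto(\rho(\gamma),eN,\gamma),
\]
realizing $\Gc$ as the full subgroupoid of the iterated crossed product lying over $\{e\}\times\Gc_0$. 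Since $G\to G/\{e\}=G$ carries the identity section, Proposition \eqref{P:cohoIsoms} gives that $[\iota^*]\colon H^*(G\ltimes(G\rtimes_\rho\Gc);B)\to H^*(\Gc;B)$ is an isomorphism.

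Next I would define a chain map $j\colon C^\bullet(\Gc;B)\to\tot K^\bullet$ (with $\tot K^\bullet$ the total complex computing $H^*_G(\Hc;B)$) by $j(f):=\pi^*f$. As $\pi$ is $G$-invariant, $\pi^*f$ is a $G$-fixed cochain on $\Hc$, hence a $d$-closed element of the $p=0$ column $K^{0,\bullet}=C^0(G;C^\bullet(\Hc;B))=C^\bullet(\Hc;B)$; combined with the fact that $\pi^*$ intertwines the $\Hc$- and $\Gc$-differentials, this makes $j$ a morphism of complexes into $\tot K^\bullet$. The heart of the argument is then the cochain identity
\[
\iota^*\circ F\circ j=\mathrm{id}_{C^\bullet(\Gc;B)}.
\]
Since $j(f)$ sits in bidegree $(0,n)$ one has $F(j(f))=f_{0,n}(\pi^*f)$, whose value on a composable $n$-tuple of $G\ltimes\Hc$ written in the normal form $((g_1,\gamma_1),(g_2,g_1^{-1}\gamma_2),\dots)$ of Equation \eqref{Eq:equicohoCrossedCoho} is $\pi^*f$ evaluated on $(\gamma_1,\dots,\gamma_n)\in\Hc_n$. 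A short computation shows $\iota_n$ sends $(c_1,\dots,c_n)\in\Gc_n$ precisely to the tuple with $g_i=\rho(c_i)$ and $(\gamma_1,\dots,\gamma_n)=(e;c_1,\dots,c_n)\in G\times\Gc_n=\Hc_n$, on which $\pi^*f$ returns $f(c_1,\dots,c_n)$; hence the composite is the identity.

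Granting this, $[\iota^*]\circ F_*\circ j_*=\mathrm{id}_{H^*(\Gc;B)}$, and because $[\iota^*]$ is an isomorphism this forces $F_*\circ j_*=[\iota^*]^{-1}$, so $s:=j_*\circ[\iota^*]$ obeys $F_*\circ s=\mathrm{id}$. Therefore $F_*$ is split surjective and $H^*(G\ltimes(G\rtimes_\rho\Gc);B)$ is a direct summand of $H^*_G(G\rtimes_\rho\Gc;B)$. I expect the only genuine obstacle to be the cochain identity of the previous paragraph: it is not deep, but it requires careful bookkeeping to keep the ``outer'' copy of $G$ (the one crossed with $\Hc$) disentangled from the ``inner'' copy inside $\Hc=G\rtimes_\rho\Gc$, and to match the parametrization of composable tuples used in the formula for $F$ against the explicit form of $\iota$ coming from Examples \eqref{Ex:bestgroupoid1} and \eqref{Ex:bestgroupoid2}. (Here $B$ may be read as a constant coefficient group; for a general coefficient system one pulls back along $\pi$ and the same argument applies.)
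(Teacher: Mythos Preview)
Your proof is correct and follows essentially the same route as the paper's. The paper likewise uses the essential equivalence $\iota:\Gc\hookrightarrow G\ltimes(G\rtimes_\rho\Gc)$, $\gamma\mapsto(\rho(\gamma),e,\gamma)$, together with the pullback along the quotient $\pi:G\rtimes_\rho\Gc\to\Gc$ landing in the $p=0$ column of the double complex, and then composes with $F$; the only cosmetic difference is that the paper first identifies your composite $F\circ j$ with $q^*$ (pullback along the full quotient $q:G\ltimes(G\rtimes_\rho\Gc)\to\Gc$) and then invokes $q\circ\iota=\mathrm{id}_\Gc$, whereas you verify $\iota^*\circ F\circ j=\mathrm{id}$ by direct inspection.
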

\begin{proof} We will show that the identity map on $H^*(G\ltimes(G\rtimes_\rho\Gc);B))$ factors through $H^*_G(G\rtimes_\rho\Gc;B)$.

The inclusion $\iota:\Gc\hookrightarrow G\ltimes(G\rtimes_\rho\Gc)$ sending $\gamma\mapsto(\rho(\gamma),e,\gamma)$ induces a chain map
\[ \iota^*:C^\bullet(G\ltimes(G\rtimes_\rho\Gc);B)\To C^\bullet(\Gc;B) \]
which is a quasi-isomorphism by Proposition \eqref{P:cohoIsoms}.  The quotient $G\ltimes(G\rtimes_\rho\Gc)\To\Gc$ induces the chain map
\[ q^*:C^\bullet(\Gc;B)\To C^\bullet(G\ltimes(G\rtimes_\rho\Gc);B) \]
which is also a quasi-isomorphism, and in fact induces the quasi-inverse to $\iota^*$ since $\iota^*\circ q^*=(q\circ\iota)^*=Id^*$.

Now the quotient $G\rtimes_\rho\Gc\to\Gc$ induces a chain map $C^\bullet(\Gc)\to C^\bullet(G\rtimes_\rho\Gc)$ and thus a chain map
\[  C^\bullet(\Gc)\to C^\bullet(G\rtimes_\rho\Gc)\to\tot C^\bullet(G,C^\bullet(G\rtimes_\rho\Gc)). \]
Note that the first map and the composition of both maps are chain morphisms, but the second is not.
Following this by the morphism $\tot C^\bullet(G,C^\bullet(G\rtimes_\rho\Gc))\to C^\bullet(G\ltimes(G\rtimes_\rho\Gc))$ of Equation \eqref{Eq:equicohoCrossedCoho} induces a sequence
\[ C^\bullet(\Gc)\To \tot C^\bullet(G;C^\bullet(G\rtimes_\rho\Gc))\To C^\bullet(G\ltimes(G\rtimes_\rho\Gc)) \]
whose composition is easily seen to equal $q^*$.  Finally, precomposing with $\iota^*$ provides the promised factorization.
\end{proof}
\begin{rem} Whenever a group $G$ acts freely and properly on a groupoid $\Hc$ and $\Hc\to G\backslash\Hc$ has local sections, then $\Hc$, being a principal $G$-bundle, is equivariantly Morita equivalent to $G\rtimes_\rho\Gc$ for some $\rho$ and $\Gc$, where $\Gc$ is a refinement of the quotient groupoid $\Hc/G:=(\Hc_1/G\arrows\Hc_0/G)$.  In particular there is a refinement of $\Hc$ that is equivariantly isomorphic to $G\rtimes_\rho\Gc$.  Thus after a possible refinement the above Proposition is a statement about the cohomology of all free and proper $G$-groupoids with local sections.
\end{rem}

For completeness we will include the following proposition, which might be attributable to Haefliger.  It implies that one can work exclusively with essential equivalences if desired.
\begin{prop} Let $P$ be a $\Gc$-$\Hc$-Morita equivalence which is locally trivial as an $\Hc$-module, then the Morita equivalence can be factored in the form:
\[ \Gc\longleftarrow \Gc_\Uf\stackrel{\phi}{\longrightarrow}\Hc \]
where $\Gc_\Uf$ is a refinement of $\Gc$ and $\phi$ is an essential equivalence.
\end{prop}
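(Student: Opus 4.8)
The plan is to use the refinement $\Gc_\Uf$ precisely to glue the local sections furnished by local triviality into a single global section, and then to read off the essential equivalence from part (1) of Proposition \eqref{P:nicegroupoidFacts}.

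First I would unwind the hypothesis. By the Morita axioms (Definition \eqref{D:groupoidME}) the quotient map $P\to P/\Hc$ is identified with the left moment map $\ell:P\to\Gc_0$, so ``$P$ locally trivial as an $\Hc$-module'' means exactly that $\ell$ admits local sections. Choose an open cover $\Uf=\{U_i\}$ of $\Gc_0$ together with continuous sections $\sigma_i:U_i\to\ell^{-1}(U_i)$ of $\ell$; since sections restrict to smaller opens and $\Gc_0$ is second countable, we may pass to a refinement and assume $\Uf$ is countable and locally finite, so that the refinement groupoid $\Gc_\Uf$ of Example \eqref{ex:cechgroupoid} is a legitimate groupoid in our conventions. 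By Proposition \eqref{P:essentialEquivs}(1) the gluing morphism $\Gc_\Uf\to\Gc$ is an essential equivalence, implemented by the Morita $(\Gc$-$\Gc_\Uf)$-bimodule $Q=\coprod_i s^{-1}U_i$.

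Next I would form the composite Morita equivalence $\tilde P:=Q^{\opp}*P$, which is a $(\Gc_\Uf$-$\Hc)$-bimodule. Because a composite of Morita equivalences is again one, and $Q*\tilde P\isom Q*Q^{\opp}*P\isom\Gc*P\isom P$, this already displays the factorization $\Gc\leftarrow\Gc_\Uf\to\Hc$ of the original equivalence up to isomorphism; what remains is to realize the second leg by an honest morphism. For this I would identify $\tilde P$ explicitly: in $Q^{\opp}\times_{\Gc_0}P$ the point $(\eta^i,p)$ (with $r\eta=\ell(p)$) is carried by $\eta^{-1}\in\Gc$ to $(1_{s\eta}^{\,i},\eta^{-1}p)$, and one checks this unit-first-coordinate representative is unique in its $\Gc$-orbit; hence $\tilde P\isom\coprod_i\ell^{-1}(U_i)$, and under this identification the left $\Gc_\Uf$-moment map becomes $\coprod_i(\ell|_{\ell^{-1}(U_i)}):\coprod_i\ell^{-1}(U_i)\to\coprod_i U_i=(\Gc_\Uf)_0$. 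Consequently $\coprod_i\sigma_i$ is a continuous \emph{global} section of the left moment map of $\tilde P$. Applying Proposition \eqref{P:nicegroupoidFacts}(1) to the Morita $(\Gc_\Uf$-$\Hc)$-bimodule $\tilde P$ then yields an essential equivalence $\phi:\Gc_\Uf\to\Hc$ with $\tilde P\isom P_\phi$, which is the required factorization $\Gc\leftarrow\Gc_\Uf\stackrel{\phi}{\to}\Hc$.

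The only genuine labor is the orbit-space bookkeeping identifying $Q^{\opp}*P$ with $\coprod_i\ell^{-1}(U_i)$ and tracing the left moment map through it; the conceptual point, which is essentially free, is that the refinement $\Gc_\Uf$ is designed exactly so that a disjoint family of local sections of $\ell$ assembles into one section over $(\Gc_\Uf)_0$. When $P$ carries a compatible $G$-action all constructions above are $G$-equivariant, so the same argument gives a $G$-equivariant factorization in that case.
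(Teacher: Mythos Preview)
Your proof is correct and follows essentially the same approach as the paper's own proof: pass to a refinement over which the left moment map has local sections, so that the refined bimodule $P_\Uf$ acquires a global section of its left moment map, and then invoke Proposition \ref{P:nicegroupoidFacts}(1). The paper's proof is two sentences and leaves the identification $Q^{\opp}*P\isom\coprod_i\ell^{-1}(U_i)$ implicit; you have simply unpacked this bookkeeping explicitly, which is fine.
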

\begin{proof} The left moment map for $P$ admits local sections so there is a cover $\Uf$ of $\Gc_0$ such that the refined Morita $\Gc_\Uf$-$\Hc$-bimodule $P_\Uf$ has a section for its left moment map.  By Proposition \eqref{P:nicegroupoidFacts} $P_\Uf\simeq P_\phi$ for some essential equivalence $\phi$.
\end{proof}

\section{Generalized Mackey-Rieffel imprimitivity}
In this section we show that a specific pair of twisted groupoids is Morita equivalent.  The two Morita equivalent twisted groupoids correspond, respectively, to a $\Uone$-gerbe on a crossed product groupoid for a $G$-action on a generalized principal $G/N$-bundle, and to a $\Uone$-gerbe over an $N$-gerbe.  We also describe group actions on the groupoids that make the Morita equivalence equivariant.  The equivalence is a simple consequence of the methods of Section \eqref{S:classicalT-duality}), but it deserves to be singled out because  both twisted groupoids appear in the statement of classical T-duality.

\begin{thm}[\textbf{Generalized Mackey-Rieffel imprimitivity}]\label{p:GenRieffelImprim} Let $\Gc$ be a groupoid, $G$ a locally compact group, $N<G$ a closed normal subgroup, and $\bar{\rho}:\Gc\to G/N$ a homomorphism that admits a continuous lift $\rho:\Gc\to G$.  Form the two groupoids of Example \eqref{ex:bundleimprimitivity}:
\begin{enumerate}
\item $\Hc:=G\ltimes(G/N\rtimes_{\bar{\rho}}\Gc)$
\item $\Kc:=N\rtimes^{\delta\rho}\Gc$
\end{enumerate}
Then for any $G$-equivariant gerbe presented by a 2-cocycle $(\sigma,\lambda,\beta)$ as in Equation \eqref{eq:twococycle}, there is a Morita equivalence of twisted groupoids
\[ (\Hc,\psi)\sim(\Kc,\chi) \]
where $\psi\in Z^2(\Hc;\Uone)$ and $\chi\in Z^2(\Kc;\Uone)$ are the given by
\begin{align}\label{Eq:psichi}
\psi((g_1,t,\gamma_1),(g_2,g_1^{-1}t\rho(\gamma_1),\gamma_2))
&:=\sigma(t,\gamma_1,\gamma_2)\lambda(g_1,t\rho(\gamma_1),\gamma_2)\notag \\
&\qquad\times\beta(g_1,g_2,t\rho(\gamma_1)\rho(\gamma_2),s\gamma_2)\\
\chi((n_1,\gamma_1),(n_2,\gamma_2))
&:=\sigma(e_{G/N},\gamma_1,\gamma_2)\lambda(n_1\rho(\gamma_1),\rho(\gamma_1),\gamma_2)\notag\\
&\qquad\times\beta(n_1\rho(\gamma_1),n_2\rho(\gamma_2),\rho(\gamma_1)\rho(\gamma_2),s\gamma_2),
\end{align}
for $g's\in G$, $t\in G/N$, $n's\in N$, and $\gamma's\in\Gc$.
\end{thm}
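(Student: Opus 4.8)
The plan is to recognize $\psi$ as the image of the equivariant cocycle $(\sigma,\lambda,\beta)$ under the chain map of Equation \eqref{Eq:equicohoCrossedCoho}, and then to transport the twisting along the essential equivalence $\Kc\hookrightarrow\Hc$ supplied by Proposition \eqref{P:essentialEquivs}(3); the Morita equivalence of twisted groupoids is then read off from Corollary \eqref{C:fundamental} together with Theorem \eqref{p:twistedGroupoidMoritaEq}.

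First I would verify that $\psi$ is exactly $F(\sigma,\lambda,\beta)$, where $F$ is the chain map of Equation \eqref{Eq:equicohoCrossedCoho} applied to the $G$-groupoid $G/N\rtimes_{\bar\rho}\Gc$, so that its target is $C^\bullet\big(G\ltimes(G/N\rtimes_{\bar\rho}\Gc);\Uone\big)=C^\bullet(\Hc;\Uone)$. This is a matter of unwinding conventions: an arrow of $\Hc$ is a triple $(g,t,\gamma)$ with $t\in G/N$, $\gamma\in\Gc_1$, a composable pair is forced to be of the form $\big((g_1,t,\gamma_1),(g_2,g_1^{-1}t\rho(\gamma_1),\gamma_2)\big)$ as in Example \eqref{ex:bundleimprimitivity}(1), and feeding such a pair to the components $f_{0,2}(\sigma)$, $f_{1,1}(\lambda)$, $f_{2,0}(\beta)$ of $F$ and multiplying (the coefficients lie in $\Uone$) reproduces the three displayed factors of $\psi$ — with $\beta$, which is a $0$-cochain on $G/N\rtimes_{\bar\rho}\Gc$, evaluated at the source $(t\rho(\gamma_1)\rho(\gamma_2),s\gamma_2)$ of the composite arrow. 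Since $F$ is a chain map it sends the cocycle $(\sigma,\lambda,\beta)$ to a cocycle, so $\psi\in Z^2(\Hc;\Uone)$.

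Next I would invoke Proposition \eqref{P:essentialEquivs}(3). Because $N$ is normal, $\bar\rho$ is a homomorphism $\Gc\to G/N=\Nc_G(N)/N$, and it lifts to $\rho:\Gc\to G$ by hypothesis, so the inclusion
\[ \iota:\Kc=N\rtimes^{\delta\rho}\Gc\hookrightarrow G\ltimes(G/N\rtimes_{\bar\rho}\Gc)=\Hc,\qquad (n,\gamma)\mapsto(n\rho(\gamma),eN,\gamma), \]
is an essential equivalence. Corollary \eqref{C:fundamental}, applied with $M=\Uone$ and this $\iota$, then gives that $\psi$ and $\iota^*\psi$ are cohomologous in the double complex of the Morita equivalence $P_\iota$, whence Theorem \eqref{p:twistedGroupoidMoritaEq}(1) yields a Morita equivalence of twisted groupoids $(\Hc,\psi)\sim(\Kc,\iota^*\psi)$. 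It remains to identify $\iota^*\psi$ with $\chi$: substituting $g_1=n_1\rho(\gamma_1)$, $g_2=n_2\rho(\gamma_2)$, $t=eN$ into the formula for $\psi$ — and using that each $n_i$ becomes trivial in $G/N$, so that the pair $\big(\iota(n_1,\gamma_1),\iota(n_2,\gamma_2)\big)$ really is of the composable form above with $t=eN$ — the three factors collapse to $\sigma(e_{G/N},\gamma_1,\gamma_2)$, $\lambda(n_1\rho(\gamma_1),\rho(\gamma_1),\gamma_2)$ and $\beta(n_1\rho(\gamma_1),n_2\rho(\gamma_2),\rho(\gamma_1)\rho(\gamma_2),s\gamma_2)$, which is precisely $\chi$. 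This identification also shows $\chi=\iota^*\psi\in Z^2(\Kc;\Uone)$, since pullbacks of cocycles along groupoid morphisms are cocycles (Proposition \eqref{P:nicegroupoidFacts}(2)).

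There is no hard analysis in any of this; the work is bookkeeping. I expect the step most prone to error to be the identification $\psi=F(\sigma,\lambda,\beta)$, where one must keep straight the two occurrences of the quotient $G\to G/N$ hidden inside notation such as ``$t\rho(\gamma)$'' and remember that the $0$-cochain $\beta$ is evaluated at the source of the composite arrow, not at one of the factors. An alternative that bypasses $F$ is to start from the known untwisted equivalence $\Hc\sim\Kc$ implemented by $P=G\times\Gc$ (the Proposition following Example \eqref{ex:bundleimprimitivity}) and exhibit an explicit pair $(\mu,\nu^{-1})\in C^{1,0}(P)\times C^{0,1}(P)$ with $D(\mu,\nu^{-1})=(\tilde\psi,0,\tilde\chi^{-1})$ as in the proof of Theorem \eqref{p:twistedGroupoidMoritaEq}; but going through Corollary \eqref{C:fundamental} is shorter, since $\iota$ already comes with a canonical section of its left moment map, and it also makes the $G$-equivariant refinement immediate via the last sentence of Proposition \eqref{P:essentialEquivs}.
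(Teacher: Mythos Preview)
Your proposal is correct and follows essentially the same route as the paper: identify $\psi=F(\sigma,\lambda,\beta)$ via the chain map of Equation \eqref{Eq:equicohoCrossedCoho}, observe that $\chi=\iota^*\psi$ for the essential equivalence $\iota$ of Proposition \eqref{P:essentialEquivs}(3), and conclude via Corollary \eqref{C:fundamental} (which is what the paper means by ``the results in Section \eqref{S:cohoFacts}''). Your write-up is in fact more explicit than the paper's about the bookkeeping in each of these steps, but the architecture is identical.
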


\begin{proof}
For the sake of understanding, let us first see where $\psi$ comes from.  Set $(g_1,h_1)=(g_1,t,\gamma_1),(g_2,h_2)=(g_2,g_1^{-1}t\rho(\gamma_1),\gamma_2)$.  Then a composed pair looks like $(g_1,h_1)(g_2,h_2)=(g_1g_2,h_1g_1h_2)$, and $\psi=\sigma(h_1,g_1h_2)\lambda(g_1,g_1h_2)\beta(g_1,g_2,g_1g_2(sh_2))$.  In other words, $\psi=F(\sigma,\lambda,\beta)$, the image of $(\sigma,\lambda,\beta)$ under the chain map
\[F:\tot C^\bullet(G,C^\bullet((G/N\rtimes_{\bar{\rho}}\Gc),\Uone))\to C^\bullet(\Hc;\Uone)\]
of Equation \eqref{Eq:equicohoCrossedCoho}.  So $\psi$ comes from extending a 2-cocycle from $(G/N\rtimes_{\bar{\rho}}\Gc)$ to an equivariant 2-cocycle. Now $\chi=\iota^*\circ\psi$, where $\iota:N\rtimes^{\delta\rho}\Gc\to G\ltimes G/N\rtimes_{\bar{\rho}}\Gc$ is as in Proposition \eqref{P:essentialEquivs} and the theorem follows immediately from the results in Section \eqref{S:cohoFacts}.
\end{proof}
\begin{rem} In the hypotheses of the above theorem it is not necessary to have the lift $\tilde{\rho}$.  In the absence of such a lift, one simply replaces $N\rtimes^{\delta\rho}\Gc$ by $G\times_{G/N,\rho}\Gc$ as in Example \eqref{Ex:bestgroupoid1}.
\end{rem}
When $G$ is abelian there is a canonical $\widehat{G}$ action, denoted $\hat{\alpha}$, on $\Uone\rtimes^\psi\Hc$, given by
\begin{equation}\label{E:GhatAction}
\phi\cdot(\theta,g,t,\gamma)=(\theta\langle\phi,g\rangle,g,t,\gamma),\quad \phi\in\widehat{G},\ (\theta,g,t,\gamma)\in\Uone\rtimes^\psi\Hc.
\end{equation}
This action corresponds to the natural $\widehat{G}$-action on a crossed product algebra $G\ltimes A$.
As one expects, the above Morita equivalence takes this action to the same action, pulled back via $\iota$.
\begin{prop}\label{P:actionsforGenRiefImp} In the notation of Theorem \eqref{p:GenRieffelImprim} let $G$ be an abelian group.  Then the canonical $\widehat{G}$-action, $\hat{\alpha}$, on $\Uone\rtimes^\psi\Hc$ is transported under the Morita equivalence $(\Uone\rtimes^\psi\Hc)\sim(\Uone\rtimes^\chi\Kc)$ to
\[ \iota^*(\hat{\alpha})(\phi)(\theta,n,\gamma):=(\theta\langle\phi,n\rho(\gamma)\rangle,n,\gamma),\quad (\theta,n,\gamma)\in\Uone\rtimes^\chi\Kc. \]
Thus with these actions the Morita equivalence of Theorem \eqref{p:GenRieffelImprim} becomes $\widehat{G}$-equivariant.
\end{prop}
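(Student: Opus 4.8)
The plan is to leverage the fact that the Morita equivalence of Theorem \eqref{p:GenRieffelImprim} is assembled entirely from the essential equivalence $\iota$ of Proposition \eqref{P:essentialEquivs} together with the (functorial) comparison machinery of Section \eqref{S:cohoFacts}; consequently any extra symmetry that is present on the $\Hc$-side and compatible with $\iota$ is automatically carried along to the $\Kc$-side. Concretely, since $\chi=\iota^{*}\psi$ --- the identity exploited in the proof of Theorem \eqref{p:GenRieffelImprim} --- the formula
\[ \tilde\iota\colon\Uone\rtimes^{\chi}\Kc\longrightarrow\Uone\rtimes^{\psi}\Hc,\qquad (\theta,n,\gamma)\longmapsto(\theta,\,n\rho(\gamma),\,e_{G/N},\,\gamma) \]
defines a homomorphism of twisted groupoids whose underlying groupoid homomorphism is $\iota$. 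Because the $\Uone$-twisting leaves the unit spaces of the two gerbes unchanged, $\tilde\iota$ is an essential equivalence exactly because $\iota$ is, and its associated bimodule $P_{\tilde\iota}$ realizes the Morita equivalence of Theorem \eqref{p:GenRieffelImprim}. So it suffices to put a $\widehat G$-action on $\Uone\rtimes^{\chi}\Kc$ for which $\tilde\iota$ is equivariant: the bimodule $P_{\tilde\iota}$ then inherits from $\hat\alpha$ a $\widehat G$-action compatible with both module structures and satisfying Equation \eqref{eq:GmoritaEquiv}, so the equivalence of Theorem \eqref{p:GenRieffelImprim} becomes $\widehat G$-equivariant with $\hat\alpha$ corresponding to the action placed on the $\Kc$-side.

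The candidate is the action $F$ of the statement, $F(\phi)(\theta,n,\gamma):=(\theta\langle\phi,n\rho(\gamma)\rangle,n,\gamma)$. The first thing I would check is that $\tilde\iota$ intertwines $F$ with the canonical action $\hat\alpha$ of Equation \eqref{E:GhatAction}, and this is immediate: the $G$-coordinate of $\iota(n,\gamma)$ is precisely $n\rho(\gamma)$, so $\hat\alpha(\phi)$ multiplies the $\Uone$-coordinate of $\tilde\iota(\theta,n,\gamma)$ by $\langle\phi,n\rho(\gamma)\rangle$, i.e. $\hat\alpha(\phi)\circ\tilde\iota=\tilde\iota\circ F(\phi)$. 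It then remains to confirm that $F$ is genuinely an action of $\widehat G$ by automorphisms of the twisted groupoid $(\Kc,\chi)$. That $\phi\mapsto F(\phi)$ is a homomorphism $\widehat G\to\Aut(\Uone\rtimes^{\chi}\Kc)$ follows from bilinearity of the pairing, and $F(\phi)$ visibly commutes with source, range and inversion; the one substantive point is that $F(\phi)$ preserves the $\chi$-twisted composition, which is equivalent to the $\Uone$-valued $1$-cochain $(n,\gamma)\mapsto\langle\phi,n\rho(\gamma)\rangle$ on $\Kc$ being a cocycle.

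This cocycle identity is the only real computation. In $\Kc=N\rtimes^{\delta\rho}\Gc$ one has $(n_1,\gamma_1)(n_2,\gamma_2)=(n_1\gamma_1(n_2)\,\delta\rho(\gamma_1,\gamma_2),\,\gamma_1\gamma_2)$ with $\gamma_1(n_2)=\rho(\gamma_1)n_2\rho(\gamma_1)^{-1}$ and $\delta\rho(\gamma_1,\gamma_2)=\rho(\gamma_1)\rho(\gamma_2)\rho(\gamma_1\gamma_2)^{-1}$, so $(n_1\gamma_1(n_2)\,\delta\rho(\gamma_1,\gamma_2))\,\rho(\gamma_1\gamma_2)=n_1\rho(\gamma_1)\,n_2\rho(\gamma_2)$; pairing with $\phi$ and using that $G$ is abelian yields $\langle\phi,n_1\rho(\gamma_1)\rangle\langle\phi,n_2\rho(\gamma_2)\rangle$, which is exactly the cocycle relation. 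Combined with the intertwining observation, this shows that under the equivalence of Theorem \eqref{p:GenRieffelImprim} the action $\hat\alpha$ goes over to $F=\iota^{*}(\hat\alpha)$, proving the proposition; for the variant of Theorem \eqref{p:GenRieffelImprim} in which $\bar{\rho}$ has no continuous lift, the same argument applies with $G\times_{G/N,\bar{\rho}}\Gc$ of Example \eqref{Ex:bestgroupoid1} replacing $\Kc$. The main obstacle, if there is one, is not an isolated hard step but rather making sure that $\widehat G$-equivariance is genuinely preserved through the entire chain of essential equivalences and cohomology comparisons underlying Theorem \eqref{p:GenRieffelImprim}; granting that these constructions are natural in the acting group, the proof comes down to the intertwining observation and the one-line cocycle check.
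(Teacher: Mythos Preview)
Your argument is correct and follows essentially the same route as the paper: the Morita bimodule is $P_{\tilde\iota}\simeq\Uone\times P_\iota$, the $\widehat G$-action on it is the one inherited from $\hat\alpha$ (explicitly, $\phi\cdot(\theta,g,\gamma)=(\theta\langle\phi,g\rangle,g,\gamma)$), and one checks Equation \eqref{eq:GmoritaEquiv}. Your presentation is in fact more thorough than the paper's, which simply writes down the action on the bimodule and asserts compatibility; your verification that $(n,\gamma)\mapsto\langle\phi,n\rho(\gamma)\rangle$ is a $1$-cocycle on $\Kc$ (equivalently, that $F(\phi)$ respects the $\chi$-twisted multiplication) and your observation that equivariance of $\tilde\iota$ then forces Equation \eqref{eq:GmoritaEquiv} make explicit what the paper leaves to the reader.
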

\begin{proof} $\Uone\times P_\iota$ is the Morita $(\Uone\rtimes^\psi\Hc)$-$(\Uone\rtimes^{\chi}\Kc)$-bimodule here.  For $\phi\in\widehat{G}$ and $(\theta,g,\gamma)\in\Uone\times P_\iota$, define an action by $\phi\cdot(\theta,g,\gamma)=(\theta\langle\phi,g\rangle,g,\gamma)$.  Then this action, along with the actions $\hat{\alpha}$ and $\iota^*\hat{\alpha}$ determines an equivariant Morita equivalence (that is, Equation \eqref{eq:GmoritaEquiv} is satisfied for these actions).
\end{proof}

\section{Classical T-duality}\label{S:classicalT-duality}
Let us start with the definition.  For us \textbf{classical T-duality} will refer to a T-duality between a $\Uone$-gerbe on a generalized torus bundle and a $\Uone$-gerbe on a generalized principal dual-torus bundle.  If instead of a torus bundle we start with a $G/N$-bundle, for $N$ a closed subgroup of an abelian locally compact group $G$, then we still call this classical T-duality since it is the same phenomenon.  Thus ``classical'' means for us that the groups involved are abelian and furthermore that the dual object does not involve noncommutative geometry.

We are now ready to fully describe the T-dualization procedure.  There will be several remarks afterwards.

\noindent\textbf{Classical T-duality.}
Suppose we are given the data of a generalized principal $G/N$-bundle and $G$-equivariant twisting 2-cocycle whose $C^{2,0}$-component $\beta\in C^2(G;C^0(G/N\rtimes_{\bar{\rho}}\Gc;\Uone))$ is trivial:
\begin{equation}\label{E:classicalTdata}
(\ G/N\rtimes_{\bar{\rho}}\Gc,\ (\sigma,\lambda,1)\in Z^2_G(G/N\rtimes_{\bar{\rho}}\Gc;\Uone)\ ).
\end{equation}
Then this is the initial data for a classically T-dualizable bundle, and the following steps produce its T-dual.
\begin{rem} Because $G$ is abelian the restriction of $\lambda:G\times G/N\times\Gc_1\to\Uone$ to $N\times G/N\times\Gc_1$ does not depend on $G/N$.  Indeed, for $n\in N$, $g\in G$, and $t\in G/N$
\[ \lambda(n,t,\gamma)=\lambda(ng,t,\gamma)\lambda(g,t,\gamma)^{-1}=
                  \lambda(gn,t,\gamma)\lambda(g,t,\gamma)^{-1}=\lambda(n,g^{-1}t,\gamma).\]
Furthermore, the cocycle conditions ensure that $\lambda|_{N\times G/N\times\Gc_1}$ is homomorphic in both $N$ and $\Gc$.  We write $\bar{\lambda}$ for the induced homomorphism $\bar{\lambda}:\Gc\to\widehat{N}$.\end{rem}
\ \newline
\noindent\underline{Step 1}\ \ Pass from $(G/N\rtimes_{\bar{\rho}}\Gc,\ (\sigma,\lambda,1))$ to the crossed product groupoid $G\ltimes(G/N\rtimes_{\bar{\rho}}\Gc)$, with twisting 2-cocycle $\psi:=F(\sigma,\lambda,1)$ of Equation \eqref{Eq:psichi}, and with canonical $\widehat{G}$-action $\hat{\alpha}$ of Equation \eqref{E:GhatAction}:
\[(\ G\ltimes(G/N\rtimes_{\bar{\rho}}\Gc),\ \psi,\ \hat{\alpha}\ ).\]
\ \newline
\noindent\underline{Step 2}\ \ Choose a lift $\rho:\Gc\to G$ of $\bar{\rho}$ and pass from $G\ltimes(G/N\rtimes_{\bar{\rho}}\Gc)$ to the Morita equivalent $N$-gerbe $N\rtimes^{\delta\rho}\Gc$ with twisting 2-cocycle $\iota^*\psi$ and $\widehat{G}$-action $\iota^*\hat{\alpha}$ of Proposition \eqref{P:actionsforGenRiefImp}:
\[ (\ N\rtimes^{\delta\rho}\Gc,\ \iota^*\psi,\ \iota^*\hat{\alpha}\ ). \]
\ \newline
\noindent\underline{Step 3}\ \ Pass to the Pontryagin dual system.  More precisely, pass to the twisted groupoid with $\widehat{G}$-action whose twisted groupoid algebra is $\widehat{G}$-equivariantly isomorphic to the algebra of Step 2, when the chosen isomorphism is Fourier transform in the $N$-direction.  The result is an $\widehat{N}$-bundle, but not exactly of the form we have been considering.  Here is the Pontryagin dual system:
\begin{itemize}
\item Groupoid: $\Kc:=(\widehat{N}\times\Gc_1\arrows\widehat{N}\times\Gc_0)$
\item Source and range: $s(\phi,\gamma):=(\phi,s\gamma),\
      r(\phi,\gamma):=\phi(\bar{\lambda}(\gamma)^{-1},r\gamma)$ for $(\phi,\gamma)\in\Kc_1$
\item Multiplication:\ $(\phi\bar{\lambda}(\gamma_2)^{-1},\gamma_1)(\phi,\gamma_2):=(\phi,\gamma_1\gamma_2)$, where $\bar{\lambda}:=\lambda|_{N\times\{1\}\times\Gc_1}$ (which is homomorphic in $N$), viewed as a map $\bar{\lambda}:\Gc_1\to\widehat{N}$.
\item Twisting: $\tau(\phi(\bar{\lambda}(\gamma_2)^{-1},\gamma_1),(\phi,\gamma_2))
      :=\sigma(e,\gamma_1,\gamma_2)\lambda(\rho(\gamma_1),\gamma_2)
      \langle\phi,\delta\rho(\gamma_1,\gamma_2)\rangle.$
\item $\widehat{G}$-action:\ $\phi\cdot a(\phi',\gamma):= \langle\phi',\rho(\gamma)\rangle a(\phi^{-1}\phi',\gamma)$ for $\phi'\in\widehat{G}$ and $a\in C_c(\Kc_1)$.
\end{itemize}

For aesthetic reasons, it is preferable to put this data in the same form as Equation \eqref{E:classicalTdata}.  To do this first note that $\Kc$ is isomorphic to $\widehat{N}\rtimes_{\bar{\lambda}}\Gc$ via:
\[ \widehat{N}\rtimes_{\bar{\lambda}}\Gc\stackrel{\sim}{\longrightarrow}\Kc,\qquad (\phi,\gamma)\mapsto(\phi\bar{\lambda}(\gamma),\gamma) \]
Using this isomorphism to import the twisting and $\widehat{G}$-action on $N\rtimes_{\bar{\lambda}}\Gc$ from $\Kc$ determines that $N\rtimes_{\bar{\lambda}}\Gc$ must have:
\begin{itemize}
\item Twisting: $\sigma^\vee(\phi,\gamma_1,\gamma_2):=
       \sigma(e,\gamma_1,\gamma_2)\lambda(\rho(\gamma_1),\gamma_2)
      \langle(\phi\bar{\lambda}(\gamma_1)\bar{\lambda}(\gamma_2)),\delta\rho(\gamma_1,\gamma_2)\rangle.$
\item $\widehat{G}$-action:\ $\phi'\cdot a(\phi,\gamma):= \langle\phi,\rho(\gamma)\rangle a(\phi'^{-1}\phi,\gamma)$ for $\phi'\in\widehat{G}$ and $a\in C_c(N\rtimes_{\bar{\lambda}}\Gc)$.
\end{itemize}
But this is again classical T-duality data, indeed it is what we write as:
\[ (\ \widehat{N}\rtimes_{\bar{\lambda}}\Gc,\ (\sigma^\vee,\ \rho,\ 1)\in Z^2_{\widehat{G}}(\widehat{N}\rtimes_{\bar{\lambda}}\Gc;\Uone)\ ) \]
\ \newline
Thus the classical T-dual pair is
\begin{equation}\label{E:classicalDuality}
(\ G/N\rtimes_{\bar{\rho}}\Gc,\ (\sigma,\lambda,1)\ )\longleftrightarrow
     (\widehat{N}\rtimes_{\bar{\lambda}}\Gc,\ (\sigma^\vee,\rho,1)\ ).
\end{equation}

\noindent\underline{Some properties of the duality}
\ \newline\newline
\noindent (A)\quad  Taking $C^*$-algebras everywhere, the dualizing process becomes:
\[ C^*(G/N\rtimes_{\bar{\rho}}\Gc;\sigma)\rightsquigarrow G\ltimes_\lambda
    C^*(G/N\rtimes_{\bar{\rho}}\Gc;\sigma)\stackrel{\text{Morita}}
    {\sim}C^*(N\rtimes^{\delta\rho}\Gc;\iota^*\psi)\stackrel{iso}{\simeq} C^*(\widehat{N}\rtimes_{\bar{\lambda}}\Gc;\sigma^\vee) \]
All algebras to the right of the ``$\rightsquigarrow$'' have canonically $\widehat{G}$-equivariantly isomorphic spectra.  For the passage from Step (1) to Step (2) this follows because it is a Morita equivalence of twisted groupoids by Theorem \eqref{p:GenRieffelImprim}, and by Proposition \eqref{P:actionsforGenRiefImp} this Morita equivalence is $\widehat{G}$-equivariant.  For the passage from Step (2) to Step (3) it follows because Pontryagin dualization induces an equivariant isomorphism of twisted groupoid algebras with $\widehat{G}$-action (Theorem \eqref{T:pontryagin}).
\ \newline\newline
\noindent (B)\quad  The passages (1)$\to$(2) and (2)$\to$(3) induce isomorphisms in $K$-theory for any $G$, since $K$-theory is invariant under Morita equivalence and isomorphism of $C^*$-algebras.  Thus whenever $G$ satisfies Connes'-Thom isomorphism (i.e. $G$ satisfies $K(A)\simeq K(G\ltimes A)$ for every $G$-$C^*$-algebra $A$) the duality of \eqref{E:classicalDuality} incorporates an isomorphism of twisted K-theory:
\[ K^\bullet(G/N\rtimes_{\bar{\rho}}\Gc,\sigma)\simeq K^{\bullet+dimG}(\widehat{N}\rtimes_{\bar{\lambda}}\Gc,\sigma^\vee). \]
The class of groups satisfying Connes'-Thom isomorphism include the (finite dimensional) 1-connected solvable Lie groups.
\ \newline\newline
\noindent (C)\quad  When $\Gc$ is a \v{C}ech groupoid for a space $X$, this duality can be viewed as a duality
\[  (P\to X, [(\sigma,\lambda,1)]\in H^2_G(P;\Uone))\longleftrightarrow(P^\vee\to X,[(\sigma^\vee,\rho,1)]\in H^2_{\widehat{G}}(P^\vee;\Uone)) \]
where $P$ is a principal $G/N$-bundle and $P^\vee$ is a principal $\widehat{N}$-bundle.
%This statement is equivalent to the statement that if we choose another $G/N$-groupoid presentation of $P$ and another equivariant 2-cocycle $(\sigma',\lambda',1)$ on the groupoid, cohomologous to the first, we arrive at a dual which also presents $(P^\vee,\sigma^\vee)$.
Indeed, the pair $(P,[(\sigma,\lambda)])$ are the spectrum (with its $G/N$-action) and $G$-equivariant Dixmier-Douady invariant, respectively, of the $G$-algebra $C^*(G/N\rtimes_{\bar{\rho}}\Gc;\sigma)$, and it is enough to show that the spectrum and equivariant Dixmier-Douady invariant of the dual, $C^*(\widehat{N}\rtimes_{\bar{\lambda}}\Gc,\sigma^\vee)$, is independent of the groupoid presentation of $(P,[(\sigma,\lambda)])$.  But since every procedure to the right of the ''$\rightsquigarrow$'' is an equivariant Morita equivalence of $C^*$-algebras, the result will follow as long as two different presentations give rise to objects which are equivariantly Morita equivalent at Step (1).  But if $(G/N\times_{\bar{\rho}}\Gc,(\sigma,\lambda,1))$ and $(G/N\times_{\bar{\rho'}}\Gc',(\sigma',\lambda',1))$ are $G$-equivariantly Morita equivalent groupoids with cohomologous cocycles then the groupoids
$\Uone\rtimes^\psi(G\ltimes G/N\times_{\bar{\rho}}\Gc)$ and $\Uone\rtimes^{\psi'}(G\ltimes G/N\times_{\bar{\rho}'}\Gc')$ are Morita equivalent as $\widehat{G}$ groupoids, and from this the result follows immediately.

\section{Nonabelian Takai duality}\label{S:nonabTakai}
In describing classical T-duality it was crucial that the group $G$ be abelian because the dual side was viewed as a $\widehat{G}$-space, and the procedure of T-dualizing was run in reverse by taking a crossed product by the $\widehat{G}$-action.  Since our goal is to describe an analogue of T-duality that is valid for bundles of nonabelian groups, we need a method of returning from the dual side to the original side that does not involve a Pontryagin dual group.  A solution is provided by what we call nonabelian Takai duality for groupoids.  In this section we first review classical Takai duality, then we describe the nonabelian version. The nonabelian Takai duality is constructed so that when applied to abelian groups it reduces to what is essentially the Pontryagin dual of classical Takai duality.

Recall that Takai duality for abelian groups is the passage
\[ (G-C^*\text{-algebras})\rightsquigarrow(\widehat{G}-C^*\text{-algebras}) \]
\[(\alpha,A)\longmapsto(\hat{\alpha},G\ltimes_\alpha A),\]
where $\hat{\alpha}$ is the canonical $\widehat{G}$-action $\hat{\alpha}_\phi\cdot a(g):=\phi(g)a(g)$ for $\phi\in\widehat{G},\ g\in G$ and $a\in G\ltimes A$.  This is a duality in the sense that a second application  produces a $G$-algebra $(\hat{\hat{\alpha}},\widehat{G}\ltimes_{\hat{\alpha}}G\ltimes_\alpha A)$, which is $G$-equivariantly Morita equivalent to the original $(\alpha,A)$.

Now let $(\alpha,\Hc)$ be a $G$-groupoid.  Takai duality applied (twice) to the associated groupoid algebra is the passage
\begin{align*}
(\alpha,C^*(\Hc))&\longmapsto(\hat{\alpha},C^*(G\ltimes_\alpha\Hc)) \\
&\longmapsto(\hat{\hat{\alpha}},\widehat{G}\ltimes_{\hat{\alpha}}C^*(G\ltimes_\alpha\Hc)).
\end{align*}
Comparing multiplications, one sees that the last algebra is identical to the groupoid algebra of the twisted groupoid $(\widehat{G}\ltimes_{triv}(G\ltimes_\alpha\Hc),\chi)$ where
\[\chi((\phi_1,g_1,\gamma),(\phi_2,g_2,\gamma_2)):=\langle\phi_1,g_2\rangle \]
and $(triv)$ denotes the trivial action of $\widehat{G}$.
Note that this duality cannot be expressed purely in terms of groupoids since the dual group $\widehat{G}$ only acts on the groupoid algebra.  However, taking the Fourier transform in the $\widehat{G}$-direction determines a Pontryagin duality between $(\widehat{G}\ltimes_{triv} G\ltimes_\alpha\Hc,\chi)$ and the untwisted groupoid
\[ \Gc:=(G\times G\times\Hc_1\arrows G\times\Hc_0) \]
whose source, range and multiplication are
\begin{enumerate}
\item $s(g,h,\gamma):=(g,h^{-1}s\gamma)\quad r(g,h,\gamma):=(gh^{-1},r\gamma)$
\item $(gh_2^{-1},h_1,\gamma_1)\circ(g,h_2,h_1^{-1}\gamma_2):=(g,h_1h_2,\gamma_1\gamma_2)$
\end{enumerate}
for $g's,h's\in G$ and $\gamma\in\Hc$.  This groupoid has the natural left translation action of $G$ for which it is equivariantly isomorphic to the generalized $G$-bundle
\[ G\rtimes_q(G\ltimes_\alpha\Hc) \]
where $q$ is the quotient homomorphism $q:G\ltimes_\alpha\Hc\to (G\arrows *)$.  The map is given by
\[ \Gc\longrightarrow G\rtimes_q(G\ltimes_\alpha\Hc) \]
\[ (g,h,\gamma)\longmapsto(gh^{-1},h,\gamma). \]
As was mentioned in the construction of generalized bundles, a generalized bundle $G\rtimes_\rho\Gc$ can be described as the induced groupoid of the right $\Gc$-module $P:=G\times\Gc_0$ with the obvious structure.  In the current situation the right $(G\ltimes_\alpha\Hc)$-module is
\[ P:=(G\times\Hc_0\stackrel{b}{\rightarrow}\Hc_0) \]
where the moment map $b$ is just the projection and the right action is
\[ (g,r\gamma)\cdot(h,h\gamma):=(g(q((h,h\gamma),s\gamma)=(gh,s\gamma) \]
For convenience we will write down this groupoid structure.
\[ G\rtimes_q(G\ltimes_\alpha\Hc)\equiv P\rtimes_q(G\ltimes_\alpha\Hc)
\]
with source, range and multiplication
\begin{enumerate}
\item $s(g,h,\gamma):=(gh,h^{-1}s\gamma)\quad r(g,h,\gamma):=(g,r\gamma)$
\item $(g,h_1,\gamma_1)\circ(gh_1,h_2,h_1^{-1}\gamma_2):=(g,h_1h_2,\gamma_1\gamma_2)$
\end{enumerate}
for $g's,h's\in G$ and $\gamma\in\Hc$.

The content of the previous paragraph is that up to a Pontryagin duality, a Takai duality can be expressed purely in terms of groupoids.  Given a $G$-groupoid $(\alpha,\Hc)$, one forms the crossed product $G\ltimes_\alpha\Hc$.  There is a canonical $\widehat{G}$-action on the groupoid algebra, but there is also a canonical right $(G\ltimes_\alpha\Hc)$-module $P$, and the two canonical pieces of data are essentially the same.  Now to express the duality, one passes to the induced groupoid $P\rtimes(G\ltimes_\alpha\Hc)$.  This induced groupoid is itself equipped with a natural $G$-action coming from the left translation of $G$ on $P$, and is $G$-equivariantly Morita equivalent to the original $G$-groupoid $(\alpha,\Hc)$.  Thus the $C^*$-algebra of this induced groupoid takes the place of $\widehat{G}\ltimes_{\hat{\alpha}}C^*(G\ltimes_\alpha\Hc))$ (and is isomorphic to it via Fourier transform).

This duality $(\alpha,\Hc)\rightsquigarrow(P,G\rtimes_\alpha\Hc)$ expresses essentially the same phenomena as Takai duality, but while Takai duality applies only to abelian groups, this formulation applies to arbitrary groups.  Let us write this down formally.

\begin{thm}[\textbf{Nonabelian Takai duality for groupoids}]\label{T:nonabelianTakai}Let $G$ be a locally compact group and $(\alpha,\Hc)$ a $G$-groupoid, then
\begin{enumerate}
\item $G\ltimes_\alpha\Hc$ has a canonical right module $P:=((G\times\Hc_0)\to\Hc_0)$.
\item The induced groupoid $P\rtimes_q(G\ltimes_\alpha\Hc)$ is naturally a generalized principal $G$-bundle.
\item The $G$-groupoids $(\alpha,\Hc)$ and $(\tau,P\rtimes_q(G\ltimes_\alpha\Hc))$ are equivariantly Morita equivalent, where $\tau$ denotes the left principal $G$-bundle action.
\end{enumerate}
\end{thm}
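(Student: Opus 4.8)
The plan is to dispatch (1) and (2) as unwindings of definitions and to put essentially all of the work into (3). For (1), the right $(G\ltimes_\alpha\Hc)$-module structure on $P=G\times\Hc_0$ has moment map the projection $P\to\Hc_0$ and action of an arrow $(h,\gamma)\in(G\ltimes_\alpha\Hc)_1$ given by $(g,r\gamma)\cdot(h,\gamma):=(g\,q(h,\gamma),\,s(h,\gamma))=(gh,\,\alpha_{h^{-1}}(s\gamma))$, where $q\colon G\ltimes_\alpha\Hc\to(G\arrows *)$ is the quotient homomorphism; that this is an action compatible with $s$ and $r$ is immediate from the fact that $\alpha$ commutes with the structure maps of $\Hc$. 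For (2), once the induced groupoid of this right module is written out it is literally the groupoid $G\rtimes_q(G\ltimes_\alpha\Hc)$ of Example \eqref{ex:bundlegroupoid} attached to the homomorphism $q$: its arrows are $G\times G\times\Hc_1$, its units $G\times\Hc_0$, and the $s$, $r$, composition formulas displayed in the discussion preceding the theorem match those of Example \eqref{ex:bundlegroupoid} verbatim (using $\alpha_{h^{-1}}=h^{-1}\cdot$). By definition this is a generalized principal $G$-bundle, the action $\tau$ being left translation on the first $G$-factor of both arrows and units, and freeness and properness of $\tau$ are clear.

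For (3) I would write the $G$-equivariant Morita equivalence bimodule down explicitly. Take $Q:=G\times\Hc_1$ with left moment map $\mu(g,\eta):=\alpha_g(r\eta)\in\Hc_0$ and right moment map $\nu(g,\eta):=(g,s\eta)\in G\times\Hc_0=(G\rtimes_q(G\ltimes_\alpha\Hc))_0$. Define the left $\Hc$-action (where $s\gamma=\mu(g,\eta)$) by $\gamma\cdot(g,\eta):=(g,\,\alpha_{g^{-1}}(\gamma)\,\eta)$, the right $G\rtimes_q(G\ltimes_\alpha\Hc)$-action (where $\nu(g,\eta)=r(g,h,\gamma)$) by $(g,\eta)\cdot(g,h,\gamma):=(gh,\,\alpha_{h^{-1}}(\eta\gamma))$, and the $G$-action on $Q$ by left translation $g_0\cdot(g,\eta):=(g_0g,\eta)$. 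The checks are then routine: each action is associative and free, the two commute, they satisfy the intertwining identity \eqref{eq:GmoritaEquiv} together with $\alpha$ on $\Hc$ and $\tau$ on $G\rtimes_q(G\ltimes_\alpha\Hc)$, and $\mu,\nu$ are $G$-equivariant.

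The one genuinely substantive point in (3) is the computation of the two orbit spaces. Because $\alpha$ commutes with composition, the $\Hc$-orbit of $(g,\eta)$ is exactly $\{g\}\times\{\xi\in\Hc_1 : s\xi=s\eta\}$, so $\Hc\backslash Q\cong G\times\Hc_0$ via $\nu$, which identifies the quotient map with the right moment map; dually, $\mu$ is invariant under the right action and, since $(g_1,\eta_1)\cdot(g_1,\,g_1^{-1}g_2,\,\eta_1^{-1}\alpha_{g_1^{-1}g_2}(\eta_2))=(g_2,\eta_2)$ whenever $\mu(g_1,\eta_1)=\mu(g_2,\eta_2)$, the map $\mu$ is a complete invariant, so $Q/(G\rtimes_q(G\ltimes_\alpha\Hc))\cong\Hc_0$ via $\mu$. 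Both quotient maps even carry obvious global continuous sections ($x\mapsto(e,1_x)$ and $(g,x)\mapsto(g,1_x)$, with $1_x\in\Hc_1$ the unit at $x$), so the identifications required by Definition \eqref{D:groupoidME} are honest homeomorphisms, and properness of the two actions is the standard consequence of the standing local compactness and Haar system hypotheses on $G$ and $\Hc$.

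The main obstacle — such as it is — is purely bookkeeping: the source/range/composition conventions for $G\ltimes_\alpha\Hc$ and for $G\rtimes_q(\,\cdot\,)$ force a specific placement of the twists $\alpha_g$ and $\alpha_{h^{-1}}$ in the structure maps of $Q$, and freeness together with the commuting-actions identities close up only for that placement; getting these conventions consistent is the whole game. As a sanity check, when $G$ is abelian this equivalence is, after Fourier transform in the $\widehat{G}$-direction, precisely the Pontryagin dual of the $C^*$-algebraic Takai statement $\widehat{G}\ltimes_{\hat\alpha}(G\ltimes_\alpha C^*(\Hc))\sim C^*(\Hc)$ recalled at the start of the section — but the bimodule $Q$ above uses no commutativity and delivers the groupoid-level statement for arbitrary $G$.
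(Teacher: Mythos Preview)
Your proof is correct and, at bottom, builds the same equivalence bimodule as the paper: a copy of $G\times\Hc_1$ carrying a left $\Hc$-action and a right $G\rtimes_q(G\ltimes_\alpha\Hc)$-action, with $G$ acting by left translation on the $G$-factor. The difference is one of packaging. The paper observes that the inclusion
\[
\phi:\Hc\hookrightarrow G\rtimes_q(G\ltimes_\alpha\Hc),\qquad \gamma\mapsto(1,1,\gamma)
\]
is an essential equivalence, so the Morita bimodule is automatically $P_\phi$ (identified there with $\{(g,g^{-1},\gamma)\}$) and all of the principality and orbit-space checks you carry out by hand come for free from the general machinery of Section~\ref{S:cohoFacts}; only the $G$-equivariance needs to be verified, and the paper notes the subtlety that $\phi$ itself is \emph{not} $G$-equivariant even though $P_\phi$ admits a compatible $G$-action. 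Your direct construction is self-contained and perhaps more transparent for a reader who has not internalized the essential-equivalence formalism, while the paper's route is shorter given the tools already in place and makes clearer why the equivalence extends to twisted versions (Theorem~\ref{T:twistedNonabTakai}), since the pullback $\phi^*$ on cocycles is already understood.
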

\begin{proof} The first statement has already been explained, and the second is just the fact that
\[ P\rtimes(G\ltimes_\alpha\Hc)\equiv G\rtimes_q(G\ltimes_\alpha\Hc) \] and the latter groupoid is manifestly a $G$-bundle.
For the third statement, note that the inclusion
\[  \Hc\simeq\{1\}\times\{1\}\times\Hc\stackrel{\phi}{\hookrightarrow}(G\rtimes_q(G\ltimes_\alpha\Hc)) \]
is an essential equivalence.

After identifying the equivalence bimodule $P_\phi$ with the space $\{(g,g^{-1},\gamma)\ |\ g\in G,\ \gamma\in\Hc_1\ \}$,
one verifies easily that the following $G$-actions make this an equivariant Morita equivalence:

For  $(h,k,\gamma)\in(G\rtimes_q(G\ltimes_\alpha\Hc))$, $\gamma\in\Hc$, and $(h,h^{-1},\gamma)\in P_\phi$, $g\in G$ acts by
\[ g\cdot(h,k,\gamma):=(gh,k,\gamma);\quad g\cdot(h,h^{-1},\gamma):=(gh,(gh)^{-1},\gamma);\quad g\cdot(\gamma):=g\gamma. \]
Note that $\phi$ itself is not an equivariant map.
\end{proof}

For our intended application to T-duality, it will be necessary to consider a nonabelian Takai duality for twisted groupoids, which we will prove here.  In this context it not possible to make a statement of equivariant Morita equivalence because in general the twisted groupoid does not admit a $G$-action.  Instead of $G$-equivariance, there is a Morita equivalence that is compatible with ``extending'' to a twisted crossed product groupoid.

\begin{thm}[\textbf{Nonabelian Takai duality for twisted groupoids}]\label{T:twistedNonabTakai} Let $G$ be a group and $(\alpha,\Hc)$ a $G$-groupoid.  Suppose we are given a 2-cocycle
$\chi\in Z^2(G\ltimes_\alpha\Hc;\Uone)$.  Define $\sigma:=\chi|_\Hc\in Z^2(\Hc,\Uone)$.  Then $\chi$ can be viewed as a 2-cocycle on $(G\rtimes_q(G\ltimes_\alpha\Hc))$ which is constant in the first $G$ variable, or as a 2-cocycle on $G\ltimes_\tau(G\rtimes_q(G\ltimes_\alpha\Hc))$ which is constant in the first two $G$-variables, and we have:
\begin{enumerate}
\item The Morita equivalence $(G\ltimes(G\rtimes_\alpha\Hc))\simeq\Hc $ of Theorem \eqref{T:nonabelianTakai} extends to a Morita equivalence of $\Uone$-gerbes:
\[ \Uone\rtimes^\chi(G\rtimes_q(G\ltimes_\alpha\Hc))\simeq\Uone\rtimes^\sigma\Hc. \]
\item The Morita equivalence $G\ltimes_\tau(G\rtimes_q(G\ltimes_\alpha\Hc))\simeq G\ltimes_\alpha\Hc $ induced from $G$-equivariance extends to a Morita equivalence of $\Uone$-gerbes:
\[ \Uone\rtimes^\chi(G\ltimes_\tau(G\rtimes_q(G\ltimes_\alpha\Hc)))\simeq\Uone\rtimes^\chi(G\ltimes_\alpha\Hc). \]
\item The first equivalence is a subequivalence of the second.
\end{enumerate}
\end{thm}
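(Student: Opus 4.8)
The plan is to deduce all three statements from the essential-equivalence machinery of Sections \eqref{S:twistedMoritaEquivalence}--\eqref{S:cohoFacts}, applied to the very essential equivalences that already underlie the untwisted Theorem \eqref{T:nonabelianTakai}. The mechanism is Corollary \eqref{C:fundamental}: an essential equivalence $\phi\colon\Gc\to\Hc$ together with a $2$-cocycle $\chi\in Z^2(\Hc;\Uone)$ produces a Morita equivalence of $\Uone$-gerbes $\Uone\rtimes^\chi\Hc\simeq\Uone\rtimes^{\phi^{*}\chi}\Gc$ whose equivalence bimodule is $\Uone\times P_\phi$; forgetting the $\Uone$-factor returns the underlying groupoid Morita equivalence $P_\phi$, so the new equivalence automatically ``extends'' the old one. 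Thus the whole theorem reduces to spotting the right essential equivalences and checking that the pulled-back cocycles are exactly the expected ones.

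For (1): from the proof of Theorem \eqref{T:nonabelianTakai}, the inclusion $\phi\colon\Hc\isom\{1\}\times\{1\}\times\Hc\hookrightarrow G\rtimes_q(G\ltimes_\alpha\Hc)$, $\gamma\mapsto(1,1,\gamma)$, is an essential equivalence with bimodule $P_\phi\isom\{\,(g,g^{-1},\gamma)\,\}$. Since the cocycle on $G\rtimes_q(G\ltimes_\alpha\Hc)$ is, by hypothesis, the extension of $\chi\in Z^2(G\ltimes_\alpha\Hc;\Uone)$ that is constant in the first $G$-coordinate, pulling it back along $\phi$ simply evaluates $\chi$ on the subgroupoid $\{1\}\ltimes\Hc\isom\Hc$; hence $\phi^{*}\chi=\chi|_\Hc=\sigma$, with equality on the nose. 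Corollary \eqref{C:fundamental} then yields $\Uone\rtimes^\chi(G\rtimes_q(G\ltimes_\alpha\Hc))\simeq\Uone\rtimes^\sigma\Hc$, with equivalence bimodule $\Uone\times P_\phi$, and by the remark above this extends the groupoid equivalence of Theorem \eqref{T:nonabelianTakai}.

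For (2): I would first note that $G\rtimes_q(G\ltimes_\alpha\Hc)$ is exactly the generalized principal bundle $G\rtimes_\rho\Gc'$ of Example \eqref{ex:bundlegroupoid} for the groupoid $\Gc':=G\ltimes_\alpha\Hc$ and the quotient homomorphism $\rho:=q\colon G\ltimes_\alpha\Hc\to(G\arrows *)$, and that $G\ltimes_\tau(G\rtimes_q(G\ltimes_\alpha\Hc))$ is then the crossed product $G\ltimes(G\rtimes_\rho\Gc')$ for the translation action $\tau$. So Proposition \eqref{P:essentialEquivs}(3), applied with the trivial normal subgroup $N=\{e\}$ (for which $G/N=G$ and $N\rtimes^{\delta\rho}\Gc'=\Gc'$), shows that the inclusion $\iota\colon G\ltimes_\alpha\Hc\hookrightarrow G\ltimes(G\rtimes_\rho\Gc')$, $(h,\gamma)\mapsto(h,e,(h,\gamma))$, is an essential equivalence, and its bimodule $P_\iota$ realizes the Morita equivalence appearing in (2). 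Exactly as in (1), the cocycle on $G\ltimes_\tau(G\rtimes_q(G\ltimes_\alpha\Hc))$ is the extension of $\chi$ constant in the first two $G$-coordinates, so $\iota^{*}\chi=\chi$ on $G\ltimes_\alpha\Hc$, and Corollary \eqref{C:fundamental} gives $\Uone\rtimes^\chi(G\ltimes_\tau(G\rtimes_q(G\ltimes_\alpha\Hc)))\simeq\Uone\rtimes^\chi(G\ltimes_\alpha\Hc)$, with equivalence bimodule $\Uone\times P_\iota$.

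For (3): one has the inclusions of twisted groupoids $(\Hc,\sigma)\isom(\{1\}\ltimes\Hc,\sigma)\hookrightarrow(G\ltimes_\alpha\Hc,\chi)$ and $(G\rtimes_q(G\ltimes_\alpha\Hc),\chi)\isom(\{1\}\ltimes(G\rtimes_q(G\ltimes_\alpha\Hc)),\chi)\hookrightarrow(G\ltimes_\tau(G\rtimes_q(G\ltimes_\alpha\Hc)),\chi)$, under which $\phi$ is precisely the restriction of $\iota$ (both carry $\gamma$ to $(1,1,1,\gamma)$). I would then check that restricting the bimodule $\Uone\times P_\iota$ of part (2) over the corresponding sub-spaces of units returns the bimodule $\Uone\times P_\phi$ of part (1) with its $\Uone$-twist: $P_\iota$ sits over $\{1\}\times\Hc_0$ as $P_\phi$, the two pairs of principal module structures restrict compatibly, and the moment-map sections used to build the $\Uone$-twists match up. This is the precise content of ``subequivalence'', and the verification is a direct unwinding of the definitions. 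The only real work anywhere in the proof is this sort of bookkeeping --- keeping the three nested $G$-coordinates straight while confirming that $\phi^{*}\chi=\sigma$ and $\iota^{*}\chi=\chi$ are honest equalities (so Corollary \eqref{C:fundamental} applies with no intervening coboundary and the bimodules stay simple), that $P_\iota$ indeed matches the ``$G$-equivariance'' Morita equivalence of Theorem \eqref{T:nonabelianTakai}, and that the restriction of bimodules in (3) works as claimed. Nothing here is conceptually deep: (1) and (2) are the statements of Theorem \eqref{T:nonabelianTakai} ``dressed'' with the twisting machinery of Section \eqref{S:twistedMoritaEquivalence}, and (3) records that the dressing is functorial along $\Hc\hookrightarrow G\ltimes_\alpha\Hc$.
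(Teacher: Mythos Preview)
Your approach is correct and essentially the same as the paper's. In both cases the bimodules are $\Uone\times P_\phi$ for (1) and $\Uone\times G\times P_\phi$ for (2), with (3) the evident restriction to $\Uone\times\{1\}\times P_\phi$; the paper simply names these bimodules directly, while you obtain them by invoking Corollary~\ref{C:fundamental}. The one cosmetic difference is in (2): the paper produces the underlying bimodule $G\times P_\phi$ straight from the $G$-equivariance via Example~\ref{ex:transformationgroupoid}, whereas you realize the same equivalence as the essential equivalence $\iota$ of Proposition~\ref{P:essentialEquivs}(3) (with $N=\{e\}$) and take $P_\iota$. Since $P_\iota\simeq G\times P_\phi$ canonically, this is the same bimodule, and the compatibility you flag (``that $P_\iota$ indeed matches the $G$-equivariance Morita equivalence'') is immediate once that identification is made.
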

\begin{proof}The first statement follows just as in the proof of the Morita equivalence of $G\rtimes_q(G\ltimes_\alpha\Hc)$ and $\Hc$; this time the bimodule is $\Uone\times P_\phi$ (using the notation from the proof of Theorem \eqref{T:nonabelianTakai}).

For the second statement, note that $G\times P_\phi$ is the $G\ltimes_\tau(G\rtimes(G\ltimes_\alpha\Hc))$- $G\ltimes_\alpha\Hc$-bimodule, the bimodule structure being given as in Example \eqref{ex:transformationgroupoid}.  Then $\Uone\times G\times P_\phi$ is the desired Morita bimodule.

For the third statement, note that restricting to $\Uone\times\{1\}\times P_\phi\subset \Uone\times G\times P_\phi$ recovers the Morita equivalence of the first statement as a subequivalence of the second.
\end{proof}

\section{ Nonabelian noncommutative T-duality }\label{S:fullTduality}
Remembering our convention to call a group \emph{nonabelian} when it is not commutative and reserve the word  \emph{noncommutative} for a noncommutative space in the sense of noncommutative geometry, we define the following extensions to T-duality.
\begin{defn} Let $N$ be a closed normal subgroup of a locally compact group $G$.  There is a canonical equivalence between the data contained in:
\begin{itemize}
\item a $G$-equivariant $\Uone$-gerbe on a generalized $G/N$-bundle, and
\item a $\Uone$-gerbe on an $N$-gerbe, with canonical right module.
\end{itemize}
The interpolation between the two objects is described below, and will be called \textbf{nonabelian noncommutative T-duality} whenever the interpolating procedure induces an isomorphism in $K$-theory.
\end{defn}
\begin{rem}
Classical T-duality was a duality between gerbes on generalized principle bundles.  More precisely, for abelian groups $G$, the duality associated to a $\Uone$-extension of a groupoid of the form $G/N\rtimes_\rho\Gc$, a new $\Uone$-extension of a groupoid of the form $\widehat{N}\rtimes_{\bar{\lambda}}\Gc$.  Now we will instead use groupoids of the form $G\rtimes N\rtimes^{\delta\rho}\Gc$ or more generally $G\rtimes G\times_{G/N,\rho}\Gc$, defined in Example \eqref{Ex:bestgroupoid2}.  These are equivariantly Morita equivalent to the old kind.  The reason for this change is that while every gerbe on a classically T-dualizable pair of bundles can be presented by an equivariant 2-cocycle on a groupoid of the form $G/N\rtimes_\rho\Gc$, this is not the case in general.  On the other hand, the following fact will be proved in Section \eqref{S:Brauer}:
\begin{center}
\emph{ Every $G$-equivariant stack theoretic gerbe on a generalized principal $G/N$-bundle admits a presentation as a $\Uone$-extension of a groupoid of the form $G\rtimes G\times_{G/N,\rho}\Gc$.}
\end{center}
This is not necessarily obvious.  In fact without the $G$-equivariance condition, not every gerbe on a generalized $G/N$-bundle admits such a presentation; it is the $G$-equivariance that forces this.
\end{rem}
\noindent\underline{Procedure for nonabelian T-dualization.}  The initial data for nonabelian T-duality will be a $G/N$-bundle with equivariant 2-cocycle:
\begin{equation}\label{E:nonabTdualityData}
(G\rtimes N\rtimes^{\delta\rho}\Gc,(\sigma,\lambda,\beta)\in Z^2_G(G\rtimes N\rtimes^{\delta\rho}\Gc;\Uone)).
\end{equation}
The nonabelian T-dual is obtained in the following two steps:  \newline\newline
\underline{Step 1} Pass to the crossed product groupoid, together with its canonical right module $P$ of Theorem \eqref{T:nonabelianTakai} and the 2-cocycle $\psi$ which is the image of $(\sigma,\lambda,\beta)$ in $Z^2(G\ltimes G\rtimes N\rtimes^{\delta\rho}\Gc;\Uone)$ :
\[ (G\ltimes(G\rtimes N\rtimes^{\delta\rho}\Gc),\psi,P).\]
\underline{Step 2} Pass to the Morita equivalent system:
\[ (N\rtimes^{\delta\rho}\Gc;\iota^*\psi,\iota^*P) \]
where $\iota$ is the essential equivalence (see Proposition \eqref{P:essentialEquivs}):
\[ \iota:\ (N\rtimes^{\delta\rho}\Gc)\hookrightarrow G\ltimes (G\rtimes N\rtimes^{\delta\rho}\Gc) \]
\[ (n,\gamma)\mapsto(n\rho(\gamma),e,n,\gamma). \]
\ \newline\newline
\underline{Some properties of the duality}\newline\newline
(A) If $\rho:\Gc\to G/N$ does not admit a lift to a map to $G$, then the same T-dualization procedure works with $N\rtimes^{\delta\rho}\Gc$ replaced by $G\times_{G/N,\rho}\Gc$ (see Example \eqref{Ex:bestgroupoid1}).
\newline\newline
(B)\quad What we are describing is indeed a duality, in the sense that we can recover the initial system \eqref{E:nonabTdualityData} by inducing the groupoid via its canonical module $P$.  This fact is the content of twisted nonabelian Takai duality for groupoids, Theorem \eqref{T:twistedNonabTakai}.
\newline\newline
(C)\quad There is an isomorphism of twisted $K$-theory
\[ K^\bullet(G\rtimes N\rtimes^{\delta\rho}\Gc;\sigma)\simeq
   K^{\bullet+dimG}(G\ltimes(G\rtimes N\rtimes^{\delta\rho}\Gc),\psi)\simeq K^{\bullet+dimG}(N\rtimes^{\delta\rho}\Gc;\iota^*\psi) \]
whenever $G$ satisfies Connes' Thom isomorphism theorem, in particular whenever $G$ is a (finite dimensional) 1-connected solvable Lie group.
\newline\newline
(D)\quad This construction is Morita invariant in the appropriate sense.  That is, if we choose two representatives for the same generalized principal bundle with equivariant gerbe, the two resulting dualized objects present the same $N$-gerbe with equivariant gerbe.  This is proved in Section \eqref{S:Brauer}.
\newline\newline
(E)\quad The dual can be interpreted as a family of noncommutative groups.  Indeed, suppose that in the above situation $\Gc$ is a \v{C}ech groupoid of a locally finite cover of a space $X$.  Let us look at the fiber of the dual over a point $m\in X$.  This fiber corresponds to $N\rtimes^{\delta\rho}(\Gc|_m)$, where $\Gc|_m$ is the restriction of $\Gc$ to the chosen point.  $\Gc|_m$ is a pair groupoid which is a finite set of points (there is one arrow for each double intersection $U_i\cap U_j$ that contains $m$ and one object for each element of the cover that contains $m$).  Any inclusion of the trivial groupoid $(*\arrows*)\hookrightarrow\Gc|_m$ induces an essential equivalence $\phi:(N\arrows *)\hookrightarrow N\rtimes^{\delta\rho}(\Gc|_m)$ which induces an isomorphism $\phi^*$ in cohomology by Corollary \eqref{C:fundamental2}.   So twisted groupoids $((N\rtimes^{\delta\rho}\Gc)|_m,(\iota^*\psi)|_m)$ and $(N\arrows *,\phi^*(\iota^*\psi|_m))$ are equivalent.  In particular, $C^*(N\rtimes^{\delta\rho}\Gc|_m;\iota^*\psi)$ is Morita equivalent to $C^*(N;\phi^*(\iota^*\psi))$, and the latter is a standard presentation of a noncommutative (and nonabelian, if desired) dual group!  For example if $N\simeq \Zb$ we get noncommutative n-dimensional tori.  So the T-duality applied to $G/N$-bundles produces $N$-gerbes that are fibred in what should be interpreted as noncommutative versions of the dual group $(G/N)^\vee$!

\section{ The equivariant Brauer group }\label{S:Brauer}
In this section we will describe the elements of the $G$-equivariant Brauer group of a principal ``$G/N$-stack''.  First recall that the Brauer group $\Br(X)$ of a space $X$ is the set of isomorphism classes of stable separable continuous trace $C^*$-algebras with spectrum $X$.  The famous Dixmier-Douady classification says that each such algebra is isomorphic to the algebra $\Gamma_0(X;E)$ of sections that vanish at infinity of a bundle $E$ of compact operators.  Since such bundles can be described by transition functions with values in $\Aut K\simeq \puh$, there is an isomorphism $H^1(X;\Aut K)\simeq\Br(X)$.  Since $H^1(X;\Aut K)\simeq H^2(X;\Uone)$, $Br(X)$ can also be taken to classify $\Uone$-gerbes on $X$.

If $X$ is a $G$-space, one can talk about the equivariant Brauer group $\Br_G(X)$, which can be most simply defined as the equivalence classes of $G$-equivariant bundles of compact operators under the equivalence of isomorphism and outer equivalence of actions.  We intend to show that this group also corresponds to $G$-equivariant gerbes on $X$.

Generalizing the case of spaces $X$, one can consider the Brauer group of a presentable topological stack $\Xc$ (that is, a stack $\Xc$ which is equivalent to $\Prin_\Gc$ for some groupoid $\Gc$, as in Section \eqref{S:gerbes}).  So let $\Xc$ be a presentable topological stack.  Then a vector bundle on $\Xc$ is a vector bundle on a groupoid $\Gc$ presenting $\Xc$.  A vector bundle on a groupoid $\Gc$ is a (left) $\Gc$-module $E\to\Gc_0$ which is a vector bundle on $\Gc_0$ and such that $\Gc$ acts linearly (in the sense that the action morphism $s^*E\to r^*E$ is a morphism of vector bundles over the space $\Gc_1$).
An $\Hc$-$\Gc$-Morita equivalence bimodule $P$ makes $P*E\to\Hc_0$ a vector bundle on $\Hc$.  In exactly the same way, one has the notion of a bundle of algebras on a stack, in particular one has bundles of compact operators on a stack.

Finally, the stack with $G/N$-action associated to generalized principal bundle $G/N\rtimes_\rho\Gc$ is what should be called a principal $G/N$-stack.  This data can be presented in at least two ways, for example as a stack over $\Prin_{G/N}$
\[ \Prin_\Gc\longrightarrow \Prin_{G/N}, \]
or as a stack over $\Prin_\Gc$
\[ \Prin_{G/N\rtimes_\rho\Gc}\longrightarrow\Prin_\Gc. \]

Now define the $G$-equivariant Brauer group $\Br_G(\Prin_{G/N\rtimes_\rho\Gc})$ to be the isomorphism classes $G$-equivariant bundles of compact operators on $\Prin_{G/N\rtimes_\rho\Gc}$.
We will show that whenever $\Gc_0$ is contractible to a set of points,
\[ \Br_G(\Prin_{G/N\rtimes_\rho\Gc})\simeq H^1(G\times_{G/N}\Gc;\Aut K). \]
Of course if $\Gc_0$ isn't contractible we can refine $\Gc$ so that it is, thus we will always make that assumption.

So suppose $E$ is a bundle on $\Prin_{G/N\rtimes_\rho\Gc}$.  It may initially be presented as a module over any groupoid $\Hc$ which is equivariantly Morita equivalent to $(G/N\rtimes_\rho\Gc)$, for example if $\Gc$ is a \v{C}ech groupoid one could imagine that $E$ is a bundle on the actual space  $Q\simeq(G/N\times\Gc_0)/(G/N\rtimes_\rho\Gc)$.  We would like $E$ to be presented on the groupoid $G\rtimes(G\rtimes_{G/N,\rho}\Gc)$, (as in Example \eqref{Ex:bestgroupoid2}), so choose a Morita equivalence ($(G\rtimes G\rtimes_{G/N,\rho}\Gc)$-$\Hc$)-bimodule $P$, and replace $E$ by $\tilde E:= P*E$.  Remember that no data is lost here because $P^{\opp}*P*E\simeq E$.

Just for the sake of not having too many $G's$, assume that $\rho$ admits a lift to $G$ so that there is an isomorphism
\[ (G\rtimes G\rtimes_{G/N,\rho}\Gc)\simeq (G\rtimes N\rtimes^{\delta\rho}\Gc) \]
as in Example \eqref{Ex:bestgroupoid2}.  The general case when there is no lift works in the exact same way.

If $E$ is $G$-equivariant then so is $\tilde E$.  Let us suppose this is the case, meaning precisely that $\tilde E$ is a $(G\rtimes N\rtimes^{\delta\rho}\Gc)$-module, and $\tilde E$ has a $G$-action which is equivariant with respect to the translation action of $G$ on $(G\rtimes N\rtimes^{\delta\rho}\Gc)$.

Keep in mind that in particular $\tilde E$ is a bundle over the objects $G\times\Gc_0$ of the groupoid.
Then the restriction of $\tilde E$ to $\{e\}\times\Gc_0\subset G\times\Gc_0$, denoted $E_0$, is trivializable since $\Gc_0$ is contractible.  So assume that $E_0=\Gc_0\times K$.  But then the whole of $\tilde E$ is trivializable since it is a $G$-equivariant bundle over a space with free $G$-action.  For example a trivialization is given by:
\[ G\times E_0\to \tilde E\qquad (g,\xi)\mapsto g\xi.  \]
So assume that $\tilde E= G\times\Gc_0\times K$.

Note that being a $G$-equivariant $(G\rtimes N\rtimes^{\delta\rho}\Gc)$-module is the same as being a $G\ltimes(G\rtimes(N\rtimes^{\delta\rho}\Gc))$-module.  Since $\tilde E$ is trivial as a bundle, it is classified by the action of $G\ltimes(G\rtimes N\rtimes^{\delta\rho}\Gc),$ and this is given by a homomorphism
\[ \pi:G\ltimes(G\rtimes N\rtimes^{\delta\rho}\Gc)\to\Aut K \]
such that the groupoid action is
\[  (G\ltimes(G\rtimes N\rtimes^{\delta\rho}\Gc))\times_{G\times\Gc_0}\tilde E\longrightarrow \tilde E \]
\[ ((g_1,g_2,n,\gamma),(g_1^{-1}g_2n\rho(\gamma),s\gamma,k))\mapsto (g_2,r\gamma,\pi(g_1,g_2,n,\gamma)k) \]
for $(g_1^{-1}g_2n\rho(\gamma),s\gamma,k)\in\tilde E$.

Two such actions, given by $\pi$ and $\pi'$, are outer equivalent if and only if
$\pi$ and $\pi'$ are cohomologous, and this shows that
\[ \Br_G(\Prin_{G/N\rtimes_\rho\Gc})\simeq H^1(G\ltimes G\rtimes N\rtimes^{\delta\rho}\Gc;\Aut K). \]
But the results of Section \eqref{S:cohoFacts} imply that the inclusion
\[ \iota:N\rtimes^{\delta\rho}\Gc\to G\ltimes G\rtimes N\rtimes^{\delta\rho}\Gc\qquad (n,\gamma)\mapsto(n\rho(\gamma),e,n,\gamma) \]
induces a quasi-isomorphism with quasi-inverse the quotient map
\[q:G\ltimes G\rtimes N\rtimes^{\delta\rho}\Gc\to N\rtimes^{\delta\rho}\Gc \]
therefore we have:
\begin{prop} Let $N$ be a closed normal subgroup of a locally compact group $G$, let $\Gc$ be a groupoid with $\Gc_0$ contractible, and let $\rho:\Gc\to G/N$ be a homomorphism.  Then
\[ \Br_G(\Prin_{G/N\rtimes_\rho\Gc})\simeq H^1(G\ltimes G\rtimes N\rtimes^{\delta\rho}\Gc;\Aut K)
     \simeq H^1(N\rtimes^{\delta\rho}\Gc;\Aut K). \]
More generally, $N\rtimes^{\delta\rho}\Gc$ can be replaced by $G\rtimes_{G/N,\rho}\Gc$.
\end{prop}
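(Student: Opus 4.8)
The plan is to assemble the Proposition from the discussion preceding it, organised into three moves: (i) reduce the equivariant Brauer group of the stack to equivariant bundles of compact operators on a convenient groupoid presentation; (ii) trivialise such a bundle and extract a $1$-cocycle with values in $\Aut K$; (iii) transport the resulting cohomology along an essential equivalence. First I would fix a presentation: since refining $\Gc$ does not change $\Prin_\Gc$, assume $\Gc_0$ is contractible to a set of points, and, running the argument first in the case that $\rho$ lifts to a continuous map $\Gc\to G$, replace $G/N\rtimes_\rho\Gc$ by the equivariantly Morita equivalent groupoid $G\rtimes(G\rtimes_{G/N,\rho}\Gc)\simeq G\rtimes N\rtimes^{\delta\rho}\Gc$ of Example \eqref{Ex:bestgroupoid2}. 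Because a Morita equivalence bimodule $P$ carries bundles of compact operators on one groupoid to bundles on the other via $E\mapsto P*E$, with $P^{\opp}*P*E\simeq E$, and because this operation preserves $G$-equivariance, the $G$-equivariant Brauer group of $\Prin_{G/N\rtimes_\rho\Gc}$ may be computed on $G\rtimes N\rtimes^{\delta\rho}\Gc$.

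Next I would trivialise. A $G$-equivariant bundle of compact operators $\tilde E$ on $G\rtimes N\rtimes^{\delta\rho}\Gc$ is in particular a bundle over the object space $G\times\Gc_0$; its restriction $E_0$ to $\{e\}\times\Gc_0$ is trivialisable because $\Gc_0$ is contractible, and then the map $G\times E_0\to\tilde E$, $(g,\xi)\mapsto g\xi$, trivialises all of $\tilde E$ since $G$ acts freely on the base $G\times\Gc_0$. Once $\tilde E\cong G\times\Gc_0\times K$ as a bundle, the remaining data — a $G$-equivariant $(G\rtimes N\rtimes^{\delta\rho}\Gc)$-module structure, equivalently a $G\ltimes(G\rtimes N\rtimes^{\delta\rho}\Gc)$-module structure — is exactly a groupoid homomorphism $\pi\colon G\ltimes(G\rtimes N\rtimes^{\delta\rho}\Gc)\to\Aut K$, i.e. a class in $Z^1(G\ltimes G\rtimes N\rtimes^{\delta\rho}\Gc;\Aut K)$. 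Outer equivalence of two such actions $\pi,\pi'$ translates precisely into $\pi$ and $\pi'$ being cohomologous as $\Aut K$-valued $1$-cochains, which yields the first isomorphism $\Br_G(\Prin_{G/N\rtimes_\rho\Gc})\simeq H^1(G\ltimes G\rtimes N\rtimes^{\delta\rho}\Gc;\Aut K)$.

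Finally I would invoke the essential equivalence $\iota\colon N\rtimes^{\delta\rho}\Gc\hookrightarrow G\ltimes G\rtimes N\rtimes^{\delta\rho}\Gc$, $(n,\gamma)\mapsto(n\rho(\gamma),e,n,\gamma)$, of Proposition \eqref{P:essentialEquivs}, whose quasi-inverse is the quotient map $q$. Since $q\circ\iota=\mathrm{Id}$ and the relevant left moment maps admit continuous sections (the extra groups appearing are copies of $G$ mapping onto themselves), the results of Section \eqref{S:cohoFacts} — concretely Proposition \eqref{P:nicegroupoidFacts}(4),(5), Corollary \eqref{C:fundamental2} and Proposition \eqref{P:cohoIsoms}, whose chain homotopies are natural and operate degreewise, hence apply with the nonabelian coefficient group $\Aut K$ in degree one — show that $\iota^*$ and $q^*$ induce mutually inverse bijections on $H^1(-;\Aut K)$. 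This gives the second isomorphism, and the case in which $\rho$ does not lift is identical after replacing $N\rtimes^{\delta\rho}\Gc$ by $G\rtimes_{G/N,\rho}\Gc$ throughout, using Examples \eqref{Ex:bestgroupoid1} and \eqref{Ex:bestgroupoid2}.

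I expect the main obstacle to be the trivialisation and bookkeeping of move (ii): one must check that the functor $E\mapsto P*E$ genuinely identifies $G$-equivariant bundles of compact operators on the stack — up to isomorphism together with outer equivalence of the $G$-action — with those on $G\rtimes N\rtimes^{\delta\rho}\Gc$, and that, after trivialising $\tilde E$, the two equivalence relations in play (isomorphism of bundles and outer equivalence of actions) match exactly the coboundary relation in nonabelian $H^1(\,\cdot\,;\Aut K)$. The secondary point needing care is that the degree-one nonabelian cohomology with $\Aut K$ coefficients really is invariant under the essential equivalences used, i.e. that the homotopies of \cite{C} are natural enough to be applied with these coefficients; the rest of the argument is the routine assembly carried out above.
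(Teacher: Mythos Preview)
Your proposal is correct and follows essentially the same route as the paper: the proposition is really a summary of the discussion that precedes it, and you have reconstructed that discussion faithfully --- transporting the bundle to $G\rtimes N\rtimes^{\delta\rho}\Gc$ via $P*E$, trivialising via $(g,\xi)\mapsto g\xi$ using contractibility of $\Gc_0$ and freeness of the $G$-action, reading off the $\Aut K$-valued $1$-cocycle, matching outer equivalence with the coboundary relation, and then invoking the essential equivalence $\iota$ with quasi-inverse $q$ from Section~\ref{S:cohoFacts}. Your explicit flag that the homotopies of Section~\ref{S:cohoFacts} must be checked to work with the nonabelian coefficient group $\Aut K$ in degree one is a point the paper passes over silently, but this is a matter of care rather than a divergence in strategy.
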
\vspace{-0.1in}
\begin{flushright}$\square$\end{flushright}

The meaning of the isomorphism on the right is that $\pi$ can be taken constant in its two $G$ variables.  In fact, one can construct such a $\pi$ directly; simply note that the whole situation is determined by the module structure of $E_0$, and that it is precisely $\iota(N\rtimes^{\delta\rho}\Gc)$ that preserves this subspace.  An example of this construction is carried out in the proof of Theorem \eqref{T:appendixThm}.

Now let us explain the relationship between gerbes and bundles of compact operators.

If $N$ is a discrete group then there is a connecting homomorphism which is an isomorphism
\[ H^1(N\rtimes^{\delta\rho}\Gc;\Aut K)\simeq H^2(N\rtimes^{\delta\rho}\Gc;\Uone). \]
If $N$ is not discrete then there is the possibility that only a Borel connecting homomorphism can be chosen.  Rather than tread into the territory of Borel cohomology for groupoids, we point out that for any groupoid $\Hc$, the $\Uone$-gerbe associated to $\pi\in Z^1(\Hc;\Aut K)$ is just
\[ U(\hf)\times_{\Aut K,\pi}\Hc,  \]
the groupoid constructed in Example \eqref{Ex:bestgroupoid1}.  It is a $\Uone$-gerbe on $\Hc$.
To make that last fact more clear, note that as a space, the arrows of this groupoid form a possibly nontrivial $\Uone$-bundle over $\Hc_1$ and any global section of the bundle determines an isomorphism
\[ U(\hf)\times_{\Aut K,\pi}\Hc\simeq \Uone\rtimes^{\delta\pi}\Hc, \]
as was pointed out in Example \eqref{Ex:bestgroupoid1}, and the latter object is clearly a $\Uone$ gerbe.

At $C^*$-algebra level there is also a relationship between gerbes and bundles of compact operators.
It is shown in Lemma \eqref{L:appendixLemma2} that a global section of the $\Uone$-bundle $(U(\hf)\times_{\Aut K,\pi}\Hc_1)\to\Hc_1$ induces a Morita equivalence
\[ C^*(\Hc,\delta\pi)\Morita\Gamma(\Hc;E(\pi)) \]
where $E(\pi)$ is the trivial bundle $\Hc_0\times K$ with $\Hc$-module structure given by $\pi$.
Independent of the existence of a global section, there is a Morita equivalence
\[ \Gamma(\Hc;Fund(U(\hf)\times_{\Aut K,\pi}\Hc))\Morita\Gamma(\Hc;E(\pi)) \]
where $Fund(U(\hf)\times_{\Aut K, \pi}\Hc)$ denotes the associated line bundle to the $\Uone$-bundle (viewed as a bundle of (rank 1) $C^*$-algebras) $(U(\hf)\times_{\Aut K,\pi}\Hc_1)\To\Hc_1$.

So in this section we saw that nonabelian noncommutative T-duality as presented in Section \eqref{S:fullTduality} can be used to describe a dual to a $G$-equivariant gerbe presented on \emph{any} groupoid $\Hc$ such that $\Hc$ describes a principal ``$G/N$-stack''; this being because any such gerbe could also be presented on $(G\rtimes N\rtimes^{\delta\rho}\Gc)$, as it was in Section \eqref{S:fullTduality}.  (Except for the situation in which $\rho$ and possibly $\pi$ do not admit lifts to $G$ or $U(\hf)$ respectively, but we also saw how to modify the setup in these situations).

Finally, as promised:
\begin{cor} Nonabelian T-duality is Morita invariant. \end{cor}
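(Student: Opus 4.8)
The plan is to deduce the corollary from the equivariant Brauer group computation of Section \ref{S:Brauer}, exploiting the fact that $\Br_G(\Prin_{G/N\rtimes_\rho\Gc})$ is by construction an invariant of the principal $G/N$-stack and not of any particular groupoid presentation of it. Concretely: the input to the nonabelian T-dualization of Section \ref{S:fullTduality} is a $G$-equivariant $\Uone$-gerbe on a generalized principal $G/N$-bundle, equivalently (by the gerbe/bundle-of-compact-operators dictionary at the end of Section \ref{S:Brauer}, after refining so that $\Gc_0$ is contractible) a class in $H^1(G\ltimes G\rtimes N\rtimes^{\delta\rho}\Gc;\Aut K)$, and the proposition of Section \ref{S:Brauer} identifies this group with $\Br_G(\Prin_{G/N\rtimes_\rho\Gc})$ via an isomorphism that factors through the stack $\Prin_{G/N\rtimes_\rho\Gc}$. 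Two presentations of ``the same generalized principal $G/N$-bundle with $G$-equivariant gerbe'' are, by definition, $G$-equivariantly Morita equivalent presentations of the same principal $G/N$-stack carrying the same equivariant Brauer class, so they are assigned the same element of $\Br_G(\Prin_{G/N\rtimes_\rho\Gc})$.

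First I would identify the two dualization steps of Section \ref{S:fullTduality} with the second isomorphism of that proposition. Step 1 passes to the crossed-product groupoid $G\ltimes(G\rtimes N\rtimes^{\delta\rho}\Gc)$ and replaces the cocycle by its image $\psi$ under the chain map $F$ of \eqref{Eq:equicohoCrossedCoho}; Step 2 applies $\iota^*$ along the essential equivalence $\iota$ of Proposition \ref{P:essentialEquivs}, whose effect on cohomology is precisely the quasi-isomorphism
\[ H^1(G\ltimes G\rtimes N\rtimes^{\delta\rho}\Gc;\Aut K)\ \mTo{\iota^*}\ H^1(N\rtimes^{\delta\rho}\Gc;\Aut K) \]
appearing in the proposition (by Corollary \ref{C:fundamental} and Proposition \ref{P:cohoIsoms}, and using that a $G$-equivariant $(G\rtimes N\rtimes^{\delta\rho}\Gc)$-module is the same as a $G\ltimes(G\rtimes N\rtimes^{\delta\rho}\Gc)$-module). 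Since cohomologous $\Aut K$-cocycles on a groupoid $\Hc$ give outer-equivalent actions, hence isomorphic bundles of compact operators and Morita equivalent $\Uone$-gerbes $U(\hf)\times_{\Aut K}\Hc$, and since Theorem \ref{p:twistedGroupoidMoritaEq} upgrades ``cohomologous cocycles on $G$-equivariantly Morita equivalent groupoids'' to ``Morita equivalent $\Uone$-gerbes'', the dualized $\Uone$-gerbe over the $N$-gerbe $N\rtimes^{\delta\rho}\Gc$ is determined, up to Morita equivalence, by the class in $\Br_G(\Prin_{G/N\rtimes_\rho\Gc})$ alone. The case where $\rho$ (or the classifying $\Aut K$-cocycle) does not lift is handled verbatim by replacing $N\rtimes^{\delta\rho}\Gc$ with the gerbe-without-section $G\times_{G/N,\rho}\Gc$ of Example \ref{Ex:bestgroupoid1}, exactly as in properties (A) and (D) of Section \ref{S:fullTduality}.

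The hard part will be carrying the canonical right module and the equivariant structure through all of these equivalences rather than merely matching the underlying twisted groupoids: one must check that a $G$-equivariant Morita equivalence of the two input presentations induces, after Steps 1 and 2, a Morita equivalence of $\Uone$-gerbes on $N$-gerbes that intertwines the canonical modules $\iota^*P$ and $\iota^*P'$ of Theorem \ref{T:nonabelianTakai}, i.e. that the equivalence can be chosen compatibly with the ``extension to a twisted crossed product groupoid'' of Theorem \ref{T:twistedNonabTakai}(3). Since $P = (G\times\Gc_0\to\Gc_0)$ is manifestly natural under refinement and under the essential equivalences of Proposition \ref{P:essentialEquivs}, I expect this to reduce to tracking the bimodule $\Uone\times G\times P_\phi$ of Theorem \ref{T:twistedNonabTakai} through a composite of the Morita equivalences already constructed; once that compatibility is in hand, twisted nonabelian Takai duality (Theorem \ref{T:twistedNonabTakai}) certifies that the assignment input $\leftrightarrow$ dual is a genuine duality at the level of Morita equivalence classes, which is the assertion of the corollary.
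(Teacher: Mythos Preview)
Your approach is essentially the paper's: use the Brauer group computation of Section~\ref{S:Brauer} to normalize any presentation to one on $G\rtimes N\rtimes^{\delta\rho}\Gc$ with cocycle constant in $G$ (equivalently, coming from $\pi\in Z^1(N\rtimes^{\delta\rho}\Gc;\Aut K)$), and then observe that a $G$-equivariant Morita equivalence between two such normalized gerbes is the same thing as a Morita equivalence between the restricted gerbes over the $N$-gerbes $N\rtimes^{\delta\rho}\Gc$ and $N\rtimes^{\delta\rho'}\Gc'$. The paper's proof is a two-sentence version of your first two paragraphs.

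One difference worth flagging: your third paragraph, about transporting the canonical right module $\iota^*P$ through the equivalence so that nonabelian Takai duality (Theorem~\ref{T:twistedNonabTakai}) really closes the loop, is more than the paper's own proof undertakes. The paper's corollary, as stated via property~(D) of Section~\ref{S:fullTduality}, only asserts that the dualized objects present the same $\Uone$-gerbe on an $N$-gerbe, and the proof stops once the two restricted gerbes are seen to be Morita equivalent; it does not verify compatibility of the canonical modules. Your instinct that this deserves checking is sound, and your sketch (naturality of $P=G\times\Gc_0$ under refinement and under the essential equivalences of Proposition~\ref{P:essentialEquivs}) is the right way to do it, but be aware that you are proving a slightly stronger statement than the paper does.
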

\begin{proof} This is easy because we may assume that any gerbe on a generalized principal bundle is presented on a groupoid of the form $G\rtimes N\rtimes^{\delta\rho}\Gc$ and that the gerbe is defined via a cocycle which is constant in $G$.  But then if two such presentations are given, corresponding to $\pi\in Z^1(N\rtimes^{\delta\rho}\Gc;\Aut K)$ and $\pi'\in Z^1(N\rtimes^{\delta\rho'}\Gc';\Aut K)$ it is clear that the gerbes $U(\hf)\times_{\Aut K,\pi}G\rtimes N\rtimes^{\delta\rho}\Gc$ and $U(\hf)\times_{\Aut K,\pi'}G\rtimes N\rtimes^{\delta\rho'}\Gc'$ are $G$-equivariantly Morita equivalent if and only if $U(\hf)\times_{\Aut K,\pi} N\rtimes^{\delta\rho}\Gc$ and $U(\hf)\times_{\Aut K,\pi'} N\rtimes^{\delta\rho'}\Gc'$ are Morita equivalent.
\end{proof}

\section{Conclusion}
A natural direction for future study here is to consider the case when both sides of the duality are fibred in noncommutative groups (in the sense of noncommutative geometry).  Interestingly, a completely new phenomenon arises in this context: the gerbe data, the 2-cocycles that is, become noncompactly supported distributions and it is necessary to multiply them.  We have manufactured examples in which this can be done, that is when the singular support of the distributions do not intersect, but our present methods do not provide a general method for describing a T-dual pair with both sides families of noncommutative groups.

Another direction to look is the case of groupoids for which the $G/N$-action is only free on a dense set.  This corresponds to singularities in the fibers of a bundle of groups.  The groupoid approach to T-duality seems well suited for this.  On the other hand for some other types of singularities in fibers, notably singularities which destroy the possibility of a global $G/N$-action, the groupoid approach will not apply at all.  It will be interesting to see if this problem can be fixed.

A third direction these methods can take is to consider complex structures on the groupoids and make the connection between topological T-duality and the T-duality of complex geometry (as in \cite{DP}).  We have initiated this project in \cite{BD}.

Lastly, it will of course be very nice to find some physically motivated examples of nonabelian T-duality.

\appendix\section{Connection with the Mathai-Rosenberg approach}
The goal of this section is to describe the connection between our approach and the Mathai-Rosenberg approach to T-duality.

Let us begin with a summary of the approach of Mathai and Rosenberg \cite{MR}.  One begins with the data of a principal torus bundle $P\to X$ over a space $X$ and a cohomology class $H\in H^3(P;\Zb)$ called the $H$-flux.  The procedure for T-dualizing is as follows.
\begin{enumerate}
\item Pass from the data $(P,H)$ to a $C^*$-algebra $A(P,H)$.  To do this one traces $H$ through the isomorphisms
\begin{equation}\label{E:cohoIsoms}
H^3(P;\Zb)\simeq \check{H}^2(P;\Uone)\simeq\check{H}^1(P;\Aut(K))
\end{equation}
(here $K=K(\hf)$ is the algebra of compact operators on a fixed separable Hilbert space $\hf$) to get an $\Aut(K)$-valued \v{C}ech 1-cocycle.  This cocycle gives transition functions for a bundle of compact operators over $P$, and $A(P,H)$ is the $C^*$-algebra of continuous sections of this bundle which vanish at infinity.  According to the Dixmier-Douady classification, one can recover the torus bundle $P$ and the $H$-flux from the $A(P,H)$.
\item Next, writing the torus as a vector space modulo full rank lattice, $T=V/\Lambda$, one tries to lift the action of $V$ on $P$ to an action of $V$ on $A(P,H)$.  Assuming one exists, choose an action $\alpha:V\to \Aut(A(P,H))$ lifting the principal bundle action.  If no action exists, the data is not T-dualizable.
\item Now the T-dual of $A=A(P,H)$ is simply the crossed product algebra, $V\ltimes_\alpha A$, (or perhaps the T-dual is the spectrum and Dixmier-Douady invariant $(P^\vee,H^\vee)$ of this algebra, if $V\ltimes_\alpha A$ is of continuous trace) and the problem of producing a T-dual object is reduced to understanding this crossed product algebra.
\item There are two scenarios for describing the crossed product, depending on whether a certain obstruction class, called the Mackey obstruction $M(\alpha)$, vanishes.  When $M(\alpha)=0$ the crossed product algebra is isomorphic to one of the form $A(P^\vee,H^\vee)$ for $P^\vee\to X$ a principal dual-torus bundle and $H^\vee\in H^3(P^\vee,\Zb)$.  The transition functions for $P^\vee$ are obtained from the so-called Phillips-Raeburn obstruction of the action and there exist explicit formulas for $H^\vee$ in terms of the data $(P,H,\alpha)$.  This understanding of the crossed product is a result of work by Mackey, Packer, Phillips, Raeburn, Rosenberg, Wassermann, Williams and others (in the subject of crossed products of continuous trace algebras) which is referenced in \cite{MR}.

When $M(\alpha)\neq 0$, the crossed product algebra was shown in \cite{MR} to be a continuous field of stable noncommutative (dual) tori over $X$.
\end{enumerate}
We claim that the Mathai-Rosenberg setup corresponds to our approach applied to \v{C}ech groupoids and the groups $(G,N)=(V,\Lambda)$.  More precisely, we have the following theorem.
%when $[\sigma]$ is the image of $H$ under the isomorphism $H^3(P;\Zb)\to H^2(P;\Uone)$, define $A(P;\sigma):=A(P;H)$ then
\begin{thm}\label{T:appendixThm} Let $Q\to X$ be a principal torus bundle trivialized over a good cover of $X$, let $\Gc$ denote the \v{C}ech groupoid for this cover and let $\rho:\Gc\to V/\Lambda$ be transition functions presenting $Q$.  Then \begin{enumerate}
\item For any $H\in H^3(Q;\Zb)$ such that $A:=A(Q;H)$ admits a $V$-action, there is a Morita equivalence
\[ A\stackrel{Morita}{\sim}C^*(V\rtimes \Lambda\rtimes^{\delta\rho}\Gc;\sigma), \]
for some $\sigma\in Z^2(V\rtimes\Lambda\rtimes^{\delta\rho}\Gc;\Uone)$ that is constant in $V$.  If $V$ acts by translation on $C^*(V\rtimes \Lambda\rtimes^{\delta\rho}\Gc;\sigma)$ then the equivalence is $V$-equivariant.
\item $[\sigma]$ is the image of $H$ under the composite map
\[ H^3(Q;\Zb)\mTo{\sim}H^2(Q;\Uone)\To H^2(V\rtimes\Lambda\rtimes^{\delta\rho}\Gc;\Uone).\]
\item Let $\sigma^\vee:=\sigma|_{\Lambda\rtimes^{\delta\rho}\Gc}\in Z^2(\Lambda\rtimes^{\delta\rho}\Gc;\Uone)$. Then for the chosen action of $V$, there is a $\widehat{V}$-equivariant Morita equivalence:
\[ V\ltimes A\stackrel{Morita}{\sim}V\ltimes C^*(V\rtimes\Lambda\rtimes^{\delta\rho}\Gc;\sigma)
             \stackrel{Morita}{\sim}C^*(\Lambda\rtimes^{\delta\rho}\Gc;\sigma^\vee) \]
where $\widehat{V}$ acts by the canonical dual action on the left two algebras and on the rightmost algebra by $\phi\cdot a(\lambda,\gamma):=\langle\phi,\lambda\rho(\gamma)\rangle a(\lambda,\gamma)$ for $\phi\in\widehat{V}$ and $(\lambda,\gamma)\in\Lambda\rtimes^{\delta\rho}\Gc.$
\end{enumerate}
\end{thm}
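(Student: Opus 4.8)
The plan is to specialize the machinery of Sections \eqref{S:cohoFacts}, \eqref{S:fullTduality} and \eqref{S:Brauer}, together with the appendix lemmas, to the case $(G,N)=(V,\Lambda)$ with $\Gc$ the \v{C}ech groupoid of a good cover. Statement (1) then amounts to identifying $A(Q;H)$ with a twisted groupoid algebra of exactly the shape produced by the nonabelian T-dualization procedure; statement (2) is the observation that the Dixmier-Douady class is carried along functorially by that identification; and statement (3) is the application of generalized Mackey-Rieffel imprimitivity, Theorem \eqref{p:GenRieffelImprim}, to the crossed product, keeping track of the dual action.

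For (1), first trace $H$ through the isomorphisms \eqref{E:cohoIsoms} to an $\Aut K$-valued \v{C}ech cocycle on the cover of $Q$, so that $A:=A(Q;H)=\Gamma_0(Q;E_Q)$ for a bundle $E_Q$ of compact operators. Because the cover is good, $\Gc_0$ (hence $\Gc_1$) is contractible to a set of points, and $\rho:\Gc\to V/\Lambda$ lifts through the covering $V\to V/\Lambda$ to a continuous map $\rho:\Gc\to V$; under such a lift Examples \eqref{Ex:bestgroupoid1} and \eqref{Ex:bestgroupoid2} provide isomorphisms $\Lambda\rtimes^{\delta\rho}\Gc\simeq V\times_{V/\Lambda,\rho}\Gc$ and $V\rtimes\Lambda\rtimes^{\delta\rho}\Gc\simeq V\rtimes(V\times_{V/\Lambda,\rho}\Gc)$. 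Since the quotients $q:(V/\Lambda\rtimes_\rho\Gc)\to(Q\arrows Q)$ and $\kappa:V\rtimes(V\times_{V/\Lambda,\rho}\Gc)\to V/\Lambda\rtimes_\rho\Gc$ are essential equivalences (Proposition \eqref{P:essentialEquivs}, parts (5) and (4)), pulling $E_Q$ back along them presents the $V$-equivariant bundle on $Q$ determined by the chosen $V$-action on $A$ as a $V$-equivariant bundle on $V\rtimes\Lambda\rtimes^{\delta\rho}\Gc$, equivalently a module over $V\ltimes(V\rtimes\Lambda\rtimes^{\delta\rho}\Gc)$. I would then carry out explicitly the argument sketched in Section \eqref{S:Brauer}: trivialize the restriction $E_0$ of this bundle to $\{e\}\times\Gc_0$ using contractibility of $\Gc_0$, trivialize the rest using the free $V$-action, read off the classifying homomorphism $\pi:V\ltimes(V\rtimes\Lambda\rtimes^{\delta\rho}\Gc)\to\Aut K$, and replace $\pi$ by the cohomologous homomorphism obtained by restricting to the subgroupoid $\iota(\Lambda\rtimes^{\delta\rho}\Gc)$ — which is exactly the part preserving $E_0$ — and re-extending along the quotient map; the result is constant in the two $V$-variables. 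Since $\Lambda$ is discrete, the connecting class $\sigma:=\delta\pi$ lies honestly in $Z^2(V\rtimes\Lambda\rtimes^{\delta\rho}\Gc;\Uone)$ and is constant in $V$. Finally Lemma \eqref{L:appendixLemma2} identifies $C^*(V\rtimes\Lambda\rtimes^{\delta\rho}\Gc;\sigma)$, up to Morita equivalence, with the section algebra of the bundle $E(\pi)$, and composing with the Morita equivalences attached to $q$ and $\kappa$ (Proposition \eqref{P:moritaFacts}(1)) gives $A\Morita C^*(V\rtimes\Lambda\rtimes^{\delta\rho}\Gc;\sigma)$; by construction the translation $V$-action on the right-hand algebra corresponds to the chosen action on $A$, so the equivalence is $V$-equivariant.

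For (2), since $\sigma=\delta\pi$ and $\pi$ was produced by pulling the $\Aut K$-cocycle of $E_Q$ back along $q$ and $\kappa$, naturality of the connecting map $H^1(-;\Aut K)\to H^2(-;\Uone)$ with respect to these pullbacks shows that $[\sigma]$ is precisely the image of the class of $H$ in $H^2(Q;\Uone)$ under $\kappa^*\circ q^*$, which is the composite map in the statement. For (3), form crossed products by $V$: the $V$-equivariant equivalence of (1) together with naturality of the dual-action construction gives a $\widehat V$-equivariant Morita equivalence $V\ltimes A\Morita V\ltimes C^*(V\rtimes\Lambda\rtimes^{\delta\rho}\Gc;\sigma)$ (Proposition \eqref{P:moritaFacts}(2)). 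Since $\sigma$ is $V$-invariant, translation is genuinely an action on the twisted algebra, and as in the Lemma following \eqref{eq:twococycle} one has $V\ltimes C^*(V\rtimes\Lambda\rtimes^{\delta\rho}\Gc;\sigma)=C^*(V\ltimes(V\rtimes\Lambda\rtimes^{\delta\rho}\Gc);\psi)$ with $\psi=F(\sigma,1,1)$ the image under \eqref{Eq:equicohoCrossedCoho}. The inclusion $\iota:\Lambda\rtimes^{\delta\rho}\Gc\hookrightarrow V\ltimes(V\rtimes\Lambda\rtimes^{\delta\rho}\Gc)$ is an essential equivalence by the results of Section \eqref{S:cohoFacts}, so Corollary \eqref{C:fundamental} and Proposition \eqref{P:moritaFacts}(1) give $C^*(V\ltimes(V\rtimes\Lambda\rtimes^{\delta\rho}\Gc);\psi)\Morita C^*(\Lambda\rtimes^{\delta\rho}\Gc;\iota^*\psi)$, where $\iota^*\psi$ equals the restriction $\sigma^\vee=\sigma|_{\Lambda\rtimes^{\delta\rho}\Gc}$ directly from the formula for $F$ and the $V$-independence of $\sigma$; Proposition \eqref{P:actionsforGenRiefImp} then identifies the transported $\widehat V$-action with $\phi\cdot a(\lambda,\gamma)=\langle\phi,\lambda\rho(\gamma)\rangle a(\lambda,\gamma)$.

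I expect the main obstacle to be part (1): giving the explicit construction of the classifying homomorphism $\pi$ and verifying that it can be normalized to be constant in the $V$-directions without leaving its outer-equivalence class — that is, actually performing the computation that Section \eqref{S:Brauer} only indicates — and then checking that the chain of Morita equivalences assembled from $q$, $\kappa$, $\iota$ and Lemma \eqref{L:appendixLemma2} is compatible with the translation $V$-action, so that the final equivalence is genuinely $V$-equivariant. Once (1) is in place, (2) and (3) follow formally from naturality of the connecting map, the cohomological comparison results of Section \eqref{S:cohoFacts}, and Theorem \eqref{p:GenRieffelImprim} with Proposition \eqref{P:actionsforGenRiefImp}.
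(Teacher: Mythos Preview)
Your proposal is correct and follows essentially the same route as the paper's proof. The only organizational difference is that you pull the bundle back all the way to $V\rtimes\Lambda\rtimes^{\delta\rho}\Gc$ via $q$ and $\kappa$ and then run the Section~\eqref{S:Brauer} argument on $V\ltimes(V\rtimes\Lambda\rtimes^{\delta\rho}\Gc)$, whereas the paper pulls back only to $V/\Lambda\rtimes_\rho\Gc$, restricts to the fiber $E_0$ over $\{e\Lambda\}\times\Gc_0$, and reads off $\pi$ directly from the action of $\iota(\Lambda\rtimes^{\delta\rho}\Gc)\subset V\ltimes(V/\Lambda\rtimes_\rho\Gc)$ on $E_0$; the two pictures are identified by the essential equivalence $\kappa$, and the remaining steps (lifting $\pi$ to $T\in U(\hf)$, setting $\sigma=\delta T$, invoking Lemma~\eqref{L:appendixLemma2}, and then passing to crossed products via the essential equivalence $\iota$ for part (3)) are the same in both.
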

The theorem will follow from the next two lemmas concerning bundles of $C^*$-algebras.
\begin{defn} Let $\Gc$ be a groupoid.  A (left)\textbf{ $\Gc$-$C^*$-algebra $\Ac$} is a bundle of $C^*$-algebras $\Ac\to\Gc_0$ which is a (left) $\Gc$-module, such that $\Gc$ acts by $C^*$-algebra isomorphisms.  The \textbf{groupoid algebra of sections of $\Ac$}, written $\Gamma^*(\Gc;r^*\Ac)$, is the $C^*$-completion of $\Gamma_c(\Gc;r^*\Ac)$ (= the compactly supported sections of the pullback bundle $r^*\Ac$) with multiplication and involution given by
\[ ab(g):=\int_{g_1g_2=g}a(g_1)g_1\cdot(b(g_2))\qquad a^*(g):=g\cdot(a(g^{-1})^*) \]
for $g's\in\Gc$ and $a,b\in\Gamma_c(\Gc;r^*\Ac)$, and where the last ``$\ ^*\ $'' on the right is the $C^*$-algebra involution in the fiber.
\end{defn}
\begin{rem} This definition is a synonym for the groupoid crossed product algebra $\Gc\ltimes\Gamma_0(\Gc_0;\Ac)$, as defined in \cite{Ren2}.  \end{rem}
\begin{lem}\label{L:appendixLemma1} Let $\Gc$ and $\Hc$ be groupoids and $(\Gc_0\negthickspace \stackrel{\ell}{\leftarrow}P\mto{\rho}\Hc_0)$ a $(\Gc$-$\Hc)$-Morita equivalence bimodule.  Then for any $\Hc$-$C^*$-algebra $\Ac\mto{\pi}\Hc_0$, there is a Morita equivalence
\[ \Gamma^*(\Gc;r^*(P*\Ac))\stackrel{Morita}{\sim}\Gamma^*(\Hc;r^*\Ac)\]
where, as in Section \eqref{S:groupoidmoritaeq},
\[ P*\Ac:=(P\times_{\rho,\Hc_0,\pi}\Ac)/\Hc)=((P\times_{\rho,\Hc_0,\pi}\Ac)/(p,a)\sim(ph^{-1},h\cdot a)).\]
\end{lem}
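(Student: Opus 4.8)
The plan is to mimic, with the coefficient bundle $\Ac$ carried along at every step, the Muhly--Renault--Williams construction \cite{MRW} of a $C^*(\Gc)$--$C^*(\Hc)$ imprimitivity bimodule out of $C_c(P)$; indeed the statement is nothing but the untwisted case of the equivalence theorem for groupoid crossed products, and I would give the direct construction because the bimodule is used explicitly in the sequel. First I would record the structural facts. Since $\Hc$ has a Haar system, the right $\Hc$-action on $P$ is principal and the left moment map $\ell\colon P\to\Gc_0$ is the (open) quotient map for $P/\Hc\cong\Gc_0$; hence $P*\Ac\to\Gc_0$ is again an (upper semicontinuous) bundle of $C^*$-algebras, it carries the $\Gc$-$C^*$-algebra structure $\gamma\cdot[p,a]:=[\gamma p,a]$, and for each $p$ one has $(P*\Ac)_{\ell(p)}\cong\Ac_{\rho(p)}$ via $a\mapsto[p,a]$. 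Symmetrically the left $\Gc$-action on $P$ is principal with quotient map $\rho\colon P\to\Gc\backslash P\cong\Hc_0$; this two-sided symmetry is what will eventually furnish fullness on both sides.

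Next I would equip $X_0:=\Gamma_c(P;\rho^*\Ac)$, the compactly supported continuous sections of the pullback of $\Ac$ along $\rho$, with the four structure maps obtained from the MRW formulas by inserting the $\Hc$-action on $\Ac$ (and the induced $\Gc$-action on $P*\Ac$) wherever a translated section is to be paired with an untranslated one: a left $\Gamma_c(\Gc;r^*(P*\Ac))$-action and a right $\Gamma_c(\Hc;r^*\Ac)$-action given by the usual convolution-type integrals against the two Haar systems, a $\Gamma_c(\Gc;r^*(P*\Ac))$-valued inner product whose value at $\gamma$ is, under the identification above, $\int h\cdot\bigl(\xi(ph)\,\eta(\gamma^{-1}ph)^*\bigr)$ integrated over the right $\Hc$-action on a point $p$ with $\ell(p)=r\gamma$, and a $\Gamma_c(\Hc;r^*\Ac)$-valued inner product defined symmetrically by integrating over $\Gc$-orbits. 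I would then run through the imprimitivity-bimodule axioms exactly as in \cite{MRW}: (i) well-definedness --- continuity, compact support, and independence of the auxiliary choices of the various integrands, using openness of source and range (guaranteed by the Haar systems) and properness of the two actions; (ii) that the two actions commute, that each inner product is module-linear and symmetric, and that the compatibility identity ${}_{\Gamma^*(\Gc;r^*(P*\Ac))}\langle\xi,\eta\rangle\cdot\zeta=\xi\cdot\langle\eta,\zeta\rangle_{\Gamma^*(\Hc;r^*\Ac)}$ holds --- each of which reduces, by Fubini over the two Haar systems and left-invariance of each, to the plain (untwisted) action axiom for $\Ac$; (iii) positivity of the two inner products; (iv) density of the spans of their ranges. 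Completing $X_0$ in the norm coming from either inner product then produces the Morita equivalence.

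The only steps that are not pure bookkeeping are the positivity in (iii) and the fullness in (iv). For positivity I would follow the argument of \cite{MRW}, realizing $\langle\xi,\xi\rangle$ as a positive element by a limiting argument, but now invoking the fiberwise $C^*$-positivity of $\Ac$ at the step where \cite{MRW} uses positivity in $\Cb$; this is the one place where the bundle structure, rather than MRW alone, does genuine work. For fullness I would use freeness and properness of the $\Gc$- and $\Hc$-actions on $P$ (so that the orbit spaces are exactly $\Hc_0$ and $\Gc_0$) together with partitions of unity on $P$ and on $\Gc_1$ (resp.\ $\Hc_1$) to approximate an arbitrary element of $\Gamma_c(\Gc;r^*(P*\Ac))$ (resp.\ $\Gamma_c(\Hc;r^*\Ac)$) by sums of inner products. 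Everything else is keeping careful track of where the $\Ac$-action is applied; as a sanity check, when $\Ac=\Hc_0\times A$ is the constant bundle with trivial $\Hc$-action one gets $P*\Ac=\Gc_0\times A$, so $\Gamma^*(\Gc;r^*(P*\Ac))=C^*(\Gc)\otimes A$ and $\Gamma^*(\Hc;r^*\Ac)=C^*(\Hc)\otimes A$, and the construction specializes to the MRW bimodule tensored with $A$.
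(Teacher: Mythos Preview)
Your proposal is correct and follows the standard line (carrying the coefficient bundle through the Muhly--Renault--Williams construction), but the paper takes a slightly different route worth noting. Rather than building the bimodule directly for an arbitrary $(\Gc$-$\Hc)$-equivalence $P$, the paper first reduces to the case of an essential equivalence: any Morita equivalence factors as $\Gc\stackrel{\phi_1}{\leftarrow}\Kc\stackrel{\phi_2}{\to}\Hc$ with $\phi_1,\phi_2$ essential, and if the lemma holds for each $P_{\phi_i}$ then composing gives the general case. For an essential equivalence $\phi:\Gc\to\Hc$ the bimodule $P_\phi=\Gc_0\times_{\phi,\Hc_0,r}\Hc_1$ has an explicit description, the left moment map has a canonical section, and $P_\phi*\Ac\simeq\phi^*\Ac$; the paper then writes down the pre-imprimitivity structure on $X_c=\Gamma_c(P_\phi;\ell^*\phi^*\Ac)$ and declares the verification routine. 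What this buys is that the positivity and fullness arguments you flag as the only genuinely non-bookkeeping steps become somewhat lighter in the essential-equivalence setting (one side is almost a pullback), at the cost of an extra factorization step. Your direct construction is cleaner conceptually and is what one actually wants when the bimodule is used downstream, but you are doing slightly more work than the paper does.
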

\begin{proof}  We will construct a Morita equivalence bimodule for essential equivalences.  If this case is true then the lemma is true because any Morita equivalence factors into essential equivalences, and if $\Gc\stackrel{\phi_1}{\leftarrow}\Kc\mto{\phi_2}\Hc$ is such a factorization then, setting $P=P_{\phi_1}^{op}*P_{\phi_2}$, we will have
\[ \Gamma^*(\Hc;r^*\Ac)\Morita\Gamma^*(\Kc;r^*(P_{\phi_2}^*\Ac))\stackrel{iso}{\sim}
    \Gamma^*(\Kc;r^*P_{\phi_1}*(P*\Ac))\Morita\Gamma^*(\Gc;r^*(P*\Ac)).  \]

So in the notation of the statement of the lemma, assume
\[ P=P_\phi:=\Gc_0\times_{\phi,\Hc_0,r}\Hc_1 \]
for $\phi:\Gc\to\Hc$ an essential equivalence.  Then $\ell$ is the projection $P_\phi\to\Gc_0$ and $\rho$ is the projection $\pi^\Hc:P_\phi\to\Hc_1$ followed by the source map $s:\Hc_1\to\Hc_0$.
We use the isomorphism $P_\phi*\Ac\simeq\phi^*\Ac:=\Gc_0\times_{\phi,\Hc_0,\pi}\Ac$.

Now we will construct a Morita equivalence bimodule.  Set
\[ A_c:=\Gamma_c(\Gc;r^*(\phi*A)) \text{ and } B_c:=\Gamma_c(\Hc;r^*\Ac).\]
The following structure defines an $A_c$-$B_c$-pre-Morita equivalence bimodule structure on $X_c:=\Gamma_c(P;\ell^*\phi^*\Ac)$, which after completion produces the desired Morita equivalence.  Fix the notation:
\[ g's\in \Gc,\ h's\in\Hc,\ p's\in P,\ a's\in A_c,\ x's\in X_c,\ \text{ and }\ b's\in B_c,\]
and as usual integration is with respect to the fixed Haar system.

The left pre-Hilbert module structure is given by the following data:
\begin{itemize}
\item Action: $ax(p):=\int_g a(g)\phi(g)\cdot(x(g^{-1}p)).$
\item Inner product: $_A\langle x_1,x_2\rangle(g):=\int_px_1(gp)\phi(g)\cdot x_2(p)^*$.
\end{itemize}
The right pre-Hilbert module structure is given by the following data:
\begin{itemize}
\item Action: $x b(p):=\int_h h\cdot(x(ph))\pi^\Hc(ph)\cdot b(h^{-1}).$
\item Inner product: $\langle x_1,x_2\rangle_B(h):=\int_p \pi^\Hc(p)^{-1}\cdot(x_1(p)^*x_2(ph))$.
\end{itemize}
Verification that this determines a Morita equivalence bimodule is routine.
%That these definitions give a pre-Hilbert bimodule when $\Ac$ is the constant trivial $\Hc$-$C^*$-algebra $(\Cb\times\Hc_0)\to\Hc_0$ is the content of \cite{MRW}.  The same analysis carries through to the general case.
\end{proof}
\begin{lem}\label{L:appendixLemma2} Let $\Gc$ be a groupoid, $\sigma:\Gc_2\to\Uone$ a 2-cocycle, and $T:\Gc_1\to U(\hf)$ a continuous map satisfying $\sigma(\gamma_1,\gamma_2)=T(\gamma_1)T(\gamma_2)T(\gamma_1\gamma_2)^{-1}=:\delta T(\gamma_1,\gamma_2).$
Let $\Ac(\ad T)$ denote the $\Gc$-$C^*$-algebra $(\Gc_0\times K)\to\Gc_0$ with $\Gc$-action given by:
\[ s^*\Ac(\ad T)\To r^*\Ac(\ad T)\qquad g\cdot(sg,k):=(rg,\ad T(g)k).\]
Then
\begin{enumerate}
\item There is a Morita equivalence
\[ \Gamma^*(\Gc;r^*\Ac(\ad T))\stackrel{Morita}{\sim}C^*(\Gc;\sigma). \]
\item When $\Gc=\check{\Gc}$ is a \v{C}ech groupoid on a good cover of a space $X$, every $\check{\Gc}$-$C^*$-algebra of compact operators is isomorphic to $\Ac(\ad T)$ for some $T$ and
\[\Gamma_0(X;E(\ad T))\stackrel{Morita}{\sim}C^*(\check{\Gc};\sigma) \]
where $E(\ad T):=\Gc_0\times K/(s\gamma,k)\sim(r\gamma,ad T(\gamma)k)$ is the bundle of compact operators whose transition functions are $\ad T:\Gc\to\Aut K$.
\end{enumerate}
\end{lem}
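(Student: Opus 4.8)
The plan is to deduce statement (1) from an algebra \emph{isomorphism} followed by stabilization, and then to obtain statement (2) by combining (1) with Lemma \ref{L:appendixLemma1} and a trivialization over the good cover.

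For (1), the point to exploit is that, since $\sigma=\delta T$ takes values in the centre $\Uone\subset U(\hf)$, the assignment $\gamma\mapsto\ad T(\gamma)$ really is a groupoid homomorphism $\Gc\to\Aut K$: from $T(\gamma_1)T(\gamma_2)=\sigma(\gamma_1,\gamma_2)^{\pm1}T(\gamma_1\gamma_2)$ and centrality of scalars one gets $\ad T(\gamma_1)\circ\ad T(\gamma_2)=\ad T(\gamma_1\gamma_2)$, so $\Ac(\ad T)$ is a bona fide $\Gc$-$C^*$-algebra and $\Gamma^*(\Gc;r^*\Ac(\ad T))$ is the groupoid crossed product $\Gc\ltimes_{\ad T}C_0(\Gc_0,K)$. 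I would then ``untwist by $T$'': define
\[ \Phi\colon \Gamma^*(\Gc;r^*\Ac(\ad T))\longrightarrow C^*(\Gc;\sigma)\otimes K,\qquad \Phi(a)(\gamma):=a(\gamma)T(\gamma), \]
where the target is realized as the $\sigma$-twisted convolution algebra of compactly supported $K$-valued functions on $\Gc_1$. A direct computation — using precisely the identity $T(\gamma_1)T(\gamma_2)=\sigma(\gamma_1,\gamma_2)^{\pm1}T(\gamma_1\gamma_2)$ to absorb the $\ad T$-twist into the scalar $\sigma$ — shows $\Phi$ carries the product and involution of the left-hand algebra to those of the right-hand one; being a bijection of the underlying $C_c$-algebras it extends to a $C^*$-isomorphism. (If the orientation conventions produce $\sigma^{-1}$ in place of $\sigma$, replace $T$ by $\gamma\mapsto T(\gamma^{-1})^{*}$.) Finally $C^*(\Gc;\sigma)\otimes K\Morita C^*(\Gc;\sigma)$ by stabilization, giving (1) for an arbitrary $\Gc$.

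For (2), over a good cover every $U_i$ is contractible, so a bundle of compacts on $\Gc_0=\coprod_iU_i$ is trivial on each piece; hence any $\check\Gc$-$C^*$-algebra of compacts is $\Gc_0\times K$ as a bundle, with module structure encoded by a $PU(\hf)=\Aut K$-valued \v Cech $1$-cocycle $(g_{ij})$. Since each $U_{ij}$ is contractible and $U(\hf)$ is contractible (Kuiper), the principal $\Uone$-bundle $U(\hf)\to PU(\hf)$ admits a lift $T_{ij}\colon U_{ij}\to U(\hf)$ of each $g_{ij}$; assembling these into $T\colon\check\Gc_1\to U(\hf)$ gives $\ad T=(g_{ij})$, so the given algebra is $\Ac(\ad T)$, and $\sigma:=\delta T$ is automatically $\Uone$-valued because $\ad(\delta T)=\ad T(\gamma_1)\ad T(\gamma_2)\ad T(\gamma_1\gamma_2)^{-1}=\mathrm{id}$ (it represents the Dixmier--Douady class). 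This proves the first assertion of (2). For the Morita equivalence, recall that the gluing bimodule $P:=\coprod_iU_i$ implements $(X\arrows X)\sim\check\Gc$ (Example \ref{ex:cechgroupoid}); applying Lemma \ref{L:appendixLemma1} to this bimodule and $\Ac=\Ac(\ad T)$ yields $\Gamma^*((X\arrows X);r^*(P*\Ac(\ad T)))\Morita\Gamma^*(\check\Gc;r^*\Ac(\ad T))$. One checks directly that $P*\Ac(\ad T)$ is exactly the descended bundle $E(\ad T)$ — its fibre over $x$ is the copies of $K$ indexed by the charts through $x$, glued by $\ad T_{ij}(x)$ — and that $\Gamma^*((X\arrows X);r^*E(\ad T))=\Gamma_0(X;E(\ad T))$. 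Composing with the isomorphism of part (1),
\[ \Gamma_0(X;E(\ad T))\Morita\Gamma^*(\check\Gc;r^*\Ac(\ad T))\Morita C^*(\check\Gc;\sigma), \]
which is the claim.

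The only genuine work is in part (1): checking that $\Phi$ is multiplicative and $*$-preserving, i.e. pushing the $\ad T$-conjugations past one another and watching the coboundary $\delta T$ reorganize into the scalar twist $\sigma$. Everything else is bookkeeping — the identification $P*\Ac(\ad T)\isom E(\ad T)$ and the reduction $\Gamma^*((X\arrows X);-)=\Gamma_0(X;-)$ are routine, and the trivialization of the bundle of compacts over each $U_i$ is standard Dixmier--Douady theory. I expect no conceptual obstacle; the content of the lemma is just that a bundle of compacts over a groupoid, with ``curving'' $T$, has section algebra Morita equivalent to the scalar $\delta T$-twisted groupoid algebra.
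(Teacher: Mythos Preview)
Your argument for part (2) matches the paper's exactly: apply Lemma \ref{L:appendixLemma1} to the essential equivalence $\check\Gc\to X$ to identify $\Gamma_0(X;E(\ad T))$ with $\Gamma^*(\check\Gc;r^*\Ac(\ad T))$, then invoke part (1); the existence of $T$ comes from the connecting isomorphism $H^1(X;\Aut K)\to H^2(X;\Uone)$.

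For part (1) your route is genuinely different. The paper does \emph{not} pass through an isomorphism with $C^*(\Gc;\sigma)\otimes K$; instead it writes down an explicit imprimitivity bimodule $X_c=C_c(\Gc_1;\hf)$, with left $\Gamma^*(\Gc;r^*\Ac(\ad T))$-action $ax(g)=\int\sigma(g_1,g_2)a(g_1)T(g_1)x(g_2)$ and right $C^*(\Gc;\sigma)$-action $xb(g)=\int\sigma(g_1,g_2)x(g_1)b(g_2)$, together with the associated inner products, and declares the verification routine. Your approach is essentially the Packer--Raeburn stabilisation trick: untwist by $T$ to an honest isomorphism with the stabilised twisted algebra, then use $A\otimes K\Morita A$. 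The two are closely related, since the paper's bimodule is visibly $C^*(\Gc;\sigma)\otimes\hf$ as a right module and its left action is exactly ``act through $\Phi$''; you have made that implicit isomorphism explicit. Your version has the advantage of yielding a statement stronger than Morita equivalence (an actual $*$-isomorphism after tensoring with $K$), while the paper's direct bimodule avoids invoking stabilisation as a black box.

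One caution: with the paper's convention $\sigma=T(\gamma_1)T(\gamma_2)T(\gamma_1\gamma_2)^{-1}$, a straight computation of $\Phi(ab)$ versus $\Phi(a)*_\sigma\Phi(b)$ produces $\sigma^{-1}$ rather than $\sigma$, and your proposed fix $T\mapsto T(\gamma^{-1})^*$ does not literally repair this (it gives $\delta S(\gamma_1,\gamma_2)=\sigma(\gamma_2^{-1},\gamma_1^{-1})^{-1}$, which is cohomologous but not equal to $\sigma$). This is a convention issue, not a conceptual one, and can be absorbed either by adjusting which side $T$ multiplies on or by noting that $\Ac(\ad T)$ depends only on $\ad T$ while the coboundary of any lift is cohomologous to $\sigma$; but you should pin down the correct normalisation rather than leave it as a parenthetical.
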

\begin{proof}
The Morita equivalence in the second statement is proved as follows.  First note that $E(\ad T)=(P_\phi^{op})*\Ac(\ad T)$ where $P_\phi$ is the bimodule of the essential equivalence $\Gc\mTo{\phi}\Gc_0/\Gc\equiv X$, so an application of Lemma \eqref{L:appendixLemma1} implies that
\[ \Gamma_0(X;E(\ad T))\stackrel{Morita}{\sim}\Gamma^*(\check{\Gc};r^*\Ac(\ad T)). \]
Now apply the Morita equivalence of the first statement to finish.  The other part of statement (2), that all bundles of compact operators on $X$ are of the form $E(\ad T)$ for some $T$, follows because the connecting homomorphism in nonabelian \v{C}ech cohomology, $H^1(X;Aut K)\to H^2(X;\Uone)$, is an isomorphism.

It remains to exhibit the Morita equivalence of statement (1).
Set
\[ A_c:=\Gamma_c(\Gc;r^*\Ac(\ad T)) \text{ and } B_c:=C^*(\Gc;\sigma).\]
We claim that the space $X_c:=C_c(\Gc_1;\hf)$ of compactly supported $\hf$-valued maps admits a pre-Morita equivalence $\Gamma^*(\Gc;r^*\Ac(\ad T))$-$C^*(\Gc;\sigma)$-bimodule structure.  Set the notational conventions:
\[ g's\in \Gc,\ a's\in A_c,\ x's\in X_c,\ \text{ and }\ b's\in B_c.\]
The unadorned bracket $\langle\ ,\ \rangle$ denotes the $\Cb$-valued inner product on $\hf$ and is taken to be conjugate-linear in the first variable and linear in the second. The bra-ket notation will be used for the $K$-valued inner product on $\hf$, so for $v's\in\hf$, $|v_1\rangle\langle v_2|$ is the compact operator defined by $|v_1\rangle\langle v_2|(v):=\langle v_2,v\rangle v_1.$

The left pre-Hilbert module structure on $X_c$ is given by the following data:
\begin{itemize}
\item Action: $ax(g):=\int_{g_1g_2=g} \sigma(g_1,g_2)a(g_1)T(g_1)x(g_2)$.
\item Inner product: $_A\langle x_1,x_2\rangle(g):=\int_{g_1g_2=g}\sigma(g_1,g_2)|x_1(g_1)\rangle\langle T(g)x_2(g_2^{-1})|$.
\end{itemize}
The right pre-Hilbert module structure on $X_c$ is given by the following data:
\begin{itemize}
\item Action: $x b(g):=\int_{g_1g_2=g}\sigma(g_1,g_2) x(g_1)b(g_2).$
\item Inner product: $\langle x_1,x_2\rangle_B(g):=\int_{g_1g_2=g}\langle x_1(g_1^{-1}),x_2(g_2)\rangle\sigma(g_1,g_2)$.
\end{itemize}
Verification that this structure gives a Morita equivalence is routine.
\end{proof}

Now let us proceed to the proof of the theorem.
\begin{proof}( Theorem )  By the Dixmier-Douady classification we know that $A$ is isomorphic to the algebra of continuous sections of a bundle $E\to Q$ of compact operators, so assume $A=\Gamma_0(Q;E)$.  A $V$-action on $A$ comes from an action by automorphisms of the bundle of algebras.
Now, noting that $Q$ is isomorphic to the quotient $(V/\Lambda\rtimes_\rho\Gc)_0/(V/\Lambda\rtimes_\rho\Gc)_1$, pull back $E$ to a bundle $\tilde{E}$ over $V/\Lambda\times\Gc_0$.  Then $\tilde{E}$ is a module for the groupoid $V\ltimes V/\Lambda\rtimes_\rho\Gc$.  Denote by $E_0$ the restriction of $\tilde{E}$ to $\{e\Lambda\}\times\Gc_0\subset V/\Lambda\times\Gc_0$.  Then $E_0$ is trivializable since $\Gc_0$ is contractible, so we assume $E_0=\Gc_0\times K$, and write $(s\gamma,k)$ for a point in $E_0\subset\tilde{E}$, and $[s\gamma,k]$ for its image under the quotient $q:\tilde{E}\to E$.  The $V\times V/\Lambda\rtimes^{\delta\rho}\Gc$-module structure on $\tilde{E}$ ``restricts'' to a $\Lambda\rtimes^{\delta\rho}\Gc$-module structure on $E_0$, via the inclusion
\[  \iota:\Lambda\rtimes^{\delta\rho}\Gc\hookrightarrow V\ltimes V/\Lambda\rtimes_\rho\Gc\qquad
   (\lambda,\gamma)\mapsto(\lambda\rho(\gamma),e\Lambda,\gamma).   \]
The action on $E_0$ can be written as
\[(\lambda,\gamma)\cdot(s\gamma,k):=(r\gamma,\pi(\lambda,\gamma)k)\]
where $\pi$ is a homomorphism $\Lambda\rtimes^{\delta\rho}\Gc\to \Aut K$.  (Note that there exists a lift of $\rho$ to $V$ since $\Gc_0$ is contractible and $V\to V/\Lambda$ is a covering space, so $\delta\rho$ makes sense.)

Then $q$ followed by the $V$ action determines a map
\[ V\times E_0\to E\qquad (v,(s\gamma,k))\mapsto (v,[s\gamma,k])\mapsto v\cdot[s\gamma,k]. \]
This map factors through the quotient
\[ V\times E_0\longrightarrow (V\times E_0)/(\Lambda\rtimes^{\delta\rho}\Gc):=(V\times E_0)/(v,(s\gamma,k))\sim(v-\lambda-\rho(\gamma),(r\gamma,\pi(\lambda,\gamma)k)) \]
and induces an isomorphism of bundles
\[ (V\times E_0)/(\Lambda\rtimes^{\delta\rho}\Gc)\ \stackrel{\sim}{\To}\ E \]
which is $V$-equivariant when the bundle on the left is equipped with the natural translation action.

So $A(Q;E)$ is equivariantly isomorphic to $\Gamma(Q;(V\times E_0)/(V\rtimes\Lambda\rtimes^{\delta\rho}\Gc))$.

Let $\sigma:=\delta\pi:(\Lambda\rtimes^{\delta\rho}\Gc)_2\to\Uone$ be an image of $\pi$ under the composition of the connecting homomorphism (which is an isomorphism due to the contractibility of $U(\hf)$) and the pullback via the quotient map $V\rtimes\Lambda\rtimes^{\delta\rho}\Gc\to\Lambda\rtimes^{\delta\rho}\Gc$
\[ H^1(\Lambda\rtimes^{\delta\rho}\Gc;\Aut K)\to H^2(\Lambda\rtimes^{\delta\rho}\Gc;\Uone)\to
      H^2(V\rtimes\Lambda\rtimes^{\delta\rho}\Gc;\Uone).  \]
In other words, we have chosen a continuous map $T:(\Lambda\rtimes^{\delta\rho}\Gc)\to U(\hf)$ such that $\ad T=\pi$ and $\delta T=\sigma$, and $E\simeq E(\ad T)$.  Now we know that $A(Q;H)\simeq\Gamma_0(Q;E)=\Gamma_0(Q;\ad T))$, and according to Lemma \eqref{L:appendixLemma2} there is a Morita equivalence:
\[ C^*(V\rtimes\Lambda\rtimes^{\delta\rho}\Gc;\sigma)\Morita\Gamma_0(Q;E(\ad T)) \]
which is easily seen to be equivariant since $T$ does not depend on $V$.  So statement (1) is proved, and statement (2) is obvious from the construction since $\Gamma_0(Q,E(\ad T))\simeq A(Q,H)$ when $H$ is the image of $[\sigma]=[\delta T]$.

Statement (3) now follows as well.  Indeed, since an equivariant Morita equivalence induces an equivalence of the associated crossed product algebras, we have
\[ V\ltimes C^*(V\rtimes\Lambda\rtimes^{\delta\rho}\Gc;\sigma)\stackrel{Morita}{\sim}V\ltimes A(Q;H) \]
and the algebra on the left, being identical to $C^*(V\ltimes V\rtimes\Lambda\rtimes^{\delta\rho}\Gc;\sigma)$, is equivariantly Morita equivalent to $C^*(\Lambda\rtimes^{\delta\rho}\Gc;\sigma)$ by Proposition \eqref{P:essentialEquivs}.  This completes the proof.
\end{proof}

So that is the correspondence:  Mathai-Rosenberg do $A(Q;H)\leftrightarrow V\ltimes A(Q;H)$, whereas we do $(V\rtimes\Lambda\rtimes^{\delta\rho}\Gc;\sigma)\leftrightarrow (\Lambda\rtimes^{\delta\rho}\Gc;\sigma^\vee)$.  In Sections \eqref{S:classicalT-duality} and \eqref{S:fullTduality} the presentation is slightly different.  The difference is that in this appendix we have assumed that $\sigma$ is already given as a 2-cocyle on  $\Hc:=(V\rtimes\Lambda\rtimes^{\delta\rho}\Gc)$ which is constant in the $V$-direction, whereas in Section \eqref{S:fullTduality} we begin with an arbitrary $\sigma$ on $\Hc$ that has been extended to an equivariant 2-cocyle $(\sigma,\lambda,\beta)$.  The two setups are essentially the same because according to Section \eqref{S:cohoFacts}, the existence of the lift $(\sigma,\lambda,\beta)$ ensures that $\sigma$ is cohomologous to a 2-cocycle which is constant in the $V$ direction.  In Section \eqref{S:classicalT-duality} there is the further difference that $\sigma$ is presented on $V/\Lambda\rtimes_\rho\Gc$ rather than $\Hc$, but we can easily pull it back to a cocycle on $\Hc$.
The slightly messier presentation in Sections \eqref{S:classicalT-duality} and \eqref{S:fullTduality} appeals to the notion that the initial data is a gerbe on a principal bundle (with any groupoid presentation), and that we have found an action of $V$ (that is a lift to an equivariant cocycle) for which the gerbe is equivariant.

The Mackey obstruction in our setup is simply $\beta:=\sigma|_{\Lambda\rtimes^{\delta\rho}\Gc_0}$.  The methods developed in Section \eqref{S:cohoFacts} make it clear that since $\Gc$ is a \v{C}ech groupoid, the restriction to a point in the base space, $\Gc\rightsquigarrow \Gc|_m$ identifies $\beta$ with a 2-cocycle on $\Lambda$.  When $\beta$ is a coboundary, we may assume that $\sigma$ only depends on one copy of $\Lambda$, and the Pontryagin duality methods apply.  Indeed, we may always assume that $\sigma$ is in the image of equivariant cohomology, $H_V^2(V\rtimes\Lambda\rtimes^{\delta\rho}\Gc;\Uone)$, and then $\beta$ really corresponds to the component that obstructs classical dualization and Pontryagin dualization described in Section \eqref{S:classicalT-duality}.

%-------------------------------------------------------------------

\end{document}